\newcommand{\calB}{\mathcal{B}}
\newcommand{\calF}{\mathcal{F}}
\newcommand{\calG}{\mathcal{G}}
\newcommand{\calN}{\mathcal{N}}
\newcommand{\calK}{\mathcal{K}}
\newcommand{\calX}{\mathcal{X}}
\newcommand{\given}{\mid}
\newcommand{\p}{{\mathbb{P}}}
\newcommand{\Z}{{\mathbb{Z}}}
\numberwithin{equation}{section}
\newcommand{\R}{\mathbb{R}}
\newcommand{\N}{\mathbb{N}}
\newcommand{\E}{\mathbb{E}}
\newcommand{\dd}{\mathop{}\!\mathrm{d}}
\newcommand{\F}{{\mathcal{F}}}
\DeclareMathOperator{\cov}{\mathrm{cov}}
\DeclarePairedDelimiter{\abs}{\lvert}{\rvert}
\DeclarePairedDelimiter{\accol}{\{}{\}}
\DeclarePairedDelimiter{\pare}{(}{)}
\DeclarePairedDelimiter{\bracke}{[}{]}
\DeclarePairedDelimiter{\norm}{\lVert}{\rVert}
\newcommand{\st}{\,:\,}
\newcommand{\pn}{t}
\newcommand{\qn}{u}
\newcommand{\kn}{n_t}
\newcommand{\nnu}{n_u}
\newcommand{\yn}{y}
\theoremstyle{plain}
\newtheorem{theorem}{Theorem}[section]
\newtheorem{lemma}[theorem]{Lemma}
\newtheorem{prop}[theorem]{Proposition}
\newtheorem{lma}[theorem]{Lemma}
\newtheorem{Definition}[theorem]{Definition}
\newtheorem{Corollary}[theorem]{Corollary}
\newtheorem{proposition}[theorem]{Proposition}
\theoremstyle{definition}
\newtheorem{remark}[theorem]{Remark}
\def\Hun{$\mathbf{H}_1(p)$\xspace}
\def\Hdeux{$\mathbf{H}_2$\xspace}
\def\H{$\mathbf{H}(p)$\xspace}
\def\un{\mathbf{1}}
\title[Moderate deviations for  slowly mixing Markov chains]{Large and moderate deviations for bounded functions of slowly mixing Markov chains}
\date{\today}
\author{J. Dedecker}
\author{S. Gouëzel}
\author{F. Merlevède}
\address{Université Paris Descartes, Sorbonne Paris Cité, Laboratoire MAP5
and CNRS UMR 8145,\newline 45 rue des Saints Pères, \newline 75270
Paris Cedex 06, France.} \email{jerome.dedecker@parisdescartes.fr}
\address{Université de Nantes,
Laboratoire Jean Leray, CNRS UMR 6629, \newline 2 rue de la Houssinière,\newline 44322 Nantes,  France.} \email{sebastien.gouezel@univ-nantes.fr}
\urladdr{\url{http://www.math.sciences.univ-nantes.fr/~gouezel/}}
\address{Université Paris-Est, LAMA
and CNRS UMR 8050, \newline Cité Descartes - 5 boulevard Descartes, \newline
Champs-sur-Marne \newline
77454 Marne-la-Valée Cedex 2} \email{Florence.Merlevede@u-pem.fr}
\urladdr{\url{http://perso-math.univ-mlv.fr/users/merlevede.florence/}}
\begin{document}

\begin{abstract}
We consider Markov chains which are polynomially mixing, in a weak
sense expressed in terms of the space of functions on which the mixing
speed is controlled. In this context, we prove polynomial large and moderate 
deviations inequalities. These inequalities can be applied in various natural
situations coming from probability theory or dynamical systems. Finally, we
discuss examples from these various settings showing that our inequalities
are sharp.
\end{abstract}

\maketitle

\section{Introduction and results}

For stationary $\alpha$-mixing sequences in the sense of Rosenblatt
(see~\cite{rosenblatt_mixing}) a Fuk-Nagaev type inequality has been proved
by Rio (see Theorem 6.2 in~\cite{rio_book}). This deviation inequality is
very powerful and gives for instance sharp upper bounds for the deviation of
partial sums
 when the strong mixing coefficients decrease at a polynomial rate.  In particular for a bounded observable  $f$ of a strictly stationary
 Markov chain $(Y_{i})_{i \in \Z}$ with strong mixing coefficients of order $O(n^{1-p})$ for $p \geq 2$, Rio's inequality gives:
 for any
$x>0$ and any $r \geq 1$,
\begin{equation} \label{rio-FNine}
\p\pare[\Big]{\max_{1 \leq k \leq n} \abs[\Big]{\sum_{i=1}^k (f(Y_i) - \pi(f) )} \geq x} \leq
C \Big \{\frac{n}{x^p} + \frac{n^{r/2}}{x^r} +
 \frac{(n \log n)^{r/2}}{x^r} \un_{p=2} \Big \} \, ,
\end{equation}
where $C$ depends on $\norm{f}_{\infty}$, on $p$ and on $r$.

However, many stationary processes are not strong mixing in the sense of
Rosenblatt. This is the case, for instance,  of the iterates of an ergodic
measure-preserving transformation.  In the recent paper
~\cite{Dedecker_Merlevede_deviation_alpha}, the authors proved that, using a
weaker version of the $\alpha$--mixing coefficients, it is still possible to
get the same upper bound as~\eqref{rio-FNine} but for bounded variation
observables and with the restriction $r \in (2 (p-1), 2p)$. This last
restriction does not affect the asymptotic behavior of the probability of
large deviations (that is when $x=ny$ in~\eqref{rio-FNine} with $y$ fixed)
but gives a restriction for the moderate deviation behavior.

The aim of this paper is to obtain upper bounds of the type~\eqref{rio-FNine}
for stationary Markov chains,  when the mixing property of the chain is
defined through  a subclass of bounded observables ${\mathcal B}$, but
without restriction on $r$. In that case, the deviation inequality (see our
Theorem~\ref{ModDevMC}) will be valid for any observable $f \in {\mathcal
B}$. Maybe the same kind of inequalities can be proved in a more general (but
non $\alpha$-mixing) context than the Markovian setting, but the proof we
give here uses the Markovian property in a crucial way.

\smallskip

Let us now present more precisely the assumptions on the Markov chains and
the main results of the paper.

\smallskip

Let $(Y_i)_{i \in {\mathbb Z}}$ be a homogeneous Markov chain on a state
space $\calX$, with transition operator $K$, admitting a stationary
probability measure $\pi$. Let $\norm{\cdot}$ be a norm on a vector space
$\calB$ of functions from $\calX$ to $\R$. We always require that the
constant function equal to $1$ belongs to $\calB$. This norm will be used to
express mixing conditions on the Markov chain.

We will need this norm to behave well with respect to products, and to be
controlled by the sup norm, as expressed in the next definition.
\begin{Definition}
We say that $\norm{\cdot}$ is a Banach algebra norm on bounded functions if,
for all $f$ and $g$ in $\calB$, one has $\norm{f}_\infty \leq \norm{f}$ and
$\norm{fg} \leq \norm{f}\norm{g}$.
\end{Definition}
\begin{remark}
If a norm $\norm{\cdot}$ satisfies $\norm{f}_\infty \leq C\norm{f}$ and
$\norm{fg} \leq C\norm{f}\norm{g}$ for some constant $C$, then it is
equivalent to a Banach algebra norm on bounded functions, namely
$\norm{f}'=C\norm{f}$.
\end{remark}

The main mixing condition we require is that the iterates of functions in
$\calB$ under the Markov chain converge polynomially to their average. This
is expressed in terms of the following two conditions.
\begin{Definition}
Let $p>1$. We say that the condition \Hun is satisfied if there exists a
positive constant $C_1$ such that, for any function $f \in \calB$ and any
$n\geq 1$,
\begin{equation*}
\tag*{\Hun} \pi \pare[\big]{\abs{K^n(f) - \pi (f)}} \leq \frac{C_1\norm{f}}{n^{p-1}} \, .
\end{equation*}
We say that the condition \Hdeux is satisfied if the space $\calB$ is
invariant under $K$, i.e., there exists a positive constant $C_2$ such that,
for any function $f$ in ${\mathcal B}$,
\begin{equation*}
\tag*{\Hdeux} \norm{K^n (f)} \leq C_2 \norm{f}\, .
\end{equation*}
When both conditions are satisfied, we say that the chain converges
polynomially to equilibrium for the norm $\norm{\cdot}$ with exponent $p$,
and we denote this condition by \H.
\end{Definition}

Heuristically, partial sums of bounded functions of such a polynomially
mixing chain behave like sums of independent random variables with a weak
moment of order $p$. Indeed, if one considers a Harris recurrent Markov chain
for which the excursion time away from an atom has a weak moment of order
$p$, then the successive excursions are independent and have a weak moment of
order $p$, and the mixing rate  behaves like in the definition above. Hence,
one expects that one should prove, under \H, results that are similar to
results for sums of i.i.d.~ random variables with a  weak moment of order
$p$.

In particular, let us consider the question of moderate deviations bounds
\[
\p\pare[\Big]{\max_{1 \leq k \leq n} \abs[\Big]{\sum_{i=1}^k (f(Y_i) - \pi(f) )} \geq x n^{\alpha}} \, ,
\]
where $f$ belongs to ${\mathcal B}$, $x>0$. In analogy with the i.i.d.~ case,
one expects that, if $p\geq 2$, then for any $\alpha \in (1/2,1]$ there
should exist positive constants $C$ depending only on $p$ and on $\norm{f}$,
and $v(x)$ depending only on $x$, such that
\begin{equation}
\label{butnornesup}
\limsup_{n \rightarrow \infty} n^{\alpha p -1} \p\pare[\Big]{\max_{1 \leq k \leq n} \abs[\Big]{\sum_{i=1}^k (f(Y_i) - \pi(f) )}
\geq x n^{\alpha}} \leq C v(x) \, .
\end{equation}

Our main result ensures that this estimate indeed holds, with bounds that are
very similar to the case of the sum of i.i.d.~random variables. We also deal
with the case $p<2$, obtaining similar estimates.

\begin{theorem} \label{ModDevMC}
Let $(Y_i)_{i \in {\mathbb Z}}$ be a stationary Markov chain with state space
$\calX$, transition operator $K$ and stationary measure $\pi$. Assume that
there exists $p>1$ such that \Hun holds, for a Banach algebra norm on bounded
functions.
\begin{enumerate}
\item[(1)] If $p>2$ and we assume in addition that \Hdeux is satisfied
    then, for any $f \in {\mathcal B}$ and any $x>0$,
\begin{equation} \label{eq:main_p>2}
  \p\pare[\Big]{\max_{1 \leq k \leq n} \abs[\Big]{\sum_{i=1}^k (f(Y_i) - \pi(f) )} \geq x}
  \leq \kappa n x^{-p} + \kappa \exp(-\kappa^{-1} x^2/n)\, ,
\end{equation}
where $\kappa$ is a positive constant depending only on $p$, $\norm{f}$,
$C_1$, $C_2$.
\item[(2)] If $p=2$ and we assume in addition that \Hdeux is satisfied
    then, for any $f \in {\mathcal B}$, any $x>0$ and any $r \in (2,4)$,
\begin{equation} \label{eq:main_p=2}
  \p\pare[\Big]{\max_{1 \leq k \leq n} \abs[\Big]{\sum_{i=1}^k (f(Y_i) - \pi(f) )} \geq x}
  \leq \kappa n x^{-2} + \kappa (n \log n)^{r/2} x^{-r} \, ,
\end{equation}
where $\kappa$ is a positive constant depending only on $\norm{f}$, $C_1$,
$C_2$ and $r$.
\item[(3)] If $1 < p < 2$ then, for any $f \in {\mathcal B}$ and any $x
    > 0$,
\begin{equation} \label{eq:main_p<2}
  \p\pare[\Big]{\max_{1 \leq k \leq n} \abs[\Big]{\sum_{i=1}^k (f(Y_i) - \pi(f) )} \geq x}
  \leq \kappa n x^{-p} \, ,
\end{equation}
where $\kappa$ is a positive constant depending only on $p$, $\norm{f}$ and
$C_1$.
\end{enumerate}
\end{theorem}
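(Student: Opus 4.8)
\emph{Reductions and a finite-horizon Poisson equation.} I may assume $\pi(f)=0$ and write $S_k=\sum_{i=1}^k f(Y_i)$. Since \Hun does not by itself guarantee convergence of the Green function $\sum_{m\ge 0}K^m f$ (in $L^1(\pi)$ it converges only when $p>2$), the plan is to work with the finite-horizon Poisson solution $g_N=\sum_{m=0}^{N-1}K^m f$, where the integer $N$ will be chosen of order $x$. The telescoping identity $g_N-Kg_N=f-K^N f$ then gives $f(Y_i)=\bigl(g_N(Y_i)-g_N(Y_{i+1})\bigr)+D_{i+1}+K^N f(Y_i)$, where $D_{i+1}=g_N(Y_{i+1})-Kg_N(Y_i)$ are martingale increments for the natural filtration $(\calF_i)$. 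Summing yields $S_k=M_k+\bigl(g_N(Y_1)-g_N(Y_{k+1})\bigr)+R_k$, with $M_k=\sum_{i=1}^k D_{i+1}$ a martingale and $R_k=\sum_{i=1}^k K^N f(Y_i)$. Here \Hdeux together with the Banach algebra property is exactly what keeps $g_N$ and $g_N^2$ inside $\calB$ with norms controlled in terms of $N$ and $\norm{f}$, while \Hun controls both the $L^1(\pi)$-smallness of $K^N f$ and, through covariance estimates of the form $\abs{\pi(K^m f\, K^{m'}f)}\le C\abs{m-m'}^{-(p-1)}$, the growth of $\pi(g_N^2)$.

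\emph{The polynomial term and the main obstacle.} With $N$ of order $x$, the remainder is handled cheaply: $\max_{k\le n}\abs{R_k}\le \sum_{i=1}^n\abs{K^N f(Y_i)}$, whose expectation is $n\,\pi(\abs{K^N f})\le C_1\norm{f}\,n\,N^{-(p-1)}$, so that Markov's inequality bounds $\p(\max_k\abs{R_k}\ge x/4)$ by a constant times $n\,N^{-(p-1)}/x$, which is of order $n x^{-p}$ by the choice of $N$. The delicate point, and what I expect to be the main obstacle, is the coboundary term $\max_{k}\abs{g_N(Y_{k+1})}$. A crude union bound gives only $n\,\pi(\abs{g_N}\ge x/4)$, and since $\pi(\abs{g_N}\ge x/4)$ is at most of order $x^{-(p-1)}$ this produces $n x^{-(p-1)}$, which is off by a factor $x$ from the target $n x^{-p}$. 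The loss comes from clustering: whenever $g_N$ is large it stays large along a whole stretch of the trajectory, so the $n$ time instants are far from giving $n$ independent chances. Overcoming this requires replacing the union bound by a maximal or stopping-time argument that counts the excursions of the trajectory on which $\abs{g_N}$ exceeds the level $x/4$ and charges each such excursion only once; each excursion above level $x$ costs a probability of order $x^{-p}$, and there are of order $n$ of them, which restores the exponent $p$. This is the atomless analogue of the single-big-jump heuristic for i.i.d.\ excursion sums.

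\emph{The martingale term and the three regimes.} It remains to bound $\p(\max_k\abs{M_k}\ge x/4)$. I would truncate the increments $D_{i+1}$ at a suitable level $T$, chosen to balance the Gaussian and polynomial contributions, absorbing the discarded parts into a further term of the same $n x^{-p}$ type via a sum-of-absolute-values and Markov estimate, and then apply Doob's inequality together with Freedman's Bernstein-type inequality to the truncated martingale. The quadratic variation $\langle M\rangle_n=\sum_i\bigl(K(g_N^2)(Y_i)-(Kg_N(Y_i))^2\bigr)$ is itself a sum of a bounded functional of the chain; using the covariance estimates coming from \Hun one finds $\E\langle M\rangle_n$ of order $n$ when $p>2$, and the deviation of $\langle M\rangle_n$ above a multiple of its mean is controlled by the same scheme. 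This produces the term $\kappa\exp(-\kappa^{-1}x^2/n)$ of case~(1). For $p=2$ the covariance sum diverges logarithmically, so $\E\langle M\rangle_n$ is of order $n\log n$; no exponential bound survives, and I would instead use a Rosenthal-type moment inequality for the martingale at an order $r\in(2,4)$, the upper restriction $r<4$ being dictated by the integrability of $g_N^2$, followed by Markov's inequality, giving the term $\kappa(n\log n)^{r/2}x^{-r}$ of case~(2).

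\emph{Assembling and the regime $1<p<2$.} For $1<p<2$ the second moment of $S_n$ already grows faster than $n$, there is no Gaussian regime, and the whole estimate is driven by large values: both the martingale and the coboundary contributions are shown to be dominated by $\kappa n x^{-p}$, using only \Hun and the $L^1(\pi)$ bounds on $K^m f$ (consistently with the stated dependence of $\kappa$ in case~(3)). In all three cases the passage from $S_k$ to $\max_{1\le k\le n}\abs{S_k}$ is supplied by Doob's maximal inequality for $M_k$ and by the fact that the remaining pieces are themselves maxima of sums of nonnegative quantities, so that the three bounds follow by collecting the polynomial term of the second paragraph with the bulk term appropriate to each value of $p$.
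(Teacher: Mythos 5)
Your route (a finite-horizon Poisson equation and a martingale--coboundary decomposition, with horizon $N\asymp x$) is genuinely different from the paper's proof, which never introduces $g_N$: it cuts the trajectory into blocks of length $n^{1/p}$, writes each block sum as a martingale difference plus a conditional expectation, and controls each piece by a Rosenthal-type inequality of Merlevède--Peligrad together with Freedman's inequality, the point being that every increment stays bounded by $2\norm{f}_\infty n^{1/p}\leq x$. As written, however, your proposal has two gaps, each fatal. The first you name yourself: the coboundary term $\max_k\abs{g_N(Y_{k+1})}$. The fix you invoke --- count the excursions of the trajectory above the level $x/4$ and charge each one a probability $x^{-p}$ --- presupposes a regeneration structure (an atom, i.i.d.-like excursions, control of the entrance law into $\{\abs{g_N}>x/4\}$ and of the time spent there) that simply is not available under \Hun and \Hdeux: these hypotheses give only an $L^1(\pi)$ bound on $K^nf-\pi(f)$ for $f\in\calB$, on a general state space, with no Harris recurrence. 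This single-big-jump picture is correct in the renewal example of Section~4.1, but converting it into a proof under the abstract hypotheses is precisely the open difficulty; the paper's block decomposition is designed so that this term never appears. (Even your input $\pi(\abs{g_N}\geq x/4)\lesssim x^{-(p-1)}$ is not free: from \Hun one gets $\pi(\abs{g_N})=O(1)$ for $p>2$, hence only $x^{-1}$ by Markov; weak moments of order $p-1$ for $g_N$ already require a cancellation argument.)

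The second gap you do not flag, and it breaks the martingale step quantitatively. With $N\asymp x$, the increments $D_{i+1}=g_N(Y_{i+1})-Kg_N(Y_i)$ are bounded only by $O(N)=O(x)$ (via \Hdeux, $\norm{g_N}_\infty\lesssim N\norm{f}$). Freedman's inequality at level $x$ with increments bounded by $R$ yields a factor of the form $\exp\pare[\big]{-c\,x^2/(v+Rx)}$, and with $R\asymp x$ the term $Rx\asymp x^2$ makes this a constant: no decay at all. You propose truncating at a level $T$ and absorbing the discarded parts by "sum-of-absolute-values and Markov", but what \Hun and \Hdeux actually yield about the law of $D_i$ is only $\E(D_i^2)=O(1)$ (itself requiring the cancellation $Kg_N=g_N-f+K^Nf$, since $\pi(g_N^2)$ can genuinely be of order $N^{3-p}$ when $2<p<3$). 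Markov then bounds the discarded contribution $\frac{n}{x}\,\E\pare[\big]{\abs{D_1}\un_{\abs{D_1}>T}}$ by $Cn/(Tx)$, which is $\leq n x^{-p}$ only if $T\gtrsim x^{p-1}\gg x$, i.e.\ no truncation at all. To run your scheme one needs the increments to have conditional weak moments of order $p$, $\p(\abs{D_i}>t\given\calF_i)\lesssim t^{-p}$; this is true in the renewal example and is proved for Young towers in the paper's Section~2 (Lemma~\ref{lemma62CG}, which uses the tower structure), but nothing in your proposal derives it from \Hun and \Hdeux, and that derivation is the real core of the problem. The same issue undermines the claim that the deviations of $\langle M\rangle_n$ are "controlled by the same scheme": the function $K(g_N^2)-(Kg_N)^2$ lies in $\calB$ with norm of order $x^2$, so a recursive application of the theorem to it produces a constant blowing up polynomially in $x$; the paper instead controls this quadratic-variation term by a direct computation on products $K^{a}\pare[\big]{f^{(0)}\,K^{b}f^{(0)}}$, which is exactly where the Banach algebra hypothesis is used.
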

As a consequence of this theorem, we obtain that, if $p>1$,
then~\eqref{butnornesup} holds with $v(x)= x^{-p}$ for any $\alpha >1/2$ such
that $1/p \leq \alpha \leq 1$ provided that \H holds.

\begin{remark}
In~\eqref{eq:main_p>2}, the exponential term $\kappa \exp(-\kappa^{-1}
x^2/n)$ is negligible in the regime $x>n^\alpha$, for any $\alpha > 1/2$.
Hence, the dominating term is $\kappa n x^{-p}$, as expected. However, when
$x$ is of the order of $n^{1/2}$,  then $\kappa n x^{-p}$ tends to $0$, while
the probability on the left of~\eqref{eq:main_p>2} typically does not, thanks
to the central limit theorem. Thus, there has to be a remainder term, given
here in exponential form $\kappa \exp(-\kappa^{-1} x^2/n)$. For any $r>0$,
this is for instance bounded by $C_{\kappa,r} n^r/x^{2r}$.
\end{remark}

\begin{remark}
In~\eqref{eq:main_p=2}, the scaling in $x^2/n\log n$ in the error term is the
right one: in this setting there is sometimes a central limit theorem with
anomalous scaling $\sqrt{n \log n}$ (see for instance~\cite{gouezel_stable}),
meaning that the probability on the left of~\eqref{eq:main_p=2} does not tend
to $0$ when $x$ is of the order of $\sqrt{n \log n}$.
While~\eqref{eq:main_p>2} is completely satisfactory, we expect that the
error term in~\eqref{eq:main_p=2} can be improved, from $(n \log n)^{r/2}
x^{-r}$ with $r\in (2,4)$ to $(n \log n)^{r/2} x^{-r}$ for any $r>2$, or even
to $\exp(-\kappa^{-1} x^2/(n\log n))$. However, we are not able to prove such
a result.
\end{remark}

\medskip

Let us discuss the relevance of the assumption \H in different contexts. Some
possible Banach algebra norms on bounded functions that appear in natural
examples of Markov chains are the following:
\begin{enumerate}
\item $ \norm{f} = \norm{f}_{\infty}\, . $
\item $ \calX= \R \text{ and } \norm{f}$ is the total variation norm of the
    bounded variation function $f$, i.e., the sum of $\norm{f}_{\infty}$
    and the total variation of the measure $\dd f$, i.e., $\norm{f} =
    \norm{f}_{\infty} + \abs{\dd f}$.
\item If $(\calX,d)$ is a metric space, then one can consider the Lipschitz
    norm
\[ \norm{f} = \norm{f}_{\infty} + \norm{f}_{\mathrm{Lip}} \quad\text{where}\quad
   \norm{f}_{\mathrm{Lip}} \coloneqq \sup_{y\neq z \in
    \calX} \frac{\abs{f(y) - f(z)}}{d (y, z)} \, ,
\]
or Hölder norms.
\item $ \calX = \R \text{ and } \norm{f} = \norm{f}_{\infty} + \norm{f
    '}_{L^r(\lambda)}$ for $r \geq 1$, when $f$ is absolutely continuous
    and $f'$ is its almost sure derivative. One can also consider more
    general Sobolev spaces, in dimension $1$ or higher.
\end{enumerate}

\smallskip

Here is a more detailed discussion of some corresponding examples:

\begin{enumerate}
\item When \Hun is satisfied with $\norm{f} = \norm{f}_\infty$, then the
    chain is said to be strong mixing in the sense of Rosenblatt with
    polynomial rate of convergence $n^{p-1}$, and we write in this case
\[
\alpha_n = \sup_{k \geq n} \sup_{\norm{f}_{\infty} \leq 1} \pi \pare[\big]{\abs{K^k(f) - \pi (f)}} \leq \frac{C_1}{n^{p-1}} \, .
\]
Note that for this norm, \Hdeux is trivially satisfied. In this situation,
one can apply the Fuk-Nagaev type inequality~\cite[Theorem 6.1]{rio_book}
(with $q=cx$ for suitably small $c$). If $p\geq 2$, this gives the
inequality~\eqref{rio-FNine}. Hence,~\eqref{butnornesup} follows (although
the error term is worse than in~\eqref{eq:main_p>2}).

\item A lot of Markov chains, even very simple, are known not to be strong
    mixing whereas they satisfy the condition \H for other classes of
    functions. For instance,
\[
  X_n= \sum_{i=0}^\infty \frac{\xi_{n-i}}{2^{i+1}} \, ,
\]
where $(\xi_i)$ is an i.i.d.~sequence of r.v.'s $\sim {\mathcal B}(1/2)$ is
a Markov chain which is not strong mixing. Its invariant measure is the
Lebesgue measure on $[0,1]$ and its transition Markov operator is given by
\[
 K(f)(x)= \frac{1}{2} \pare[\Big]{f\pare[\Big]{\frac x2}+ f\pare[\Big]{\frac {x+1}{2}}} \, .
\]
It can been shown that it satisfies the condition \H for any $p$ when we
consider $\calX = \R $ and the total variation norm $\norm{f} =
\norm{f}_{BV} = \norm{f}_\infty + \abs{\dd f}$.

\smallskip

When \H is satisfied with the total variation norm (i.e., $\calB$ is the
set of functions of bounded variation), one does not have at our disposal a
Fuk-Nagaev type inequality as in the strong mixing case. If $p > 2$, an
application of the deviation inequality
of~\cite[Proposition~5.1]{Dedecker_Merlevede_deviation_alpha} gives that
for any $x > 0$ and any $r \in (2(p-1) , 2p)$,
\[
 \p\pare[\Big]{\max_{1 \leq k \leq n} \abs[\Big]{\sum_{i=1}^k (f(Y_i) - \pi(f) )} \geq x}
 \leq C \Big \{\frac{n}{x^p} + \frac{n^{r/2}}{x^r}
   + \frac{(n \log n)^{r/2}}{x^r} {\bf 1}_{p=2} \Big \} \, ,
\]
where $C$ is a positive constant depending on $p$, $\norm{f} $, $C_1$ and
$C_2$ but not on $n$ nor on $x$. So, provided that $ p < 1/(1-\alpha)$, one
can take $2p > r > \frac{2 (\alpha p -1)}{2 \alpha -1}$ and it follows
that~\eqref{butnornesup} is satisfied with $v(x) = x^{-p}$. Our main
theorem above shows that this restriction of $\alpha$ is not necessary, by
removing the restriction $r \in (2(p-1), 2p)$ for $p>2$. Our proof follows
the same lines as that in~\cite{Dedecker_Merlevede_deviation_alpha}, but we
can get a better bound by taking advantage of the Markovian setting.

\item Several dynamical examples satisfy the assumption \H when
    $\norm{\cdot}$ is the Lipschitz norm or the Hölder norm. Indeed, there
    is a combinatorial model, called Young tower, that can be used to model
    wide classes of systems and for which the assumption \H is directly
    related to return time estimates to the basis of the tower (this is
    explicitly written, for instance, in~(4.3) of
    \cite{dedecker_merlevede_moment}). We refer the interested readers for
    instance to the introduction of~\cite{gouezel_melbourne}, where
    motivations, examples and definitions are given. Our theorem applies to
    such examples, and improves the previous upper bounds of the literature
    such as~\cite{melbourne_moderate} who obtained, when $\alpha \in (1/2,
    1 )$ and $p \geq 2$, a rate of order $(\ln n)^{1-p} n^{(p-1) (2
    \alpha -1)}$ instead of 
    $n^{\alpha p -1}$ in~\eqref{butnornesup}. Using specific properties of such systems established
    in~\cite{gouezel_melbourne}, we are also able to extend
    Theorem~\ref{ModDevMC} to more general functionals than additive
    functionals, see Theorem~\ref{thm:Young_concentration} in
    Section~\ref{sec:Young_concentration}.
\end{enumerate}

In addition, concerning the exponent of $n$, the bound~\eqref{butnornesup} is
optimal as we shall show in Section~\ref{sec:counterexamples}. More
precisely, we shall give there three different examples for which the
deviation probabilities of Theorem~\ref{ModDevMC} are lower bounded by $c \,
nx^{-p}$ for some $c >0$ and $x$ in an appropriate bandwidth. These three
examples are: a discrete Markov chain on $\N$ for which \H is satisfied for
the sup norm, a class of Young towers with polynomial tails of the return
times for which \H is satisfied for a natural Lipschitz norm, and a Harris
recurrent Markov chain with state space $[0,1]$ for which \H is satisfied for
both the sup norm and the total variation norm.  For each example, the
accurate lower bound is given in
Proposition~\ref{prop:deviation_f_lower},~\ref{prop:deviation_f_lower_Young}
and~\ref{prop:deviation_f_lower_Harris} respectively.

Before this, Section~\ref{sec:Young_concentration} is devoted to the
extension of Theorem~\ref{ModDevMC} to more general functionals in the
specific setting of Young towers. The proof of Theorem~\ref{ModDevMC} is
given in Section~\ref{sec:proof_MainThm}.

\section{Concentration for maps that can be modeled by Young towers}
\label{sec:Young_concentration}

In this section, we extend in the specific setting of Young towers
Theorem~\ref{ModDevMC} to more general functionals. As we will not need
specifics of Young towers, we refer the reader to~\cite{gouezel_melbourne}
for the precise definitions, recalling below only what we need for the
current argument. A Young tower is a dynamical system $T$ preserving a
probability measure $\pi$, on a metric space $Z$, together with a subset
$Z_0$ (the basis of the tower) for which the successive returns to $Z_0$
create some form of decorrelation. Thus, an important feature of the Young
tower is the return time $\tau$ from $Z_0$ to itself, and in particular its
integrability properties.

Starting from any $z\in Z$, there is a canonical way to choose at random a
point among the preimages of $z$ under $T$. This defines a Markov chain $Y_n$
for which $\pi$ is stationary, and which is dual to the dynamics (in the
sense that $Y_0,\dotsc, Y_{n-1}$ is distributed like $T^{n-1}z, \dotsc, z$
when $z$ is picked according to $\pi$). The decorrelation properties of this
Markov chain are related to the return time function $\tau$. Namely, if
$\tau$ has a weak moment of order $p>1$, then the Markov chain satisfies \H
for this $p$.

Proving quantitative estimates for the Markov chain or the dynamics is
equivalent. In this section, we will for simplicity formulate the results for
the dynamics, as the estimates of~\cite{gouezel_melbourne} we will use are
formulated in this context.

The class of functionals for which we will prove moderate deviations is the
class of separately Lipschitz functions: these are the functions
$\calK=\calK(z_0,\dotsc, z_{n-1})$ such that, for all $i \in [0,n-1]$, there
exists a constant $L_i$ (the Lipschitz constant of $\calK$ for the $i$-th
variable) with
\begin{equation*}
\abs{\calK(z_0, \ldots, z_{i-1}, z_i, z_{i+1}, \ldots, z_{n-1})
- \calK(z_0, \ldots, z_{i-1}, z'_i, z_{i+1}, \ldots, z_{n-1})} \leq L_i d(z_i,z_i')
\end{equation*}
for all points $z_0,\dotsc, z_{n-1}, z'_i$. We will write $\E \calK$ for the
average of $\calK$ with respect to the natural measure along trajectories
coming from the dynamics, i.e.,
\begin{equation*}
  \E \calK = \int \calK(z, Tz,\dotsc, T^{n-1}z) \dd\pi(z).
\end{equation*}
The article~\cite{gouezel_melbourne} proves optimal moment estimates for
$\calK-\E \calK$. We can prove moderate deviations for this quantity,
extending in this context the results of Theorem~\ref{ModDevMC} to more
general functionals than additive functionals.

\begin{theorem}
\label{thm:Young_concentration} Consider a Young tower $T:Z\to Z$, for which
the return time $\tau$ to the basis has a weak moment of order $p>1$. Let
$\calK$ be a separately Lipschitz function, with Lipschitz constants $L_i$.
Then
\begin{itemize}
\item If $p>2$, then for all $x>0$ one has
\begin{equation}
\label{eq:concentr_p>2}
  \pi\{z \st \abs{\calK(z,\dotsc, T^{n-1}z) -\E \calK} > x\} \leq \kappa \frac{\sum_{i=0}^{n-1} L_i^p}{x^p}
  + \kappa\exp \pare*{- \kappa^{-1}\frac{x^2}{\sum L_i^2}}\,.
\end{equation}
\item If $p=2$, then for all $x>0$
\begin{multline}
\label{eq:concentr_p=2}
  \pi\{z \st \abs{\calK(z,\dotsc, T^{n-1}z) -\E \calK} > x\} \leq \kappa \frac{\sum_{i=0}^{n-1} L_i^2}{x^2}
  \\
  + \kappa\exp \pare*{- \kappa^{-1}\frac{x^2}{\pare[\big]{\sum L_i^2} \cdot (1+\log(\sum L_i) - \log(\sum L_i^2)^{1/2})}}\,.
\end{multline}
\item If $p<2$, then for all $x>0$ one has
\begin{equation}
\label{eq:concentr_p<2}
  \pi\{z \st \abs{\calK(z,\dotsc, T^{n-1}z) -\E \calK} > x\} \leq \kappa \frac{\sum_{i=0}^{n-1} L_i^p}{x^p}\,.
\end{equation}
\end{itemize}
In all these statements, $\kappa$ is a positive constant that does not depend
on $\calK$ nor $n$.
\end{theorem}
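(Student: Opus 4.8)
The plan is to mimic the martingale strategy behind Theorem~\ref{ModDevMC}, replacing the additive structure by the separately Lipschitz structure and importing the relevant control of increments from~\cite{gouezel_melbourne}. Writing $z_i = T^i z$, duality turns $(z_{n-1},\dotsc,z_0)$ into a trajectory of the dual Markov chain, so $\calK(z,\dotsc,T^{n-1}z) - \E\calK$ becomes a function of this chain. First I would introduce the natural filtration $(\calF_i)$ obtained by revealing the trajectory progressively and set $D_i = \E[\calK \given \calF_i] - \E[\calK \given \calF_{i-1}]$, so that $\calK - \E\calK = \sum_{i} D_i$ is a sum of martingale differences. The whole point is then to quantify how the $i$-th increment depends on the Lipschitz constants $L_j$.

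The key input, coming from~\cite{gouezel_melbourne}, is that for a separately Lipschitz functional the influence of the coordinate $z_j$ on the conditional expectations propagates through the tower at a rate governed by the return time $\tau$. Concretely, I expect to split each $D_i$ into a ``diffusive'' part, whose conditional variances sum to at most a constant times $\sum_j L_j^2$, and a ``heavy'' part carrying the contributions of long returns, whose tails are controlled by the weak moment of order $p$ of $\tau$ and ultimately produce the polynomial term $\sum_j L_j^p / x^p$. This is exactly the place where the individual constants $L_j$ enter with the correct weights; it generalizes the uniform bound $L_j = \norm{f}$ that suffices for the additive case of Theorem~\ref{ModDevMC}.

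With such a decomposition in hand, the last step is a Fuk--Nagaev / Bernstein deviation inequality for martingales, applied as in the proof of Theorem~\ref{ModDevMC}: truncating the increments at a level of order $x$, the truncated martingale obeys a sub-Gaussian bound driven by the conditional variance $\sum_j L_j^2$ (giving the exponential terms $\kappa \exp(-\kappa^{-1} x^2/\sum L_i^2)$), while the discarded tails contribute the polynomial term. The borderline exponent $p=2$ produces, as in~\eqref{eq:main_p=2}, a logarithmic correction in the variance normalisation, which accounts for the factor $1+\log(\sum L_i)-\log(\sum L_i^2)^{1/2}$ in~\eqref{eq:concentr_p=2}.

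The main obstacle is the middle step: establishing that the martingale increments of a general separately Lipschitz function carry the correct, coordinate-by-coordinate dependence on the $L_j$. For additive functionals all coordinates are interchangeable and this is immediate, but for arbitrary $\calK$ one must track how the Lipschitz constant of each variable is transported and damped by the return-time structure of the tower. Making this precise is precisely what the estimates of~\cite{gouezel_melbourne} are designed to provide, and assembling them into a single martingale-difference bound with the right weights $\sum_j L_j^2$ and $\sum_j L_j^p$ is the technical heart of the argument.
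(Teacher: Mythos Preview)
Your outline has the right skeleton --- martingale decomposition of $\calK-\E\calK$, conditional-variance control giving the exponential term, and a heavy-tail contribution giving the polynomial term, combined through a Fuk--Nagaev inequality for martingales --- and this is indeed what the paper does. However, two points deserve correction. First, the reference to ``as in the proof of Theorem~\ref{ModDevMC}'' is misleading: that proof uses blocks, Rosenthal-type inequalities and Freedman's inequality, none of which appear here. The paper's proof of Theorem~\ref{thm:Young_concentration} is much more direct: one applies a single Fuk-type inequality (Corollary~\ref{lem:Fuk_weak_norm}) to the reverse martingale differences $d_k=\E(\calK\mid\calF_k)-\E(\calK\mid\calF_{k+1})$, with no truncation or blocking at all. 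Second, the increments are not ``split into a diffusive and a heavy part''; instead one feeds into the Fuk inequality two ingredients: the conditional variance bounds $\sum_k\norm{\E(d_k^2\mid\calF_{k+1})}_\infty\le C\sum L_j^2$ (with the logarithmic factor when $p=2$), and a conditional \emph{weak} $p$-moment bound on each $d_k$.

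The latter is the technical heart you allude to but do not make explicit, and it is not quite off-the-shelf in~\cite{gouezel_melbourne}. The paper proves (Lemma~\ref{lemma62CG}, a sharpening of~\cite[Lemma~6.2]{gouezel_chazottes_concentration}) that
\[
  \p(\abs{d_k}\ge t\mid\calF_{k+1})
  \le C t^{-p}\sum_{j\le k} L_j^p\,c_{k-j}^{(0)}
   + C t^{-p}\sup_{h>0}\Bigl(h^{-1}\sum_{j=k-h+1}^{k} L_j\Bigr)^p,
\]
where $(c_k^{(0)})$ is summable. Summing the first piece over $k$ is immediate by Fubini; summing the second requires the Hardy--Littlewood maximal inequality in $\ell^p$, which is the one nontrivial step you are missing. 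Once $\sum_k M_k^p\le C\sum_j L_j^p$ is established in this way, Corollary~\ref{lem:Fuk_weak_norm} gives the result in one stroke.
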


The case $p<2$ is already proved in~\cite[Theorem~1.9]{gouezel_melbourne} and
is included only for completeness. The logarithms in the $p=2$ case are not
surprising: this expression is homogeneous in the $L_i$ (i.e., if one
multiplies all the $L_i$ by a constant then the contribution of the
logarithms does not vary), and it reduces to a multiple of $\log n$ when all
the $L_i$ are equal to $1$. The same expression appears in the moment control
when $p=2$ in~\cite[Theorem~1.9]{gouezel_melbourne}.

To prove this theorem, we use the following deviation inequality
for
martingales,~\cite[Corollary 3']{fuk_martingales} (in which we keep
separately the term corresponding to excess probabilities, as in his
Corollary~3).

\begin{prop}
Let $d_1, \dotsc, d_k $ be a martingale difference sequence with respect to
the non-decreasing  $\sigma$-fields $\calF_0, \ldots, \calF_k$. Let $p\geq
2$. Set $\beta = p/(p+2)$ and $c_p^* = (1-\beta)^2/(2 e^p)$. Then, for all
$x>0$,
\begin{multline*}
  \p\pare[\Big]{\max_{1 \leq j \leq k} \abs[\Big]{\sum_{i=1}^j d_i} \geq x} \leq \sum_{i=1}^k \p (\abs{d_i} \geq \beta x )
  + \frac{2}{\beta^p x^p} \sum_{i=1}^k \norm[\big]{\E(\abs{d_i}^p \un_{\abs{d_i} \leq \beta x} \given \calF_{i-1})}_\infty \\
  + 2 \exp\pare[\Big]{-c_p^* \frac{x^2}{\sum \norm{\E(d_i^2 \given \calF_{i-1})}_\infty}}.
\end{multline*}
\end{prop}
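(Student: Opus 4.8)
The inequality is of Fuk--Nagaev type, and I would prove it by the classical combination of truncation with an exponential maximal inequality for bounded martingales, arranged so that the conditional $p$-th moment of the truncated part shows up in the polynomial term. First I would truncate at the level $y=\beta x$: writing $d_i = \bar d_i + \tilde d_i$ with $\bar d_i = d_i\un_{\abs{d_i}\le y}$ and $\tilde d_i = d_i\un_{\abs{d_i}>y}$, on the event $B=\bigcap_{i=1}^k\{\abs{d_i}\le y\}$ all the large parts vanish, so $\sum_{i\le j}d_i = \sum_{i\le j}\bar d_i$ there. Since $\p(B^c)\le\sum_i\p(\abs{d_i}\ge\beta x)$, this produces the first term and reduces the problem to $\max_j\abs{\sum_{i\le j}\bar d_i}$. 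Because $\E(d_i\given\calF_{i-1})=0$, the recentred variables $M_i = \bar d_i - \E(\bar d_i\given\calF_{i-1})$ form a martingale difference sequence bounded by $2y$, and the drift obeys $\E(\bar d_i\given\calF_{i-1}) = -\E(\tilde d_i\given\calF_{i-1})$, whose conditional absolute value is dominated by $(\beta x)^{1-p}\E(\abs{d_i}^p\un_{\abs{d_i}>\beta x}\given\calF_{i-1})$; being of order $x^{1-p}$ times a conditional $p$-th moment, it only shifts the effective threshold from $x$ to $(1-\beta)x$ and contributes a term of the same polynomial order.

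Next, for the bounded martingale $(M_i)$ I would run the exponential supermartingale argument. Using the elementary bound $\E(e^{\lambda M_i}\given\calF_{i-1}) \le \exp\pare[\big]{\psi(\lambda)\,\E(M_i^2\given\calF_{i-1})}$ with $\psi(\lambda) = (e^{2\lambda y} - 1 - 2\lambda y)/(2y)^2$, the process $\exp\pare[\big]{\lambda\sum_{i\le j}M_i - \psi(\lambda)\sum_{i\le j}\norm{\E(M_i^2\given\calF_{i-1})}_\infty}$ is a supermartingale. Doob's maximal inequality applied to it, together with the symmetric bound for $-M_i$, yields, writing $V=\sum_i\norm{\E(d_i^2\given\calF_{i-1})}_\infty$,
\[
\p\pare[\Big]{\max_{1\le j\le k}\abs[\Big]{\sum_{i=1}^j M_i}\ge z} \le 2\exp\pare[\big]{-\lambda z + \psi(\lambda)V},
\]
where we used $\E(M_i^2\given\calF_{i-1})\le\E(d_i^2\given\calF_{i-1})$.

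Finally I would optimise over $\lambda$ and split the outcome into the two remaining terms. Choosing $\lambda$ of order $y^{-1}\log(1+\,\cdot\,)$ makes the Bennett function $\psi(\lambda)V$ interpolate between a Gaussian regime and a polynomial regime: for small $\lambda$ one keeps only the quadratic part of $\psi$ and obtains $2\exp(-c_p^* x^2/V)$, while the higher-order terms of the expansion $\E(e^{\lambda M_i}\given\calF_{i-1}) = 1 + \sum_{m\ge2}\tfrac{\lambda^m}{m!}\E(M_i^m\given\calF_{i-1})$ are converted, via $\abs{M_i}^m \le (2\beta x)^{m-p}\abs{M_i}^p$ for $m\ge p$ and $\abs{\bar d_i}^p = \abs{d_i}^p\un_{\abs{d_i}\le\beta x}$, into $\tfrac{2}{\beta^p x^p}\sum_i\norm{\E(\abs{d_i}^p\un_{\abs{d_i}\le\beta x}\given\calF_{i-1})}_\infty$. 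The specific values $\beta = p/(p+2)$ and $c_p^* = (1-\beta)^2/(2e^p)$ arise precisely from balancing these two contributions against the residual threshold $z=(1-\beta)x$.

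The main obstacle is this last step: carrying the exact constants through the optimisation so that the three terms separate cleanly, and in particular extracting the conditional $p$-th moment of the \emph{truncated} part (rather than merely its conditional variance) from the exponential expansion while keeping $c_p^*$ explicit. I expect the truncation, the recentring, and the supermartingale construction to be routine, and essentially all the difficulty to reside in this bookkeeping, which is exactly the content of Fuk's original optimisation.
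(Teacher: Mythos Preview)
The paper does not prove this proposition at all: it is quoted verbatim from Fuk's article \cite[Corollary~3']{fuk_martingales}, with the excess-probability term $\sum_i \p(\abs{d_i}\ge\beta x)$ kept separate as in Fuk's Corollary~3. So there is no ``paper's own proof'' to compare your proposal against; the paper treats the inequality as a black box and moves on to Corollary~\ref{Fukreverse} and Corollary~\ref{lem:Fuk_weak_norm}.

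Your sketch is a reasonable outline of the classical route to Fuk's inequality (truncate, recentre, exponential supermartingale, optimise), and is indeed how Fuk proceeds. Two points are somewhat loose, though. First, after recentring you control the drift by $(\beta x)^{1-p}\E(\abs{d_i}^p\un_{\abs{d_i}>\beta x}\given\calF_{i-1})$, which involves the \emph{untruncated} tail moment, whereas the statement features the \emph{truncated} one $\E(\abs{d_i}^p\un_{\abs{d_i}\le\beta x}\given\calF_{i-1})$; in Fuk's argument this drift is in fact absorbed into the first term $\sum_i\p(\abs{d_i}\ge\beta x)$ (via $\abs{\E(\tilde d_i\given\calF_{i-1})}\le\E(\abs{d_i}\un_{\abs{d_i}>\beta x}\given\calF_{i-1})$ and Markov-type bounds), not into the polynomial moment term. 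Second, the separation of the exponential expansion into a pure quadratic piece giving $2\exp(-c_p^* x^2/V)$ and a higher-moment piece giving exactly $2(\beta x)^{-p}\sum_i\norm{\E(\abs{d_i}^p\un_{\abs{d_i}\le\beta x}\given\calF_{i-1})}_\infty$ does not come from a naive Taylor split but from a specific choice of $\lambda$ together with the inequality $e^u-1-u\le u^2/2 + \abs{u}^p e^{\abs u}/p!$-type bounds; you correctly flag this bookkeeping as the crux, but the mechanism you describe (``for small $\lambda$ keep the quadratic part, for $m\ge p$ use $\abs{M_i}^m\le(2\beta x)^{m-p}\abs{M_i}^p$'') would not by itself isolate the two contributions additively with the stated constants.
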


As $\sum_{i=j}^k d_i = \sum_{i=1}^k d_i - \sum_{i=1}^{j-1}d_k$, a similar
result follows for reverse martingale difference sequences, by applying the
previous result to the martingale $d_{k-i}$:
\begin{Corollary} \label{Fukreverse}
Let $d_1, \ldots, d_k $ be a reverse martingale difference sequence w.r.t.\
the non-increasing $\sigma$-fields $\calF_1, \ldots, \calF_{k+1}$ (so $\E(d_i
\given \calF_{i+1}) = 0$ and $d_i$ is $\calF_i$-measurable). Let $p\geq 2$.
Set $\tilde\beta = p/(p+2)$ and $\tilde c_p^* = (1-\beta)^2/(8 e^p)$. Then,
for all $x>0$,
\begin{multline*}
  \p\pare[\Big]{\max_{1 \leq j \leq k} \abs[\Big]{\sum_{i=1}^j d_i} \geq x}
  \leq \sum_{i=1}^k \p (\abs{d_i} \geq \tilde\beta x )
  + \frac{2^{p+1}}{\tilde\beta^p x^p} \sum_1^n \norm[\big]{\E(\abs{d_i}^p \un_{\abs{d_i} \leq \tilde\beta x} \given \calF_{i+1})}_\infty \\
  + 4 \exp\pare[\Big]{-\tilde c_p^* \frac{x^2}{\sum \norm{\E(d_i^2 \given \calF_{i+1})}_\infty}}.
\end{multline*}
\end{Corollary}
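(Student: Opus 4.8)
The plan is to reduce the reverse-martingale statement to the ordinary-martingale Proposition by reversing time. Set
\[
  e_j \eqdef d_{k+1-j} \qand \calG_j \eqdef \calF_{k+1-j}, \qquad 0 \le j \le k .
\]
Because the $\calF_i$ are non-increasing, the $\calG_j$ are non-decreasing. Moreover $e_j = d_{k+1-j}$ is $\calF_{k+1-j}=\calG_j$-measurable, and
\[
  \E\pare[\big]{e_j \given \calG_{j-1}} = \E\pare[\big]{d_{k+1-j}\given \calF_{k+2-j}} = \E\pare[\big]{d_{k+1-j}\given \calF_{(k+1-j)+1}} = 0 ,
\]
so $(e_j)_{1\le j\le k}$ is a genuine martingale difference sequence for the non-decreasing filtration $(\calG_j)$, and the Proposition applies to it verbatim.

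The second step connects the two maxima. Writing $S_m \eqdef \sum_{i=1}^m d_i$ and $T_m \eqdef \sum_{i=1}^m e_i = \sum_{i=k+1-m}^{k} d_i$, the identity $\sum_{i=1}^{m} d_i = \sum_{i=1}^k d_i - \sum_{i=m+1}^k d_i$ reads $S_m = T_k - T_{k-m}$ for $0\le m\le k$. Hence $\abs{S_m}\le \abs{T_k}+\abs{T_{k-m}}\le 2\max_{1\le l\le k}\abs{T_l}$, and therefore
\[
  \p\pare[\Big]{\max_{1\le m\le k}\abs{S_m}\ge x} \le \p\pare[\Big]{\max_{1\le l\le k}\abs{T_l}\ge x/2}.
\]
I would then apply the Proposition to $(e_j)$ at level $x/2$, with $\beta=\tilde\beta=p/(p+2)$, and rewrite each resulting term in terms of the $d_i$ using $e_j=d_{k+1-j}$ and $\calG_{j-1}=\calF_{(k+1-j)+1}$. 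The reindexing $i=k+1-j$ turns the three sums/expressions over $j$ into the corresponding ones over $i$; in particular the variance normalisation $\sum_j \norm{\E(e_j^2\given\calG_{j-1})}_\infty = \sum_i \norm{\E(d_i^2\given\calF_{i+1})}_\infty$ is invariant under the reversal, since it is merely a reordering of the same conditional variances.

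Tracking the constants is where the powers of $2$ in the statement originate. Evaluating the Proposition at $x/2$ makes the individual-deviation sum appear as $\sum_i \p(\abs{d_i}\ge \tilde\beta x/2)$; the truncated $p$-th-moment sum acquires the factor $(x/2)^{-p}=2^p x^{-p}$, which combines with the Proposition's leading $2$ into the prefactor $2^{p+1}/\tilde\beta^p$ (the indicator emerging at level $\tilde\beta x/2$, so that it may be freely enlarged to $\un_{\abs{d_i}\le \tilde\beta x}$ inside the norm); and in the Gaussian term the substitution $x\mapsto x/2$ converts $c_p^* = (1-\beta)^2/(2e^p)$ into $c_p^*/4 = (1-\beta)^2/(8e^p)=\tilde c_p^*$, while the prefactor $2$ is absorbed into the stated $4$.

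I expect no genuine difficulty beyond this bookkeeping: the verification that reversal sends the non-increasing $(\calF_i)$ to a non-decreasing $(\calG_j)$ is routine, and the rest is translation. The one structural point deserving care is the factor of $2$ produced by passing from the head sums $S_m$ to the tail sums $T_l$ delivered by the reversed martingale — this loss is precisely what is recorded by the extra powers of $2$ in the constants and by $\tilde c_p^* = c_p^*/4$, and it manifests in the first sum as the truncation level $\tilde\beta x/2$.
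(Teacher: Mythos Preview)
Your approach is precisely the paper's: reverse time so that $e_j=d_{k+1-j}$ is a forward martingale difference for the non-decreasing filtration $\calG_j=\calF_{k+1-j}$, compare $\max_m\abs{S_m}$ to $\max_l\abs{T_l}$ via $S_m=T_k-T_{k-m}$ at the cost of a factor $2$, and invoke the Proposition at level $x/2$. Your constant-tracking is correct, including the observation that the excess-probability sum naturally emerges at threshold $\tilde\beta x/2$ rather than the stated $\tilde\beta x$; this is a harmless imprecision in the Corollary as written (its only use is in the next corollary, where the extra factor $2^p$ is absorbed into the constant $C_p$).
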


We will use the following consequence for reverse martingales having a
conditional weak moment of order $p$, as follows (the same corollary holds as
well for martingales). This is a finer version of~\cite[Corollary
3']{fuk_martingales}, replacing the strong norm there with a weak norm.
\begin{Corollary}
\label{lem:Fuk_weak_norm} Let $d_1, \ldots, d_k $ be a reverse martingale
difference sequence w.r.t.\ the non-increasing $\sigma$-fields $\calF_1,
\ldots, \calF_{k+1}$ (so $\E(d_i \given \calF_{i+1}) = 0$ and $d_i$ is
$\calF_i$-measurable). Let $p\geq 2$. Assume that, for all $i$, $d_i$ has a
conditional weak moment of order $p$ bounded by a constant $M_i$, i.e.,
$\p(\abs{d_i} \geq x \given \calF_{i+1}) \leq M_i^p/x^p$. Then there exists a
constant $C_p$ only depending on $p$ such that, for all $x>0$,
\begin{equation*}
  \p\pare[\Big]{\max_{1 \leq j \leq k} \abs[\Big]{\sum_{i=1}^j d_i} \geq x} \leq \frac{C_p}{x^p} \sum_{i=1}^k M_i^p
  + 4 \exp\pare[\Big]{-C_p^{-1} \frac{x^2}{\sum \norm{\E(d_i^2 \given \calF_{i+1})}_\infty}}.
\end{equation*}
\end{Corollary}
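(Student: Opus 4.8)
The plan is to derive Corollary~\ref{lem:Fuk_weak_norm} from the martingale inequality of Corollary~\ref{Fukreverse}, the one real difficulty being that a single application injects a spurious logarithm that must be removed by a dyadic scale decomposition. Write $V=\sum_i\norm{\E(d_i^2\given\calF_{i+1})}_\infty$. If $x^2\le c_p V$ for a suitable constant $c_p$, then the exponential term $4\exp(-C_p^{-1}x^2/V)$ is at least $1$ and there is nothing to prove; likewise if $\sum_i M_i^p\ge\eta x^p$ the polynomial term already exceeds $1$. So I may assume $x^2\ge c_p V$ and $\sum_i M_i^p\le\eta x^p$, the latter guaranteeing that the cumulative recentring bias created when the large increments are truncated off (each piece bounded through $\E(\abs{d_i}\un_{\abs{d_i}>y}\given\calF_{i+1})\le\tfrac{p}{p-1}M_i^p y^{1-p}$) is only a small fraction of $x$. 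It also suffices to bound $\p(\max_j\sum_{i\le j}d_i\ge x)$, the two–sided statement following by applying the result to $(-d_i)$ as well.

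I would then apply Corollary~\ref{Fukreverse} and inspect its three terms. The excess–probability term is already of the required form: by the conditional weak–moment hypothesis and the tower property, $\sum_i\p(\abs{d_i}\ge\tilde\beta x)\le\tilde\beta^{-p}x^{-p}\sum_i M_i^p$, while the last term is exactly the target exponential. The obstruction is the middle term $\tfrac{2^{p+1}}{\tilde\beta^p x^p}\sum_i\norm{\E(\abs{d_i}^p\un_{\abs{d_i}\le\tilde\beta x}\given\calF_{i+1})}_\infty$: estimating the truncated conditional $p$-th moment directly from $\p(\abs{d_i}>t\given\calF_{i+1})\le\min(1,M_i^p/t^p)$ produces a bound of order $M_i^p(1+\log_+(\tilde\beta x/M_i))$, so the middle term becomes $x^{-p}\sum_i M_i^p\log_+(\tilde\beta x/M_i)$, which is genuinely larger than the claimed $x^{-p}\sum_i M_i^p$ (already for many equal, small $M_i$). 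No single truncation can fix this: truncating near $x$ keeps the polynomial term sharp but ruins the Gaussian exponent, whereas truncating near $V/x$ restores the exponent but inflates the polynomial term to $x^pV^{-p}\sum_i M_i^p$.

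To kill the logarithm I would peel the increments across the dyadic scales $y_m=2^m(V/x)$, $m=0,\dots,M$, with $y_M\asymp x$ (so $M\asymp\log_2(x^2/V)$). Decomposing $d_i$ into centred truncation differences — a small part with increments at most $y_0\asymp V/x$, one ring part $r_i^{(m)}$ per band $(y_{m-1},y_m]$, and a big part with increments above $x$ — gives mean–zero reverse–martingale–difference sequences summing to $d_i$, so that $\{\max_j\sum_{i\le j}d_i\ge x\}$ is contained in the union of the corresponding events with deviation budgets $x_m$ summing to $x$. The big part contributes only through $\p(\exists i:\abs{d_i}>x)\le x^{-p}\sum_i M_i^p$ — the polynomial term. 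The small part is a bounded martingale with increments $O(V/x)$ and total conditional variance at most $V$, so a classical Bernstein–Freedman exponential inequality for bounded martingale differences yields exactly $\exp(-C_p^{-1}x^2/V)$. Each ring band has increments bounded by $2y_m$ and summed conditional variance at most $\min(V,\,C y_m^{2-p}\sum_i M_i^p)$, and is controlled either by the exponential inequality or by the combinatorial estimate $\p(\text{at least }r\text{ jumps exceed }y_{m-1})\le\tfrac{1}{r!}(y_{m-1}^{-p}\sum_i M_i^p)^{r}$ with $r\asymp x_m/y_m$.

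The heart of the matter — and the step I expect to be the main obstacle — is the allocation of the budgets $x_m$ among the $O(\log(x^2/V))$ bands. One has to hand the finer bands enough budget that their variance– or jump–count exponents reach order $x^2/V$, and the coarser bands just enough that a single (or a few) large jumps already account for their events, all while keeping $\sum_m x_m\le x$. The point that makes the bound clean is that finer bands need a super–linearly growing number of jumps, so their contributions decay geometrically rather than accumulating; choosing the $x_m$ to grow geometrically then makes the polynomial pieces sum to a single multiple of $x^{-p}\sum_i M_i^p$ and the exponential pieces to a single multiple of $\exp(-C_p^{-1}x^2/V)$, with no surviving logarithm. Verifying that one such allocation works uniformly over all admissible $(M_i)$ and $(V_i)$ is the delicate computation; granting it, summing the big–part, ring, and small–part contributions delivers the stated inequality.
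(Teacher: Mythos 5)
Your diagnosis of the difficulty is exactly right: applying Corollary~\ref{Fukreverse} with exponent equal to $p$ makes the truncated conditional moment of order $M_i^p\pare[\big]{1+\log_+(\tilde\beta x/M_i)}$, hence a spurious logarithm. But your conclusion that ``no single truncation can fix this'' is where the proposal goes astray, and it causes you to miss the paper's proof, which is five lines long. The exponent in Corollary~\ref{Fukreverse} is a free parameter, decoupled from the order of the weak-moment hypothesis: the paper simply applies that corollary with any $q>p$, for instance $q=p+1$, keeping the truncation at $\tilde\beta x$. The excess-probability term is still controlled by the weak moment of order $p$, namely $\sum_i \p(\abs{d_i}\ge\tilde\beta x)\le \tilde\beta^{-p}x^{-p}\sum_i M_i^p$; the exponential term is exactly the one in the statement; and the middle term, now a truncated $q$-th moment, produces no logarithm, since
\begin{align*}
  x^{-q}\,\E\pare[\big]{\abs{d_i}^q \un_{\abs{d_i}\le\tilde\beta x}\given\calF_{i+1}}
  &\le x^{-q}\, q\int_{0}^{\tilde\beta x} u^{q-1}\,\p\pare[\big]{\abs{d_i}\ge u\given\calF_{i+1}}\dd u
  \\
  &\le x^{-q}\, q\, M_i^p\,\frac{(\tilde\beta x)^{q-p}}{q-p}
  = \frac{q\,\tilde\beta^{q-p}}{q-p}\,\frac{M_i^p}{x^{p}}\,,
\end{align*}
the integral converging at its upper limit precisely because $q-p>0$. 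Summing over $i$ finishes the proof. The logarithm you fought is an artifact of matching the Fuk exponent to the weak-moment order; raising it strictly above $p$ removes it in one stroke, with no change to the truncation level and no multi-scale machinery.

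As for your dyadic-peeling scheme: it is the standard route by which Fuk--Nagaev-type inequalities are proved from scratch, and it may well be completable, but as written it contains a genuine gap which you name yourself. The allocation of the budgets $x_m$ across the $O(\log(x^2/V))$ bands --- choosing them so that every band's contribution is dominated by $x^{-p}\sum_i M_i^p$ or by $\exp(-C^{-1}x^2/V)$, so that the logarithmically many contributions sum geometrically, and so that $\sum_m x_m\le x$, uniformly over all admissible $(M_i)$, $V$ and $x$ --- is asserted (``granting it'') rather than carried out, and it is exactly the computation on which the whole argument rests. Two subsidiary points would also need care in a complete write-up: the ring parts must be conditionally recentred to remain reverse martingale differences, with the recentring bias absorbed quantitatively via your reduction $\sum_i M_i^p\le\eta x^p$; and the jump-count estimate $\p(\text{at least } r \text{ jumps exceed } y_{m-1})\le \frac{1}{r!}\pare[\big]{y_{m-1}^{-p}\sum_i M_i^p}^{r}$ requires a short induction, since the exceedance events are only conditionally, not jointly, controlled. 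None of this is obviously fatal, but until the allocation is exhibited the proposal is a programme rather than a proof --- and, given the exponent trick above, the entire detour is unnecessary.
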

\begin{proof}
We apply Corollary~\ref{Fukreverse} with any $q>p$, for instance $q=p+1$.
Since $\p (\abs{d_i} \geq \tilde\beta x ) \leq M_i^p/(\tilde\beta x)^p$, the
first term in the upper bound of this lemma is bounded as desired. The last
term is also bounded as desired. It remains to handle the terms involving
$x^{-q} \norm[\big]{\E(\abs{d_i}^q \un_{\abs{d_i} \leq \tilde\beta x} \given
\calF_{i+1})}_\infty$. We have
\begin{align*}
  x^{-q} \E(\abs{d_i}^q \un_{\abs{d_i} \leq \tilde\beta x} \given \calF_{i+1})
  & \leq x^{-q} q\int_{u=0}^{\tilde\beta x} u^{q-1} \p( \abs{d_i} \geq u \given \calF_{i+1}) \dd u
  \\& \leq x^{-q} q M_i^p \int_{u=0}^{\tilde\beta x} u^{q-1} u^{-p} \dd u
  = x^{-q} q M_i^p \frac{(\tilde\beta x)^{q-p}}{q-p}
  \leq C M_i^p / x^p.
\end{align*}
Summing these terms over $i$ gives a bound as in the statement of the
corollary.
\end{proof}

We can now start the proof of Theorem~\ref{thm:Young_concentration}. Assume
that $\tau$ has a weak moment of order $p\geq 2$. Starting from a separately
Lipschitz function $\calK$, Chazottes and Gouëzel consider
in~\cite{gouezel_chazottes_concentration} a sequence $(d_k)_{k\geq 0}$ of
reverse martingale differences with respect to the filtration $\calF_k$ of
functions depending only on coordinates $x_k,x_{k+1},\dotsc$, given by
\[
d_k = \E(\calK \given \calF_k) - \E(\calK \given \calF_{k+1}) \, .
\]
Page 869 in~\cite{gouezel_chazottes_concentration}, it is proved that, if
$p>2$, then
\begin{equation*}
\E (d^2_k \given \calF_{k+1} ) \leq \sum_{j \leq k} c_{k-j}^{(0)} L_j^2 \, ,
\end{equation*}
where $c_k^{(0)}$ denotes a generic summable sequence that does not depend on
$\calK$ nor $n$. Therefore,
\begin{equation}
\label{eq:L2_p>2}
  \sum_k \norm{\E (d^2_k \given \calF_{k+1} )}_\infty \leq C \sum L_j^2\,.
\end{equation}
Moreover, if $p=2$,~\cite[Section 4.2]{gouezel_melbourne} shows that
\begin{equation}
\label{eq:L2_p=2}
  \sum_k \norm{\E(d_k^2 \given \calF_{k+1})}_\infty \leq C \pare[\big]{\sum L_i^2}
    \cdot \bracke[\Big]{1+\log\pare[\big]{\sum L_i} - \log\pare[\big]{\sum L_i^2}^{1/2}}\,.
\end{equation}

Now we use the following modification
of~\cite[Lemma~6.2]{gouezel_chazottes_concentration}
\begin{lma} \label{lemma62CG} For all $t > 0$ and all integer $k$,
\[
\p\pare[\big]{\abs{d_k} \geq t \given \calF_{k+1}} \leq C t^{-p} \sum_{j =0}^k L_j^p c_{k-j}^{(0)}
+ C t^{-p} \sup_{h > 0} \pare[\Big]{h^{-1} \sum_{j=k-h +1}^k L_j}^p \, .
\]
\end{lma}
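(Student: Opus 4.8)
The plan is to control the conditional tail of $d_k$ by realizing it through a coupling of two trajectories that share their future after time $k$, expanding the resulting difference of $\calK$-values with the separate Lipschitz constants, and estimating each coordinate-wise distance through the contraction structure of the tower, whose fluctuations are governed by the return time $\tau$ and its weak moment of order $p$. Concretely, first I would record a coupling representation of $d_k = \E(\calK \given \calF_k) - \E(\calK \given \calF_{k+1})$: conditionally on $\calF_{k+1}$ it is an average of differences $\calK(\mathbf z) - \calK(\mathbf z')$ between two realizations that are compatible with the conditioning, share all coordinates of index $>k$, and are coupled so as to coincide once they have jointly returned to the basis $Z_0$; they may therefore differ only on coordinates $z_j, z_j'$ with $j \le k$. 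Separate Lipschitzness then gives the pointwise bound $\abs{\calK(\mathbf z)-\calK(\mathbf z')} \le \sum_{j=0}^k L_j\, d(z_j,z_j')$, so that $\abs{d_k} \le \sum_{j=0}^k L_j D_j$ with $D_j \coloneqq d(z_j, z_j')$.

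Next I would control $D_j$ by the hyperbolic structure recalled in \cite{gouezel_chazottes_concentration}: as long as the two coupled orbits sit in a common tower without returning to $Z_0$, their distance contracts geometrically from the base, and a return resets the contraction. Hence $D_j$ is geometrically small whenever there are frequent returns between $j$ and $k$, and remains only of order the diameter across a long excursion, that is a window of length $\ge h$ carrying no return. The weak moment of order $p$ of $\tau$ yields the conditional tail $\p(\text{excursion of length} \ge h \given \calF_{k+1}) \le C h^{-p}$, while the frequent short returns contribute a summable kernel $c^{(0)}_{k-j}$.

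Then I would split $\sum_{j\le k} L_j D_j$ into its geometrically decorrelated part and its single long-excursion part, and take conditional probabilities. The decorrelated part gives, coordinate by coordinate, a conditional weak moment $\p(L_j D_j \ge t \given \calF_{k+1}) \lesssim c^{(0)}_{k-j} L_j^p t^{-p}$, and summation over $j$ produces the first term $C t^{-p}\sum_{j=0}^k L_j^p c^{(0)}_{k-j}$. For the excursion part, on the event that the dominant excursion below $k$ has length $R$ one has $\abs{d_k} \lesssim \sum_{j=k-R+1}^k L_j$; writing $S(h) = \sum_{j=k-h+1}^k L_j$ and $R_0(t)=\min\{h : S(h)\ge t\}$, the event $\{\abs{d_k}\ge t\}$ forces $R \ge R_0(t)$ and hence has conditional probability $\le C R_0(t)^{-p}$. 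Since $t \le S(R_0(t)) \le R_0(t)\, \sup_{h>0} h^{-1}S(h)$, this is bounded by $C t^{-p}\sup_{h>0}\pare[\big]{h^{-1}\sum_{j=k-h+1}^k L_j}^p$, the second term.

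The hard part will be the second step: faithfully translating the dual Markov/tower contraction into quantitative conditional bounds on the individual $D_j$ and on the long-excursion event, i.e.\ adapting \cite[Lemma~6.2]{gouezel_chazottes_concentration} from its original strong-norm form to the present conditional weak-moment form, in the same spirit as the passage from Corollary~\ref{Fukreverse} to Corollary~\ref{lem:Fuk_weak_norm}. In particular one must check that the contributions genuinely organize into one dominant excursion, responsible for the maximal-average term, plus a geometrically summable remainder, responsible for the kernel term, rather than an uncontrolled superposition of excursion contributions.
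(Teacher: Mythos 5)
Your overall architecture coincides with the paper's, which itself follows the proof of Lemma~6.2 of \cite{gouezel_chazottes_concentration} up to (6.1) there: represent $d_k$ conditionally on $\calF_{k+1}$ via the preimage/coupling structure of the tower, expand by separate Lipschitzness, and split into a summable-kernel part and a current-excursion part. Your treatment of the excursion part is exactly the paper's computation: with $S(h)=\sum_{j=k-h+1}^k L_j$ and $R_0(t)$ the smallest $h$ with $S(h)\geq t/2$, the event forces the return time to be at least $R_0(t)$, the weak moment of $\tau$ gives conditional probability at most $C\,R_0(t)^{-p}$, and $t\lesssim S(R_0(t))\leq R_0(t)\sup_{h>0}h^{-1}S(h)$ converts this into the second term of the lemma.

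The genuine gap is in your kernel term. The inference ``$\p(L_jD_j\geq t\given\calF_{k+1})\lesssim c^{(0)}_{k-j}L_j^pt^{-p}$ coordinate by coordinate, and summation over $j$ produces the first term'' is not a valid step: weak-type tail bounds do not add across the decomposition $\sum_{j\leq k}L_jD_j$. To bound the tail of the sum from per-coordinate tails one must introduce weights $w_j$ with $\sum_j w_j\leq 1$ and use $\{\sum_j L_jD_j\geq t/2\}\subset\bigcup_j\{L_jD_j\geq w_jt/2\}$, which yields $t^{-p}\sum_j L_j^p\,c^{(0)}_{k-j}w_j^{-p}$, a kernel degraded by $w_j^{-p}$; equivalently, the quasi-norm triangle inequality in weak $L^p$ only gives $t^{-p}\pare[\big]{\sum_j(c^{(0)}_{k-j})^{1/p}L_j}^p$. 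Since in the tower setting $c^{(0)}_i$ decays only polynomially (of order $i^{1-p}$, coming from $\pi(\tau\geq i)\leq Ci^{-p}$), no summable choice of weights keeps $c^{(0)}_iw_i^{-p}$ summable: already with all $L_j=1$ the desired bound is $Ct^{-p}$ uniformly in $k$, while any such union bound produces a quantity growing with $k$. The undegraded kernel $\sum_j L_j^pc^{(0)}_{k-j}$ is precisely what, after summation over $k$, yields the term $\kappa\sum L_i^p/x^p$ in Theorem~\ref{thm:Young_concentration}, so this loss is fatal. The correct mechanism --- and the actual content the paper imports from \cite{gouezel_chazottes_concentration} --- bounds the conditional measure of the whole event $\{A_1\geq t/2\}$ at once, exploiting that all the $D_j$ are driven by the same sequence of returns to the basis (they are geometrically small in the \emph{number of returns} between $j$ and $k$), so the contributions organize by excursions rather than by coordinates. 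You correctly flag this as ``the hard part'', but the per-coordinate route you propose for it is the one that fails. A smaller omission: for $p=2$ the estimate (4.8) of \cite{gouezel_chazottes_concentration} is unavailable and must be replaced by Lemma~4.2 of \cite{gouezel_melbourne}, as the paper's proof points out; your proposal does not address this case.
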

\begin{proof}
We just follow the lines of the proof of Lemma 6.2 in
~\cite{gouezel_chazottes_concentration} up to (6.1). Note that this paper
requires the condition $p>2$ (for the validity of (4.8) there), but Lemma~4.2
in~\cite{gouezel_melbourne} replaces this inequality for $p=2$.

For the first sum we have as in~\cite{gouezel_chazottes_concentration}
\[
\sum_{A_1(z_{\alpha}) \geq t/2} g(z_{\alpha}) \leq C t^{-p} \sum_{j \leq k} L_j^p c_{k-j}^{(0)} \, .
\]
On the other hand, if $h$ denotes the smallest $\ell$ such that
$\sum_{j=k-\ell +1}^k L_j \geq t/2$, then
\begin{equation*}
\sum_{A_2(z_{\alpha}) \geq t/2} g(z_{\alpha}) \leq C \pi(\tau \geq h )
\leq C h^{-p} \leq C t^{-p} \sup_{h > 0} \pare[\Big]{h^{-1} \sum_{j=k-h +1}^k L_j}^p \, .
\qedhere
\end{equation*}
\end{proof}

\begin{proof}[Proof of Theorem~\ref{thm:Young_concentration} when $p\geq 2$]
We apply Lemma~\ref{lem:Fuk_weak_norm} to $d_k$, with
\begin{equation}
\label{eq:defMk}
  M_k^p = C \sum_{j = 0}^k L_j^p c_{k-j}^{(0)}
  + C\sup_{h > 0} \pare[\Big]{h^{-1} \sum_{j=k-h +1}^k L_j}^p
\end{equation}
thanks to Lemma~\ref{lemma62CG}. As $c_k^{(0)}$ is summable, the sum over $k$
of the first term is bounded by $C' \sum L_j^p$. An application of the
Hardy-Littlewood maximal inequality in $\ell^p$ gives
\[
\sum_{k\geq 0} \sup_{h > 0} \pare[\Big]{h^{-1} \sum_{j=k-h +1}^k L_j}^p \leq C \sum_j L_j^p \, .
\]
Hence, the sum over $k$ of the second term in~\eqref{eq:defMk} is also
bounded by $C \sum_j L_j^p$. This shows that the first term in
Corollary~\ref{lem:Fuk_weak_norm} gives rise to a bound $C\sum L_i^p/x^p$.

Finally, the second term in Lemma~\ref{lem:Fuk_weak_norm} gives rise to the
exponential error term in the statement of the theorem, thanks
to~\eqref{eq:L2_p>2} when $p>2$ and to~\eqref{eq:L2_p=2} when $p=2$.
\end{proof}

\begin{remark}
Assume that $p>2$ and for any $i$, $L_i \leq 1$. In this case, integrating
Inequality~\eqref{eq:concentr_p>2} leads to
\[
\norm{\calK - \E \calK}^{2(p-1)}_{\pi, 2(p-1)} \ll n^{p-1} \, .
\]
However, in the case of general $L_i$, we do not recover for this moment the
bound $C (\sum L_i^2)^{p-1}$ proved in~\cite[Theorem~1.9]{gouezel_melbourne}
(consider for instance the case $L_0=1$ and $L_1=\dotsc=L_{n-1}=1/\sqrt{n}$).
This moment bound, combined with Markov inequality, gives
\begin{equation*}
  \pi\{\abs{\calK -\E \calK} > x\} \leq \kappa \frac{\pare[\Big]{\sum L_i^2}^{p-1}}{x^{2p-2}}.
\end{equation*}
For the case where all $L_i$ are of the order of $1$, this bound is worse
than the bound of Theorem~\ref{thm:Young_concentration}. However,
surprisingly, it can be better when the $L_i$ vary a lot, for instance when
$L_0=1$ and $L_1=\dotsc=L_{n-1}=1/\sqrt{n}$, and $x = n^{1/4}$.
\end{remark}

\section{Upper bounds for moderate deviations}

\label{sec:proof_MainThm}

In this section, we prove Theorem~\ref{ModDevMC}. Cases (3) and (2) follow
more or less readily from existing inequalities in the literature, while Case
(1) is really new.

\subsection{Proof of Item (3) in Theorem~\ref{ModDevMC}}

Item (3) follows directly from an application of Proposition~4
in~\cite{Dedecker_Merlevede_LLN_Banach}. Indeed, let $M = \norm{f}_{\infty}$
and
\[
 \gamma(k) = \norm{\E(f (Y_k) \given Y_0) - \pi (f)}_1 \, , \text{ for $k \geq 0$}.
\]
\cite[Proposition~4]{Dedecker_Merlevede_LLN_Banach} together with
stationarity implies that for any integer $q$ in $[1,n]$, and any $x \geq
Mq$,
\[
\p\pare[\Big]{\max_{1 \leq k \leq n} \abs[\Big]{\sum_{i=1}^k (f(Y_i) - \pi(f) )} \geq 4 x} \leq  \frac{4 n M}{x^2} \sum_{i=0}^{q-1} \gamma (i) +
 \frac{2 n}{x q} \sum_{i=q+1}^{2q} \gamma (i) \, .
\]
Note that if $x \geq nM/2$ the bound is trivial since the probability is
equal to zero. It is also trivial if $x \leq M \sqrt{2n}$. Therefore we can
always assume that $ M \leq x \leq n M$ and select $q = [x /M]$. Combined
with the fact that, by \Hun,
\[
\gamma (k) \leq C_1 \norm{f} (k+1)^{1-p} \, ,
\]
this gives
\[
\p\pare[\Big]{\max_{1 \leq k \leq n} \abs[\Big]{\sum_{i=1}^k (f(Y_i) - \pi(f) )} \geq 4 x}
\leq  \Big (  \frac{4 n M^{p-1} C_1 \norm{f} }{2-p  }+ 2^p M^{p-1} C_1 \norm{f}  \Big ) n x^{-p} \, .
\]
This ends the proof of Item (3). \qed

\subsection{A  deviation inequality}

For $r>2$, the Rosenthal inequality for sums of centered i.i.d.~random
variables $Z_i$ is the inequality
\begin{equation}
\label{eq:Rosenthal_iid}
  \E\pare[\Big]{\abs[\Big]{\sum_{i=1}^N Z_i}^r} \ll N \E(\abs{Z_1}^r) + N^{r/2} \E(Z_1^2)^{r/2}\,,
\end{equation}
where the implied constant only depends on $r$. What makes this inequality
extremely useful is that the dominating coefficient $N^{r/2}$ is multiplied
by an $L^2$-norm, which is usually mild to control, while the larger
$L^r$-norm only has a coefficient $N$.

We will use repeatedly a Rosenthal-like inequality for weakly dependent
sequences, due to Merlevède and Peligrad, in the following form which is well
suited for the applications to moderate deviations we have in mind. Note
that, in the following statement, all conditional expectations are of the
form $\E(f(Z_i) \given \calG_0)$ for some $i\geq 2$: this means that suitable
mixing conditions can be used to control such terms. The other two terms are
of Rosenthal-type as in the i.i.d.~case, and can thus be controlled using
minimal knowledge on $Z_1$.
\begin{theorem}
\label{thm:Rosenthal} Let $Z_i$ be a strictly stationary sequence of random
variables, adapted to a filtration $\calG_i$. Write $S_i=\sum_{1}^i Z_k$.
Consider a real number $r>2$. Then, for all $N$ and all $x$,
\begin{multline*}
  \p(\max_{i\leq N} \abs{S_i} \geq x)
  \ll \frac{N}{x} \norm{\E(Z_2 \given \calG_0)}_1
  + \frac{N}{x^r} \E(\abs{Z_1}^r)
  + \frac{N^{r/2}}{x^r} \E(Z_1^2)^{r/2}
\\  + \frac{N}{x^r} \bracke*{\sum_{k=1}^N \frac{1}{k^{1+2\delta/r}}
       \pare[\bigg]{\sum_{i=2}^k \norm{\E(Z_i^2 \given \calG_0) - \E(Z_i^2)}_{r/2}}^\delta}^{r/(2\delta)},
\end{multline*}
where $\delta = \min(1, 1/(r-2)) \in (0,1]$. The implied multiplicative
constant in the inequality only depends on $r$.
\end{theorem}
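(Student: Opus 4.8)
The plan is to reduce the maximal deviation estimate to a martingale maximal inequality through a martingale approximation that separates off a pure drift term, and then to control the martingale by a Rosenthal-type moment inequality whose predictable quadratic variation is split into a deterministic piece and a fluctuating piece. To begin, I would use the two-step decomposition $Z_i = D_i + g_i$ with $g_i = \E(Z_i \given \calG_{i-2})$ and $D_i = Z_i - \E(Z_i\given\calG_{i-2})$, so that $\E(D_i \given \calG_{i-2}) = 0$. The gap of two (rather than one) is exactly what the hypothesis controls, since by stationarity $\norm{g_i}_1 = \norm{\E(Z_2\given\calG_0)}_1$ for every $i$. Bounding $\max_{j\le N}\abs{\sum_{i\le j}g_i}\le \sum_{i\le N}\abs{g_i}$ almost surely and taking expectations gives $\E\max_{j\le N}\abs{\sum_{i\le j}g_i}\le N\norm{\E(Z_2\given\calG_0)}_1$, so after splitting the event $\{\max_i\abs{S_i}\ge x\}$ and applying Markov's inequality at exponent $1$ to this drift part, I obtain the first term $\tfrac{N}{x}\norm{\E(Z_2\given\calG_0)}_1$.

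For the martingale part $M_j = \sum_{i\le j}D_i$, the sequence $(D_i)$ is a martingale difference sequence only with a gap of two, so I would separate even and odd indices into two genuine martingale difference sequences, each adapted to a subfiltration ($(\calG_{2\ell})_\ell$, resp.\ $(\calG_{2\ell+1})_\ell$), and treat them identically. To each I apply the Burkholder--Rosenthal inequality for $r\ge2$, in its maximal form with the predictable quadratic variation,
\[
 \E\Bigl(\max_j \abs[\big]{\sum_{i\le j} D_i}^{r}\Bigr)\ll \sum_i \E\abs{D_i}^{r} + \E\Bigl[\Bigl(\sum_i \E(D_i^2\given\calG_{i-2})\Bigr)^{r/2}\Bigr],
\]
and then Markov at exponent $r$. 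Conditional Jensen gives $\E\abs{D_1}^{r}\ll\E\abs{Z_1}^{r}$, so the first sum contributes $\tfrac{N}{x^r}\E(\abs{Z_1}^r)$; it remains to handle the predictable quadratic variation.

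Here I would write $\E(D_i^2\given\calG_{i-2}) = \E(D_1^2) + W_i$ with $W_i$ centered and stationary, and split $\bigl(\sum_i \E(D_i^2\given\calG_{i-2})\bigr)^{r/2}\ll (N\E(D_1^2))^{r/2} + \abs{\sum_i W_i}^{r/2}$. Since $\E(D_1^2)\le\E(Z_1^2)$, the deterministic part gives the Gaussian-type term $\tfrac{N^{r/2}}{x^r}\E(Z_1^2)^{r/2}$. For the fluctuation I would invoke the Merlevède--Peligrad $L^{s}$ moment inequality for partial sums of a stationary sequence, with $s=r/2$ and interpolation exponent $\delta=\min(1,1/(r-2))$, expressed through the projective coefficients of $(W_i)$: it gives $\E\abs[\big]{\sum_{i\le N}W_i}^{r/2}\ll N\bigl[\sum_{k=1}^N k^{-1-2\delta/r}(\sum_{i=2}^k\norm{\E(W_i\given\calG_0)}_{r/2})^\delta\bigr]^{r/(2\delta)}$, with $N$ as the overall prefactor. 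After dividing by $x^r$ this is exactly the last term, once one checks that $\norm{\E(W_i\given\calG_0)}_{r/2}$ is dominated by $\norm{\E(Z_i^2\given\calG_0)-\E(Z_i^2)}_{r/2}$ up to lower-order contributions coming from the $\E(Z_i\given\calG_{i-2})$ piece hidden in $D_i^2$.

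The main obstacle is this last step: proving the $L^{r/2}$ partial-sum inequality for the conditional-variance process $(W_i)$ with the sharp weight $k^{-1-2\delta/r}$ and the interpolation exponent $\delta=\min(1,1/(r-2))$. This is the genuinely dependent ingredient, imported from the work of Merlevède and Peligrad, where $\delta$ plays the role of the Rosenthal interpolation exponent governing the transition between the regime dominated by the variance and the regime dominated by the individual terms. The other point requiring care is purely in the bookkeeping: passing from the projective coefficients of $(W_i)$ back to the stated coefficients $\norm{\E(Z_i^2\given\calG_0)-\E(Z_i^2)}_{r/2}$, controlling the drift-squared remainder uniformly in $i$, and recombining the two martingale pieces with the drift so that the bound collapses into precisely the four stated terms.
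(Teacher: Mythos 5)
Your opening moves coincide exactly with the paper's: the decomposition $Z_i = \pare[\big]{Z_i - \E(Z_i\given\calG_{i-2})} + \E(Z_i\given\calG_{i-2})$, the $L^1$--Markov treatment of the drift giving $\frac{N}{x}\norm{\E(Z_2\given\calG_0)}_1$, and the even/odd split producing genuine martingale difference sequences. The divergence, and the gap, is in the treatment of the martingale part. The paper does \emph{not} use the classical Burkholder--Rosenthal inequality followed by a separate moment estimate for the conditional variance process; it applies the Merlevède--Peligrad inequality at the exponent $r$ directly to the martingale $\sum_i M_{2i}$, whose last term involves $\norm[\big]{\E\pare[\big]{(\sum_{i\leq k}M_{2i})^2\given\calG_0}}_{r/2}^\delta$. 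Conditional orthogonality of martingale increments linearizes this quantity, $\E\pare[\big]{(\sum_{i\leq k}M_{2i})^2\given\calG_0}=\sum_{i\leq k}\E(M_{2i}^2\given\calG_0)\leq\sum_{i\leq k}\E(Z_{2i}^2\given\calG_0)$, and the triangle inequality gives $\sum_{i\leq k}\norm{\E(Z_{2i}^2\given\calG_0)-\E(Z_{2i}^2)}_{r/2}+k\E(Z_1^2)$; the deterministic piece $k\E(Z_1^2)$, pushed through the weights $\sum_k k^{-1-2\delta/r}(\cdot)^\delta$, regenerates exactly the term $N^{r/2}\E(Z_1^2)^{r/2}/x^r$. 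Thus no moment inequality for the conditional variance process is ever needed.

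Your route instead hinges on the inequality $\E\abs[\big]{\sum_{i\leq N}W_i}^{r/2}\ll N\bracke[\big]{\sum_{k=1}^N k^{-1-2\delta/r}\pare[\big]{\sum_{i=2}^k\norm{\E(W_i\given\calG_0)}_{r/2}}^\delta}^{r/(2\delta)}$, which you attribute to Merlevède and Peligrad. This is not their result, and it is false for $r>4$. Take $(W_i)$ i.i.d., centered, bounded, nondegenerate, with $\calG_j=\sigma(W_i,\,i\leq j+2)$, so that $W_i$ is $\calG_{i-2}$-measurable as in your application. Then $\E(W_i\given\calG_0)=0$ for $i\geq 3$, so the right-hand side is $O(N)$ — even after adding the individual-moment term $N\E\abs{W_1}^{r/2}$ that any such inequality must contain — whereas the left-hand side is of order $N^{r/4}\gg N$ by the Marcinkiewicz--Zygmund inequalities. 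The obstruction is structural: for an exponent $s=r/2>2$, any correct Rosenthal-type bound on $\sum W_i$ must contain a term $N^{s/2}(\E W_1^2)^{s/2}$, equivalently must involve conditional second moments of the partial sums of $W$, i.e., fourth-order projective quantities in $Z$, which cannot be reduced to the four terms of the statement. Only when $2<r\leq 3$ (so $\delta=1$ and $r/2\in(1,3/2]$) is your invoked inequality of the standard von Bahr--Esseen type for exponents in $(1,2)$ and your argument salvageable; for $3<r\leq 4$ it would at best produce the bound with $\delta=1$ in place of $\delta=1/(r-2)$, and for $r>4$ it fails outright. (Your other worry, converting $\norm{\E(W_i\given\calG_0)}_{r/2}$ into $\norm{\E(Z_i^2\given\calG_0)-\E(Z_i^2)}_{r/2}$ plus an absorbable constant, is indeed only bookkeeping: the pointwise bounds $0\leq\E(D_i^2\given\calG_0)\leq\E(Z_i^2\given\calG_0)$ handle it just as in the paper.) This is precisely why the paper routes everything through the Merlevède--Peligrad inequality at exponent $r$ rather than through Burkholder--Rosenthal plus a second inequality at exponent $r/2$.
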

\begin{proof}
Let $M_i = Z_i - \E(Z_i \given \calG_{i-2})$. Then
\begin{equation}
\label{eq:controle_Si}
  \max_{i\leq N} \abs{S_i} \leq
  \max_{2 \leq 2j \leq N} \abs[\Big]{\sum_{i=1}^j M_{2i}}
  + \max_{1 \leq 2j-1\leq N} \abs[\Big]{\sum_{i=1}^j M_{2i-1}} + \sum_{i=1}^N \abs{\E(Z_i \given \calG_{i-2})}.
\end{equation}
If the maximum of the partial sums $S_i$ is at least $x$, one of these three
terms is at least $x/3$.

First, by Markov inequality and stationarity,
\begin{equation*}
  \p\pare[\Big]{\sum_{i=1}^N \abs{\E(Z_i \given \calG_{i-2})} \geq x/3}
  \leq \frac{3}{x} N \norm{\E(Z_2 \given \calG_0)}_1,
\end{equation*}
giving a term compatible with the statement of the theorem. The two other
terms in~\eqref{eq:controle_Si} are controlled similarly, let us consider for
instance the even indices.

We use first Markov inequality with the exponent $r$, and then the
Rosenthal-like inequality~\cite[Theorem~6]{Merlevede_Peligrad_Rosenthal},
giving
\begin{equation*}
  \p\pare[\Big]{\max_{2 \leq 2j \leq N} \abs[\Big]{\sum_{i=1}^j M_{2i}} \geq \frac{x}{3}}
  \leq C \frac{N}{x^{r}}  \pare*{ \norm{M_1}_r^r + \bracke[\Bigg]{\sum_{k=1}^{N/2} \frac{1}{k^{1+2\delta/r}}
  \norm[\Big]{\E\pare[\Big]{\pare[\big]{\sum_{i=1}^k M_{2i}}^2 \given \calG_0}}_{r/2}^\delta}^{r/(2\delta)}}.
\end{equation*}
Since $\norm{M_1}_r^r \leq 2^r \E(\abs{Z_1}^r)$, the resulting term is
compatible with the statement of the theorem. As $M_{2i}$ is a sequence of
martingale differences with respect to $\calG_{2i}$, we have
\begin{equation*}
  \E\pare[\Big]{\pare[\big]{\sum_{i=1}^k M_{2i}}^2 \given \calG_0}
  =\sum_{i=1}^k \E (M_{2i}^2 \given \calG_0)
  \leq \sum_{i=1}^k \E(Z_{2i}^2 \given \calG_0).
\end{equation*}
Therefore, by stationarity,
\begin{equation*}
  \norm[\Big]{\E\pare[\Big]{\pare[\big]{\sum_{i=1}^k M_{2i}}^2 \given \calG_0}}_{r/2}
  \leq \sum_{i=1}^k \norm{\E(Z_{2i}^2 \given \calG_0) - \E(Z_{2i}^2)}_{r/2} + k \E(Z_1^2).
\end{equation*}
We plug this estimate into the previous equation. The first term gives a
contribution as in the statement of the theorem. On the other hand, the
contribution of the second term $k \E(Z_1^2)$ is
\begin{equation*}
  C \frac{N}{x^{r}} \E(Z_1^2)^{\delta \cdot r/(2\delta)} \cdot
  \bracke[\Bigg]{\sum_{k=1}^{N/2} \frac{1}{k^{1+2\delta/r}} k^\delta}^{r/(2\delta)}
  \leq C \frac{N}{x^r} \E(Z_1^2)^{r/2} C' N^{r/2-1}
  = C'' \frac{N^{r/2}}{x^r} \E(Z_1^2)^{r/2},
\end{equation*}
again one of the terms in the statement of the theorem.
\end{proof}

\begin{remark}
Using different Rosenthal inequalities, one can obtain slightly different
statements. For instance, using the classical Rosenthal inequality of
Burkholder for martingales, one obtains a statement analogous to
Theorem~\ref{thm:Rosenthal}, where the last term in the upper bound is
replaced by
\begin{equation}
\label{eq:naive_Rosenthal}
  \frac{N^{r/2}}{x^r} \norm{\E(Z_2^2 \given \calG_0) - \E(Z_2^2)}_{r/2}^{r/2}.
\end{equation}
This statement uses the decorrelation less strongly than
Theorem~\ref{thm:Rosenthal}: For large $i$, the quantity $\norm{\E(Z_i^2
\given \calG_0) - \E(Z_i^2)}_{r/2}$ is likely much smaller than
$\norm{\E(Z_2^2 \given \calG_0) - \E(Z_2^2)}_{r/2}$. Indeed, it turns out
that, for the application below, Theorem~\ref{thm:Rosenthal} will succeed
while an estimate using~\eqref{eq:naive_Rosenthal} fails (compare for
instance~\eqref{eq:oiupoqsf} below to what would be obtained
using~\eqref{eq:naive_Rosenthal}).
\end{remark}

\subsection{Proof of items (1) and (2) in Theorem~\ref{ModDevMC}}

Item (2) in Theorem~\ref{ModDevMC} follows from an application
of~\cite[Proposition~5.1]{Dedecker_Merlevede_deviation_alpha} (while the
result there applies directly to bounded variation functions, the proof works
in the full generality of Theorem~\ref{ModDevMC}). However, as we shall see
it also follows from our proof as a special case.

The strategy of the proof is to apply the Rosenthal bounds of
Theorem~\ref{thm:Rosenthal} to different parts of $\max_{1\leq k \leq n}
\abs[\Big]{\sum_{i=1}^k (f(Y_i) - \pi(f))}$. To illustrate why this strategy
might work, let us recall a way to prove moderate deviations bounds for sums
of centered i.i.d.~random variables $Z_i$ in $L^p$. Consider an integer $n$
and a real number $x>0$. Let $X_i = Z_i 1_{\abs{Z_i}>n^{1/p}} -\E(Z_i
1_{\abs{Z_i}>n^{1/p}})$ and $X'_i = Z_i-X_i$. Then Rosenthal
inequality~\eqref{eq:Rosenthal_iid} (for sums of independent random
variables) with the exponent $p$ applied to $X_i$ gives $\E(\abs{\sum X_i}^p)
\ll n$, while Rosenthal inequality with some exponent $r>p$ applied to $X'_i$
gives $\E(\abs{\sum X'_i}^r) \ll n^{r/2}$. Combining these two inequalities,
we deduce the moderate deviations bound
\begin{equation*}
  \p\pare[\Big]{\abs[\Big]{\sum_{i=1}^n Z_i} \geq x} \ll \frac{n}{x^p} + \frac{n^{r/2}}{x^r}.
\end{equation*}

We will follow the same strategy in our context: split the sum to be
estimated in two different parts, and apply a Rosenthal inequality (in our
case, Theorem~\ref{thm:Rosenthal}) to each part, with suitable exponents.
Instead of truncating, the splitting will be done by constructing blocks, and
separating a conditional average (which is small in $L^1$, but large in
$L^p$, as $X_i$ above) from the dominating term.

Here is a high level version of the (rather technical) proof to follow.
First, we write $\sum_j f(Y_j) - \pi(f)$ as a sum $\sum B_i$, where $B_i$ is
a sum of $f$ along a block of length $n^{1/p}$. Then, we write $B_i$ as
$(B_i-\E(B_i \given \calG_{i-2}^B)) + \E(B_i \given \calG_{i-2}^B)$, where
$\calG_i^B$ is the natural filtration along which $B_i$ is measurable. Then
$\E(B_i \given \calG_{i-2}^B)$ is small in $L^1$, but possibly large in
$L^\infty$. We control the probability of moderate deviations of $\sum \E(B_i
\given \calG_{i-2}^B)$ by grouping these variables into blocks of size
$\asymp x$, then applying Theorem~\ref{thm:Rosenthal}: all the terms in the
upper bound of this theorem can be controlled, in a straightforward albeit
tedious way, by using the assumption \Hun. Then, to control the probability
of moderate deviations of $\sum (B_i-\E(B_i \given \calG_{i-2}^B))$, we
consider separately the sums along even and odd indices, use that each such
sum is a martingale, and apply an exponential inequality for martingales
(here, Freedman inequality). It follows that, to control the probability of
moderate deviations, it suffices to control the deviations of the conditional
quadratic averages. To handle these, we group them again into blocks of size
$\asymp x$ and apply again Theorem~\ref{thm:Rosenthal}. All the terms in the
upper bound of this theorem can also be controlled directly from \Hun.

Below are the details of the proof.

\begin{proof}[Proof of items (1) and (2) in Theorem~\ref{ModDevMC}]
We will use the following notations throughout the proof. Let $f^{(0)} = f -
\pi(f)$ and $M= \norm{f}_{\infty}$ and $\calF_k = \sigma(Y_i , i \leq k )$
and $\E_k (\cdot) = \E (\cdot \, \given \calF_k)$ and $\E^{(0)}_k (\cdot) =
\E (\cdot \, \given \calF_k)-\E(\cdot)$.

Fix $x>0$ and an integer $n$. It suffices to estimate
\begin{equation} \label{eq:8x}
  \p\pare[\Big]{\max_{1 \leq k \leq n} \abs[\Big]{\sum_{i=1}^k (f(Y_i) - \pi(f)
  )} \geq 8x} \,.
\end{equation}
Indeed, if one proves the theorem for this quantity, then the original result
follows by letting $x' = x/8$, as polynomial bounds involving $x'$ or $x$ are
equivalent. From this point on, we concentrate on bounding~\eqref{eq:8x}.

We first notice that since $\norm[\big]{\max_{1 \leq k \leq n}
\abs[\big]{\sum_{i=1}^k f(Y_i) - \pi(f)}}_{\infty} \leq 2 \norm{f}_{\infty}
n$, we can assume that
\begin{equation} \label{R1otherwisetrivial}
x \leq 4^{-1} \norm{f}_{\infty} n \, ,
\end{equation}
otherwise the probability under consideration equals zero. In addition, we
can also assume that
\begin{equation} \label{R2otherwisetrivial}
x \geq 2 \norm{f}_{\infty} n^{1/p} \, ,
\end{equation}
otherwise what we have to prove is trivial as soon as $\kappa$ is greater or
equal to $(16 \norm{f}_{\infty})^p$. So from now on, we assume the two
restrictions above on $x$.

The strategy to prove the desired inequalities is in two steps. First, we
split the sum into blocks of size
\[
\pn = [ n^{1/p}]
\]
as this is the characteristic size when dealing with mixing bounds of
exponent $p$. Then, we write these blocks as sums of a martingale difference
and a remainder. For each of these two terms, we will prove the desired
estimate on the deviation probability using Theorem~\ref{thm:Rosenthal} with
a suitable exponent $r$. While the different sizes of blocks and the
filtrations we will introduce all depend on $n$, we suppress $n$ from the
notations for brevity.

Let
\[
B_{i} = \sum_{j=(i-1)\pn +1}^{i\pn} f^{(0)} (Y_j) \text{ and } X_{i} = \E (B_{i} \given \calF_{(i-2)\pn} ) \, .
\]
Let $\kn = [n/\pn]$ be the number of size $\pn$ blocks. The following
inequality is then valid:
\[
\max_{1 \leq k \leq n} \abs[\Big]{\sum_{i=1}^k (f(Y_i) - \pi(f)} \leq 2\pn \norm{f}_{\infty} +
 \max_{1 \leq j \leq \kn} \abs[\Big]{\sum_{i=1}^j (B_{i} -X_{i} )} +
 \max_{1 \leq j \leq \kn} \abs[\Big]{\sum_{i=1}^j X_{i}} \, .
\]
Since $2\pn \norm{f}_{\infty} \leq 2 n^{1/p} \norm{f}_{\infty} \leq x $, it
follows that
\begin{equation} \label{firstdecomproba}
\begin{split}
  \p\pare[\Big]{\max_{1 \leq k \leq n} \abs[\Big]{\sum_{i=1}^k (f(Y_i) - \pi(f)} \geq 8 x}
  \leq {} & \p\pare[\Big]{ \max_{1 \leq j \leq \kn} \abs[\Big]{\sum_{i=1}^j (B_{i} -X_{i} )} \geq 3 x} \\
  \\ & + \p\pare[\Big]{\max_{1 \leq j \leq \kn} \abs[\Big]{\sum_{i=1}^j X_{i}} \geq 4 x} \, .
\end{split}
\end{equation}
We will control separately these two terms.

\medskip

\emph{First step: controlling $\p\pare[\Big]{\max_{1 \leq j \leq \kn}
\abs[\Big]{\sum_{i=1}^j X_{i}} \geq 4 x}$.}

Consider some $r\in (2(p-1), 2p)$. We will show that
\begin{equation} \label{firstboundX}
\p\pare[\Big]{\max_{1 \leq j \leq \kn} \abs[\Big]{\sum_{i=1}^j X_{i}} \geq 4 x}
\leq \begin{cases}
 \kappa n x^{-p}& \text{if } p>2\\
 \kappa \pare[\big]{n x^{-2} + (n \log n)^{r/2} x^{-r}} & \text{if } p=2 \, ,\\
 \end{cases}
\end{equation}
where $\kappa$ is a positive constant depending only on $p$, $r$, $\norm{f}$,
$C_1$ and $C_2$ but not on $x$ nor $n$.

With this aim, we first let
\[
\qn = \Big [ \frac{x}{2 \norm{f}_{\infty} n^{1/p}}\Big ] \, ,
\]
and we notice that, by~\eqref{R2otherwisetrivial}, $\qn \geq 1$. We will
regroup the $X_i$ into blocks of length $\qn$, which corresponds to blocks of
size $\pn \qn \asymp x$ for $Y_j$: this is the time scale where the sum over
a block can not exceed $x$. By~\eqref{R1otherwisetrivial},
\begin{equation} \label{minorationkn}
\kn \geq \frac{n}{2\pn} \geq \frac{n}{2 n^{1/p}} \geq \frac{4 x}{2 n^{1/p} \norm{f}_{\infty}} \geq 4 \qn \, .
\end{equation}
Define
\[
U_{i} = \sum_{j=(i-1)\qn +1}^{i\qn} X_{j}\, .
\]
It is measurable with respect to $\calG^U_{i} = \sigma (Y_i \, , \, \ell \leq
i \pn \qn -2\pn)$ thanks to the conditional expectation in the definition of
$X_i$. Since $\norm{X_{i}}_{\infty} \leq 2\norm{f}_{\infty} \pn$, we have
\begin{equation*}
 \max_{1 \leq j \leq \kn} \abs[\Big]{\sum_{i=1}^j X_{i}} \leq 2 \norm{f}_{\infty} \pn \qn +
 \max_{1 \leq j \leq [\kn/\qn]} \abs[\Big]{\sum_{i=1}^j U_i}\, .
\end{equation*}
Since $ 2 \norm{f}_{\infty} \pn \qn \leq x$, it follows that
\begin{equation*}
\p\pare[\Big]{\max_{1 \leq j \leq \kn} \abs[\Big]{\sum_{i=1}^j X_{i}} \geq 4 x}
\leq \p\pare[\Big]{\max_{1 \leq j \leq [\kn/\qn]} \abs[\Big]{\sum_{i=1}^j U_i} \geq 3x},
\end{equation*}
which we will control using Theorem~\ref{thm:Rosenthal} applied to $Z_i=U_i$
and $\calG_i = \calG_i^U$ and $N = [\kn/\qn]$ and the exponent $r$. We should
thus show that all the terms in the upper bound of this theorem are
controlled as in~\eqref{firstboundX}.

\medskip

By using \Hun,
\[
 \norm{\E (U_{2} \given \calG^U_{0} )}_1
 \leq \sum_{i=\qn +1}^{2\qn} \sum_{j=(i-1)\pn +1}^{i\pn}
 \norm{\E (f^{(0)} (Y_j) \given \calF_{0} )}_1 \leq C_1 \norm{f^{(0)}}\frac{\qn \pn}{(\qn\pn)^{p-1}} \, .
\]
Note now that $\qn\pn \geq x (8 \norm{f}_{\infty} )^{-1}$. Therefore,
\begin{equation*}
\frac{[\kn/\qn]}{x} \norm{\E (U_{2} \given \calG^U_{0} )}_1
\leq C_1 \norm{f^{(0)}}\frac{\kn/\qn}{x} \frac{\qn \pn}{(\qn\pn)^{p-1}}
\leq C_1 \norm{f^{(0)}} \pare[\big]{8 \norm{f}_{\infty}}^{p-1} n x^{-p} \, .
\end{equation*}
This handles the first term in the upper bound of
Theorem~\ref{thm:Rosenthal}.

\medskip

To control the term involving $\E(\abs{U_{1}}^r)$, we recall that $U_1$ is a
sum of $\qn$ random variables $X_i$, all bounded in sup norm by
$2\norm{f}_{\infty} \pn$. Any precise inequality for the $r$ norm of a sum
will do here. We use for instance~\cite[Theorem~2.5]{rio_book} with $p=r/2$.
It gives
\begin{equation*}
\E(\abs{U_{1}}^r)
  \leq (\qn r)^{r/2} \pare[\Big]{\sum_{i=1}^{\qn} \norm{X_{1}}_\infty \norm{\E_{0} (X_{i})}_{r/2}}^{r/2}\,.
\end{equation*}
Moreover,
\begin{align*}
  \norm{\E_{0} (X_{i})}_{r/2}&
  \leq \sum_{j=(i-1)t+1}^{it} \norm{\E_{0}(\E_{(i-2)\pn} f^{(0)}(Y_j))}_{r/2}
  = \sum_{j=(i-1)t+1}^{it} \norm{\E_{(i-2)\pn \wedge 0}f^{(0)}(Y_j)}_{r/2}
  \\& \leq \sum_{j=(i-1)t+1}^{it} \bracke[\Big]{\norm{\E_{(i-2)\pn \wedge 0}f^{(0)}(Y_j)}_{\infty}^{r/2-1}
    \cdot \norm{\E_{(i-2)\pn \wedge 0}f^{(0)}(Y_j)}_1}^{1/(r/2)}\,.
\end{align*}
The first sup norm is bounded by $2\norm{f}_\infty \leq 2 \norm{f}$, while
the $L^1$ norm is bounded by $C_1 \norm{f^{(0)}}/(\pn \vee (i-1)\pn)^{p-1}$
thanks to \Hun. Hence, $\norm{\E_{0} (X_{i})}_{r/2} \leq \frac{C
\pn}{\pn^{(p-1)/(r/2)}} \frac{1}{(1 \vee (i-1))^{(p-1)/(r/2)}}$, for some
constant $C$. As $r>2(p-1)$, we get a bound
\begin{equation*}
\E(\abs{U_{1}}^r)
\leq C \qn^{r/2} \pare[\Big]{\sum_{i=1}^\qn \pn\cdot \frac{\pn}{\pn^{(p-1)/(r/2)}} \frac{1}{(1 \vee (i-1))^{(p-1)/(r/2)}}}^{r/2}
\leq C'\frac{ (\pn^2 \qn)^{r/2}}{\pn^{(p-1)}} \cdot \qn^{r/2 - (p-1)}\, .
\end{equation*}
Taking into account that $\qn \pn \leq x/(2 \norm{f}_{\infty})$ and $\kn =
[n/t]$, we derive
\begin{equation*}
\frac{[\kn/\qn]}{x^r} \E(\abs{U_{1}}^r) \leq \kappa n x^{-p}\, .
\end{equation*}
This handles the second term in the upper bound of
Theorem~\ref{thm:Rosenthal}.
\medskip

Let us now control the term involving $\E(U_1^2)$. We have
 \begin{align*}
 \E \pare[\big]{U_1^2}
 & = \sum_{j=1}^{\qn} \sum_{\ell=1}^{\qn} \E (X_{j} X_{\ell}) \\
 & = \sum_{j=1}^{\qn} \sum_{\ell=1}^{\qn} \sum_{k=(j-1)\pn +1}^{j\pn} \sum_{m=(\ell-1)\pn +1}^{\ell \pn}
   \E \bracke[\big]{\E_{(j-2)\pn} (f^{(0)} (Y_k) )\cdot \E_{(\ell-2)\pn} (f^{(0)} (Y_m) )} \,.
\end{align*}
Each such term is equal to $\E \bracke[\big]{\E_{(j-2)\pn \wedge (\ell-2)\pn}
(f^{(0)} (Y_k) )\cdot \E_{(j-2)\pn \wedge (\ell-2)\pn} (f^{(0)} (Y_m) )}$. We
bound one of the factors (corresponding to the minimal $j$ or $\ell$) by
$2\norm{f}_\infty$, and use \Hun to bound the other one in terms of the gap
size, which is at least $(\abs{\ell-j}+1)\pn$. Hence,
\begin{equation*}
 \E \pare[\big]{U_1^{2}}
 \leq 2 \norm{f}_\infty  \sum_{j=1}^{\qn} \sum_{\ell=1}^{\qn} \pn^2 \frac{C_1 \norm{f}}{\pn^{p-1} (\abs{j-\ell}+1)^{p-1}}
 \leq \kappa \qn \pn^2 \times \frac{1}{\pn^{p-1}} \pare[\Big]{1 + (\log n) {\bf 1}_{p=2}}
\end{equation*}
as $p \geq 2$, where $\kappa$ is a positive constant. This yields
 \begin{multline*}
 [n_t/u]^{r/2} \E(U_{1}^2)^{r/2} \leq \kappa^{r/2} \pare[\Big]{\frac{n}{\pn \qn} \qn \pn^2 \times \frac{1}{\pn^{p-1}}}^{r/2} \pare[\Big]{1 + (\log n)^{r/2} {\bf 1}_{p=2}} \\
 \leq \kappa^{r/2} \pare[\Big]{n \pn^2 \times \frac{1}{\pn^{p}}}^{r/2} \pare[\Big]{1 + (\log n)^{r/2} {\bf 1}_{p=2}} \leq 2^{rp/2}\kappa^{r/2} n^{r/p} \pare[\Big]{1 + (\log n)^{r/2} {\bf 1}_{p=2}} \, .
\end{multline*}
Using~\eqref{R2otherwisetrivial}, we note that as $r \geq p$ we have $
x^{r-p} \geq 2^{r-p} \norm{f}_{\infty}^{r-p} n^{r/p -1}$. Hence,
\begin{equation*}
\frac{[n_t/u]^{r/2}}{x^{r}} \E(U_{1}^2)^{r/2} \leq
2^{rp/2}\kappa^{r/2} \pare[\Big]{2^{p-r} \norm{f}_{\infty}^{p-r}
\frac{n}{x^p} + \frac{(n \log n)^{r/2}}{x^r} {\bf 1}_{p=2}} \, .
\end{equation*}
This handles the third term in the upper bound of
Theorem~\ref{thm:Rosenthal}.
\medskip

We analyze now the last term in the upper bound of
Theorem~\ref{thm:Rosenthal}. With this aim, we notice that $\calG^U_{0} =
\calF_{-2\pn} $. Therefore, for any $i \geq 2$,
\begin{align*}
 \norm[\Big]{\E \pare[\big]{U^2_{i} \given \calG^U_{0}} - \E \pare[\big]{U^2_{i}}}_{r/2}
 &\leq \sum_{j=(i-1)\qn +1}^{i\qn} \sum_{\ell=(i-1)\qn +1}^{i\qn} \norm[\Big]{\E \pare[\big]{X_{j} X_{\ell} \given \calF_{-2\pn}} - \E \pare[\big]{X_{j} X_{\ell}}}_{r/2}
 \\&\leq 2\sum_{j=(i-1)\qn +3}^{i\qn +2} \sum_{\ell = j}^{i\qn +2} \norm[\Big]{\E^{(0)}_0 \pare[\big]{X_{j} X_{\ell}}}_{r/2}\, .
\end{align*}
Fix $j \in [(i-1)\qn+3, i\qn +2]$ and $\ell \in [j, i\qn +2]$. Then $X_j
X_\ell$ is a sum of $t^2$ terms of the form $\E_{(j-2)\pn} (f^{(0)}
(Y_k))\cdot \E_{(\ell-2)\pn} (f^{(0)} (Y_m))$ for $k\in [(j-1)\pn +1, j\pn]$
and $m \in [(\ell-1)\pn +1, \ell\pn]$. For each such term, writing
$k'=k-(j-2)\pn$ and $m' = m-(j-2)\pn$, we have
\begin{multline*}
  \norm[\big]{\E^{(0)}_0 \bracke[\big]{\E_{(j-2)\pn} (f^{(0)}(Y_k)) \cdot \E_{(\ell-2)\pn} (f^{(0)} (Y_m))}}_{r/2}
  \\
  = \norm*{K^{(j-2)\pn} \bracke[\big]{(K^{k'} f^{(0)}) \cdot (K^{m'} f^{(0)} )}
    - \pi \bracke[\big]{(K^{k'} f^{(0)}) \cdot (K^{m'}f^{(0)})}}_{\pi, r/2} \, .
\end{multline*}
Therefore
\begin{multline*}
\norm[\big]{\E^{(0)}_0 \bracke[\big]{\E_{(j-2)\pn} (f^{(0)}(Y_k)) \cdot \E_{(\ell-2)\pn} (f^{(0)} (Y_m))}}_{r/2}^{r/2} \\
\leq (8\norm{f}_\infty^2)^{r/2-1} \norm*{K^{(j-2)\pn} \bracke[\big]{(K^{k'} f^{(0)}) \cdot (K^{m'} f^{(0)} )}
    - \pi \bracke[\big]{(K^{k'} f^{(0)}) \cdot (K^{m'}f^{(0)})}}_{\pi, 1} \, .
\end{multline*}
Both functions $K^{k'} (f^{(0)} )$ and $K^{m'}(f^{(0)})$ belong to $\calB$,
with a norm bounded by $C_2 \norm{f^{(0)}}$ thanks to the condition \Hdeux.
As $\norm{\cdot}$ is a Banach algebra norm, their product also belongs to
$\calB$. Applying the condition \Hun to this product, we deduce that
\[
  \norm*{K^{(j-2)\pn} \bracke[\big]{(K^{k'} f^{(0)}) \cdot (K^{m'} f^{(0)} )}
    - \pi \bracke[\big]{(K^{k'} f^{(0)}) \cdot (K^{m'}f^{(0)})}}_{\pi, 1}
    \leq \frac{C_3}{((j-2)\pn)^{p-1}}\, ,
\]
for some constant $C_3$. Combining these inequalities yields
\begin{equation*}
  \norm[\Big]{\E^{(0)}_0 \pare[\big]{X_{j} X_{\ell}}}_{r/2}
  \leq \frac{t^2 C_4}{((j-2)\pn)^{(p-1)/(r/2)}}\, .
\end{equation*}
Therefore, we get that for any $i \geq 2$,
\[
 \norm[\Big]{\E \pare[\big]{U^2_{i} \given \calG_{0}} - \E \pare[\big]{U^2_{i}}}_{r/2}
 \leq C_5 \frac{(\qn \pn)^2}{(i\qn\pn)^{2(p-1)/r}} \, .
\]
As $r > 2(p-1)$, this implies that
\[
 \sum_{i=2}^k \norm[\Big]{\E \pare[\big]{U^2_{i} \given \calG_{0}} - \E \pare[\big]{U^2_{i}}}_{r/2}
 \leq C_6 \frac{(\qn \pn)^2}{(\qn\pn)^{2(p-1)/r}} k^{1 - 2(p-1)/r} \, .
\]
Hence
\begin{multline*}
 [\kn/\qn] \bracke*{\sum_{k=1}^{[\kn /\qn]} \frac{1}{k^{1 +2 \delta/r}}
   \pare[\Big]{\sum_{i=2}^k \norm[\Big]{\E \pare[\big]{U^2_{i} \given \calG^U_{0}} - \E \pare[\big]{U^2_{i}}}_{r/2}}^{\delta}}^{r/(2 \delta)} \\
 \leq C_6^{r/2} \frac{(\qn \pn)^{r}}{(\qn\pn)^{p-1}} \frac{\kn}{\qn}
   \bracke*{\sum_{k=1}^{[\kn /\qn]} \frac{1}{k^{1 +2 \delta/r}} \pare[\Big]{k^{1 - 2(p-1)/r}}^{\delta}}^{r/(2 \delta)} \, .
\end{multline*}
As $r < 2p$, the sum over $k$ is uniformly bounded, independently of $n$ or
$x$. Taking into account that $\qn \pn \leq x/(2 \norm{f}_{\infty})$, we get
that there exists a positive constant $\kappa$ such that
\begin{equation}
\label{eq:oiupoqsf}
  \frac{[\kn/\qn]}{x^r} \bracke*{\sum_{k=1}^{[\kn /\qn]} \frac{1}{k^{1 +2 \delta/r}}
    \pare[\Big]{\sum_{i=2}^k \norm[\big]{\E \pare[\big]{U^2_{i} \given \calG^U_{0}} - \E \pare{U^2_{i}}}_{r/2}}^{\delta}}^{r/(2 \delta)}
  \leq \kappa n x^{-p} \, .
\end{equation}
This handles the last term in the upper bound of Theorem~\ref{thm:Rosenthal}.
Altogether, this proves~\eqref{firstboundX} and concludes the proof of the
first step.

\bigskip

\emph{Second step: controlling $\p\pare[\Big]{\max_{1 \leq j \leq \kn}
\abs[\Big]{\sum_{i=1}^j B_i-X_{i}} \geq 4 x}$.}

We will prove
\begin{equation}
\label{firstboundBmoinsX}
\p\pare[\Big]{\max_{1 \leq k \leq \kn}
\abs[\Big]{\sum_{i=1}^k (B_{i} - X_{i} )} \geq 3 x} \leq
\begin{cases}
 \kappa n x^{-p} + \kappa \exp(-\kappa^{-1} x^2/n)& \text{if } p>2\\
 \kappa n x^{-2} + \kappa \exp(-\kappa^{-1} x^2/(n\log n))& \text{if } p=2 \, ,\\
 \end{cases}
\end{equation}
where $\kappa$ is a positive constant depending only on $p$, $\norm{f}$,
$C_1$ and $C_2$ but not on $x$ nor $n$. Starting
from~\eqref{firstdecomproba}, this upper bound combined
with~\eqref{firstboundX} will end the proof of Items 1 and 2 of the theorem.

To prove~\eqref{firstboundBmoinsX}, we start by setting
\[
d_{i} = B_{i} - X_{i} \, \text{ and } \, \calG^B_{i}= \F_{i\pn} \, ,
\]
and we write the following decomposition:
\begin{multline} \label{decBXwithmartingales}
\p\pare[\Big]{\max_{1 \leq k \leq \kn} \abs[\Big]{\sum_{i=1}^k (B_{i} - X_{i} )} \geq 3 x}
\\ \leq \p\pare[\Big]{\max_{1 \leq 2k \leq \kn} \abs[\Big]{\sum_{i=1}^k d_{2i}} \geq 3 x /2} +
\p\pare[\Big]{\max_{1 \leq 2k -1 \leq \kn} \abs[\Big]{\sum_{i=1}^k d_{2i -1}} \geq 3 x /2} \, .
\end{multline}

Note that $(d_{2i})_{i \in {\mathbb Z}}$ (resp. $(d_{2i-1})_{i \in {\mathbb
Z}}$) is a strictly stationary sequence of martingale differences with
respect to the non decreasing filtration $(\calG^B_{2i})_{i \in {\mathbb Z}}$
(resp. $(\calG^B_{2i-1})_{i \in {\mathbb Z}}$). Therefore, since
$\norm{d_{2i}}_{\infty} \leq 2\pn \norm{f}_{\infty} \leq 2 \norm{f}_{\infty}
n^{1/p}$ a.s., by~\cite[Proposition 2.1]{freedman}, for any $\yn>0$,
\begin{multline} \label{AppliFreedman}
\p\pare[\Big]{\max_{2 \leq 2k \leq \kn} \abs[\Big]{\sum_{i=1}^k d_{2i}} \geq 3x/2} \leq 2 \exp \pare[\Big]{\frac{- 9 x^2}{16 \yn}} +
2 \exp \pare[\Big]{\frac{- 9 x}{16 \norm{f}_{\infty} n^{1/p}}} \\
+ \p\pare[\Big]{\sum_{i=1}^{[\kn/2]} \E \pare[\big]{d^2_{2i} \given \calG^B_{2(i-1)}} \geq \yn} \, .
\end{multline}
Note now that
\[
\E \pare[\big]{d^2_{2i} \given \calG^B_{2(i-1)}} \leq \E \pare[\big]{B^2_{2i} \given \calG^B_{2(i-1)}} \, .
\]
Moreover, by stationarity, we infer that
\[
\sum_{i=1}^{[\kn/2]} \E \pare{B^2_{2i}} \leq 2 n \norm{f}_{\infty} \sum_{k =0}^{\pn -1} \norm{\E_0 (f^{(0)} (Y_k)}_1 \, .
\]
Therefore, by \Hun, there exists a positive constant $\kappa$ depending only
on $p$, $\norm{f}$ and $C_1$ such that
\[
\sum_{i=1}^{[\kn/2]} \E \pare{B^2_{2i}} \leq \kappa n (1 + (\log n) {\bf 1}_{p=2}) ) \, .
\]
Selecting
\begin{equation*}
\yn = \begin{cases}
 \max \pare[\Big]{2 \kappa n , 16 x n^{1/p} \norm{f}_{\infty}}& \text{if } p>2\\
 \max \pare[\Big]{2 \kappa n \log n , 16 x (n \log n) ^{1/2} \norm{f}_{\infty}}& \text{if } p=2 \, ,\\
 \end{cases}
\end{equation*}
and starting from~\eqref{AppliFreedman}, we get that, for any $r \geq 1$,
\begin{multline} \label{AppliFreedman2}
\p\pare[\Big]{\max_{2 \leq 2k \leq \kn} \abs[\Big]{\sum_{i=1}^k d_{2i}} \geq 3x/2}
  \leq c \frac{n}{x^p} + c \exp\pare[\Big]{-c' \frac{x^2}{n+ n \log n {\bf 1}_{p=2}}}
\\
+ \p\pare[\Big]{\sum_{i=1}^{[\kn/2]} \pare[\big]{\E \pare[\big]{B^2_{2i} \given \calG^B_{2(i-1)}} - \E \pare{B^2_{2i}}} \geq \yn /2} \, ,
\end{multline}
where $c$ and $c'$ are positive constants.

Let us prove now that
\begin{equation} \label{firstboundBmoinsXB}
 \p\pare[\Big]{\abs[\Big]{\sum_{i=1}^{[\kn/2]} \pare[\Big]{\E \pare[\big]{B^2_{2i} \given \calG^B_{2(i-1)}}
   - \E \pare{B^2_{2i}}}} \geq \yn/2}
 \leq c n x^{-p} \, ,
\end{equation}
where $c$ is a positive constant depending only on $p$, $\norm{f}$, $C_1$ and
$C_2$ but not on $x$ nor $n$. A similar bound will hold for odd indices.
Hence, starting from~\eqref{decBXwithmartingales} and considering the
inequality~\eqref{AppliFreedman2}, this upper bound will lead
to~\eqref{firstboundBmoinsX} and then will end the proof of Items 1 and 2 of
the theorem.

It remains then to prove~\eqref{firstboundBmoinsXB}. With this aim, we do
again blocks of size $\qn$ with as before $\displaystyle \qn = \Big [
\frac{x}{2 \norm{f}_{\infty} n^{1/p}}\Big ] $. Let
\[
W_{i} = \E \pare[\big]{B^2_{2i} \given \calG^B_{2(i-1)}} - \E \pare{B^2_{2i}} \,,\quad  V_{i} = \sum_{k=(i-1)\qn +1}^{i\qn} W_{k}
\]
and $ \calG^V_{i} = \calG^B_{2(i\qn-1)} = \calF_{2(i \qn-1)\pn}$. Setting
$\displaystyle \nnu = \Big [ \frac{[\kn/2]}{\qn}\Big ]$ (note that,
by~\eqref{minorationkn}, $\nnu \geq 1$), we have
\begin{equation*}
\abs[\Big]{\sum_{i=1}^{[\kn/2]} \pare[\big]{\E \pare[\big]{B^2_{2i} \given \calG^B_{2(i-1)}} - \E \pare{B^2_{2i}}}} = \abs[\Big]{\sum_{i=1}^{[\kn/2]} W_{i}}
\leq \abs[\Big]{\sum_{i=1}^{\nnu} V_{i}} + 8 \qn \pn^2 \norm{f}_{\infty}^2\, .
\end{equation*}
Note that
\[
 8 \qn \pn^2 \norm{f}_{\infty}^2 \leq 4 x n^{1/p} \norm{f}_{\infty}
 \leq \yn/4\, .
\]
Therefore
\begin{equation*}
 \p\pare[\Big]{\abs[\Big]{\sum_{i=1}^{[\kn/2]} \pare[\Big]{\E \pare[\big]{B^2_{2i} \given \calG^B_{2(i-1)}} - \E \pare{B^2_{2i}}}}\geq \yn/2}
 \leq \p\pare[\Big]{\abs[\Big]{\sum_{i=1}^{\nnu} V_{i}} \geq \yn/4}\, .
\end{equation*}
To prove~\eqref{firstboundBmoinsXB}, it suffices to show that
\begin{equation} \label{firstboundBmoinsXB2}
 \p\pare[\Big]{\abs[\Big]{\sum_{i=1}^{\nnu} V_{i}} \geq \yn/4}\leq c n x^{-p}\, .
\end{equation}
We will show this inequality by applying Theorem~\ref{thm:Rosenthal} to
$Z_i=V_i$ and $\calG_i = \calG^V_{i}$ and $N = \nnu$ and some fixed $r \in
(2p-2, 2p)$. We should thus show that all the terms in the upper bound of
this theorem are controlled as in~\eqref{firstboundBmoinsXB2}.

\medskip

We start with the first term involving $\norm{\E(V_2 \given \calG^V_0)}_1$.
Since $\yn \geq 16 x n^{1/p} \norm{f}_{\infty} $, we have
\begin{align*}
  \frac{\nnu}{\yn} \norm{\E(V_2 \given \calG^V_0)}_1
 &\leq \frac{\nnu}{x n^{1/p} \norm{f}_{\infty}} \sum_{k=\qn +1}^{2\qn} \norm{\E (W_{k} \given \calF_{-2\pn})}_1 \\&
 \leq \frac{\nnu}{x n^{1/p} \norm{f}_{\infty}} \sum_{k=\qn +1}^{2\qn} \norm{\E (B^2_{2k} \given \calF_{-2\pn}) - \E (B^2_{2k} )}_1 \\&
 \leq \frac{2\nnu}{x n^{1/p} \norm{f}_{\infty}} \sum_{k=\qn +1}^{2\qn} \sum_{j=(2k-1)\pn +1}^{2k\pn}
   \sum_{\ell=j}^{2k\pn} \norm{\E^{(0)} (f^{(0)} (Y_j) f^{(0)} (Y_{\ell}) \given \calF_{-2\pn})}_1 \, .
\end{align*}
Using \H, we infer that there exists a positive constant $c$ depending on
$C_1$, $C_2$ and $\norm{f^{(0)}}$, such that this quantity is bounded by
\[
  c \frac{n}{x n^{1/p} \pn \qn \norm{f}_{\infty}} \qn \pn^2 \frac{1}{(\qn \pn)^{p-1}}
  \leq c \frac{n}{x \norm{f}_{\infty}} \frac{1}{(\qn \pn)^{p-1}}
  \leq 8^{p-1} c \norm{f}_{\infty}^{p-2} n x^{-p} \, ,
\]
thanks to the inequality $\qn \pn \geq x (8 \norm{f}_{\infty} )^{-1}$. This
handles the first term in the upper bound of Theorem~\ref{thm:Rosenthal}.

\medskip

We turn to the second term, involving $\E(\abs{V_1}^r)$. By stationarity
and~\cite[Theorem~2.5]{rio_book}, we have
\begin{align*}
  \E(\abs{V_1}^r)^{2/r}
    & = \norm[\Big]{\sum_{k=1}^{\qn} W_{k}}_r^2 \leq \qn r \sum_{k=0}^{\qn-1} \norm{W_{0} \E (W_{k} \given \calF_{-2\pn} )}_{r/2}
  \\& \leq \qn r \norm{W^2_{0}}_{r/2} + \qn r \norm{W_{0}}_{\infty}\sum_{k=1}^{\qn-1} \norm{\E (W_{k} \given \calF_{-2\pn} )}_{r/2}
  \\& \leq \qn r \norm{W^2_{0}}_{r/2} + \qn r \cdot 16 \pn^2 \norm{f}^2_{\infty} \sum_{k=1}^{\qn-1} \norm{\E (W_{k} \given \calF_{-2\pn} )}_{r/2} \, .
\end{align*}
Using \H, we infer that there exists a positive constant $c_4$ depending on
$C_1$, $C_2$, $\norm{f}$ and $r$ such that
\begin{align*}
  \norm{W^2_{0}}_{r/2}
  &= \norm{W_{0}}^2_{r}
  = \norm{\E_0 \pare[\big]{B^2_{2}} - \E (B^2_{2} )}^2_{r} \\&
  \leq \pare[\Big]{\sum_{j=\pn +1}^{2 \pn} \sum_{i=\pn +1}^{2 \pn} \norm{\E^{(0)}_0 (f^{(0)} (Y_j) f^{(0)} (Y_{i}) )}_r}^2
  \\&
  \leq c_4 \pare[\Big]{\pn^2 \frac{1}{\pn^{(p-1)/r}}}^2
  \leq c_4 \frac{\qn}{\qn^{2(p-1)/r}} \cdot \frac{\pn^4}{\pn^{2(p-1)/r}}
  \, ,
\end{align*}
as $r>2(p-1)$. On the other hand, using again \H, we get that there exists a
positive constant $c_5$ such that for any $k \geq 1$,
\begin{align*}
  \norm{\E (W_{k} \given \calF_{-2\pn} )}_{r/2}
  \leq \sum_{j=(2k-1)\pn +1}^{2k \pn} \sum_{i=(2k-1)\pn +1}^{2k \pn} \norm{\E^{(0)}_{-2\pn} (f^{(0)} (Y_j) f^{(0)} (Y_{i})))}_{r/2}
  \leq c_5 \pn^2 \frac{1}{(k \pn)^{2(p-1)/r}} \, .
\end{align*}
The sum of these quantities over $k$ from $1$ to $u-1$ is bounded by $c_6
\frac{t^2}{t^{2(p-1)/r}} \cdot \frac{u}{u^{2(p-1)/r}}$, as $r > 2(p-1)$. We
infer that there exists a positive constant $c_7$ such that
\[
\nnu \E(\abs{V_1}^r)\leq c_7 \nnu (\qn^2 \pn^4)^{r/2} \frac{1}{(\qn \pn)^{p-1}} \leq c_7 n \pn^r (\qn \pn)^{r-p} \, .
\]
Hence, using the fact that $\yn \geq 16 x n^{1/p} \norm{f}_{\infty} $ and
$\qn \pn \leq x (2 \norm{f}_{\infty})^{-1}$ and $\pn \leq n^{1/p}$, we get
that
\begin{equation*}
  \frac{\nnu}{\yn^{r}} \E(\abs{V_1}^r)
  \leq 8^{-r} c_7 (2 \norm{f}_{\infty})^{p-2r}\frac{n}{x^p} \, .
\end{equation*}
This handles the second term in the upper bound of
Theorem~\ref{thm:Rosenthal}.

\medskip

We turn to the third term, involving $\E(V_1^2)$. By stationarity, we have
\[
\E(V_1^2) = \norm[\Big]{\sum_{k=\qn +1}^{2\qn} W_{k}}_2^2 = \qn \norm{W_{1}}^2_2 + 2 \sum_{k=1}^{\qn-1} \sum_{\ell=1}^{\qn-k} \cov \pare[\big]{W_{0} , W_{\ell }} \, .
\]
But, by using \H, we infer that there exists a positive constant $c_1$ such
that
\begin{equation*}
  \norm{W_{1}}_2
  = \norm{\E^{(0)}_0 (B^2_{2} )}_2
  \leq \sum_{j=\pn +1}^{2\pn} \sum_{\ell=\pn +1}^{2\pn} \norm{\E^{(0)}_0 (f^{(0)} (Y_j) f^{(0)} (Y_{\ell}))}_2
  \leq c_1 \frac{\pn^2}{\pn^{(p-1)/2}} \, .
\end{equation*}
On the other hand, using again \H, we get that there exists a positive
constant $c_2$ such that for any $\ell \geq 1$,
\begin{align*}
 \abs[\Big]{\cov \pare[\big]{W_{0} , W_{\ell }}}
 &\leq \norm{B^2_{0}}_{\infty} \norm{\E_{-2\pn} (B^2_{2\ell} ) - \E (B^2_{2\ell} )}_1
 \\&
 \leq (8\pn \norm{f})^2 \sum_{j=(2 \ell-1)\pn +1}^{2 \ell \pn} \sum_{i=(2 \ell-1)\pn +1}^{2 \ell \pn} \norm{\E^{(0)}_0 (f^{(0)} (Y_j) f^{(0)} (Y_{i}))}_1
 \leq c_2 \frac{\pn^4}{(\ell \pn)^{p-1}} \, .
\end{align*}
So, overall, there exists a positive constant $c_3$ such that
\[
  \E(V_1^2) \leq c_3 \qn \frac{\pn^4}{\pn^{p-1}} \pare[\big]{1 + (\log n ) {\bf 1}_{p=2}}\, .
\]
This upper bound implies that
\[
  \pare[\Big]{\nnu \E(V_1^2)}^{r/2} \leq (2^p c_3)^{r/2} n^{2r/p} \pare[\big]{1 + (\log n )^{r/2} {\bf 1}_{p=2}} \, .
\]
Next using the fact that $\yn \geq 16 x n^{1/p} \norm{f}_{\infty} $ if $p>2$
and $\yn \geq 16 x (n \log n) ^{1/2} \norm{f}_{\infty}$ if $p=2$, we get
\[
\frac{\nnu^{r/2}}{\yn^{r}} \E(V_1^2)^{r/2} \leq (2^p c_3)^{r/2} \frac{n^{r/p}}{16^r x^r \norm{f}^r_{\infty}} \, .
\]
By~\eqref{R2otherwisetrivial} and since $r \geq p$, we have $x^{r-p} \geq (2
\norm{f}_{\infty})^{r-p} n^{r/p -1}$. Therefore,
\begin{equation*}
  \frac{\nnu^{r/2}}{\yn^{r}} \E(V_1^2)^{r/2}
  \leq (2^{p-6} c_3)^{r/2} (2 \norm{f}_{\infty})^{p-2r} \frac{n}{x^p} \, .
\end{equation*}
This handles the third term in the upper bound of
Theorem~\ref{thm:Rosenthal}.

\medskip

Finally, we turn to the last term, involving $\norm{\E (V_{i}^2 \given
\calG_{0}^V ) - \E (V_{i}^2 )}_{r/2}$. For any $i \geq 2$, we have
\begin{align*}
  \norm{\E (V_{i}^2 \given \calG_{0}^V ) - &\E (V_{i}^2 )}_{r/2}
  \leq \sum_{\ell=(i-1)\qn +1}^{i\qn} \sum_{m=(i-1)\qn +1}^{i\qn} \norm{\E_{-2\pn}^{(0)} (W_{\ell} W_{m} )}_{r/2} \\
  \\&\leq \sum_{\ell=(i-1)\qn +1}^{i\qn} \sum_{m=(i-1)\qn +1}^{i\qn}
     \norm*{\E_{-2\pn}^{(0)} \bracke[\Big]{\E \pare[\big]{B^2_{2\ell} \given \calG^B_{2(\ell-1)}} \cdot \E \pare[\big]{B^2_{2m} \given \calG^B_{2(m-1)}}}}_{r/2} \\
  \\& \hspace{1cm} + 2 \sum_{\ell=(i-1)\qn +1}^{i\qn} \sum_{m=(i-1)\qn +1}^{i\qn} \E(B^2_{2\ell})\cdot \norm{\E_{-2\pn}^{(0)} (B^2_{2m})}_{r/2} \, ,
\end{align*}
where this expansion is obtained from the definition $W_i = \E
\pare[\big]{B^2_{2i} \given \calG^B_{2(i-1)}} - \E \pare{B^2_{2i}}$ by expanding the
product $W_\ell W_m$, using the fact that $\E_{-2\pn}^{(0)}$ is linear and
vanishes on the constant $\E(B^2_{2\ell}) \cdot \E(B^2_{2m})$.

For any $ m \geq \ell \geq 1$,
\begin{align*}
 \norm[\Big]{&\E_{-2\pn}^{(0)} \bracke[\Big]{\E \pare[\big]{B^2_{2\ell} \given \calG^B_{2(\ell-1)}} \cdot \E \pare[\big]{B^2_{2m} \given \calG^B_{2(m-1)}}}}_{r/2}
 \\&= \norm[\Big]{\E_{-2\pn}^{(0)} \bracke[\Big]{\E \pare[\big]{B^2_{2\ell} \given \calG^B_{2(\ell-1)}} \cdot \E \pare[\big]{B^2_{2m} \given \calG^B_{2(\ell -1)}}}}_{r/2}
\\& \leq \sum_{a,a'=(2\ell-1)\pn +1}^{2\ell\pn} \sum_{b,b'=(2m-1)\pn +1}^{2m \pn}
  \norm[\Big]{\E^{(0)}_{-2\pn} \bracke[\Big]{\E_{2(\ell-1)\pn} \pare[\big]{f^{(0)} (Y_a) f^{(0)} (Y_{a'})} \\ &\hspace{6cm}\times \E_{2(\ell-1)\pn} \pare[\big]{f^{(0)} (Y_b) f^{(0)} (Y_{b'})}}}_{r/2}
\\& \leq 2 \pn^2 \sup_{a', b' \geq 0} \sum_{a=(2\ell+1)\pn +1}^{2(\ell+1)\pn}\sum_{b=(2m+1)\pn +1}^{2(m+1)\pn}
  \norm[\Big]{\E^{(0)}_{0} \bracke[\Big]{\E_{2\ell\pn} \pare[\big]{f^{(0)} (Y_a) f^{(0)} (Y_{a+a'})} \\&\hspace{6cm}\times \E_{2\ell\pn} \pare[\big]{f^{(0)} (Y_b) f^{(0)} (Y_{b+b'})}}}_{r/2} \, ,
\end{align*}
where we have used stationarity. But
\[
 \E_{2\ell\pn} \pare[\big]{f^{(0)} (Y_a) f^{(0)} (Y_{a+a'})} = \E_{2\ell\pn} \pare[\big]{(f^{(0)} K^{a'} f^{(0)} ) (Y_a)} = (K^{a- 2\ell\pn} (f^{(0)} K^{a'} f^{(0)} ) ) (Y_{2\ell \pn}) \, .
\]
Hence,
\begin{multline*}
 \E_{0} \bracke[\Big]{\E_{2\ell\pn} \pare[\big]{f^{(0)} (Y_a) f^{(0)} (Y_{a+a'})} \cdot \E_{2\ell\pn} \pare[\big]{f^{(0)} (Y_b) f^{(0)} (Y_{b+b'})}} \\
 = \pare[\Big]{K^{2\ell\pn} \bracke[\Big]{K^{a- 2\ell\pn} (f^{(0)} K^{a'} f^{(0)} ) \cdot K^{b- 2\ell\pn} (f^{(0)} K^{b'} f^{(0)})}}(Y_{0}) \, .
\end{multline*}
Therefore, thanks to \H, we infer that there exists a positive constant $c_8$
such that for any $ m \geq \ell \geq (i-1)\qn +1$,
\begin{equation*}
 \norm[\Big]{\E_{-2\pn}^{(0)} \bracke[\Big]{\E \pare[\big]{B^2_{2\ell} \given \calG^B_{2(\ell-1)}}\cdot \E \pare[\big]{B^2_{2m} \given \calG^B_{2(m-1)}}}}_{r/2} \\
 \leq c_8 \frac{\pn^4}{((i-1)\pn\qn)^{2(p-1)/r}}\, .
\end{equation*}
On the other hand, using again \H, we infer that there exists a positive
constant $c_9$ such that for any $\ell, m \geq (i-1)\qn +1$,
\[
 \E (B^2_{2\ell}) \cdot \norm{\E_{-2\pn}^{(0)} (B^2_{2m})}_{r/2} \leq c_9 \pn^2 \cdot \frac{\pn^2}{((i-1)\pn\qn)^{2(p-1)/r}} \, .
\]
So, overall, as $r > 2(p-1)$, there exists a positive constant $c_{10}$ such
that
\[
\sum_{i=1}^k \norm{\E (V_{i}^2 \given \calG_{0}^V ) - \E (V_{i}^2 )}_{r/2} \leq c_{10} \frac{\qn^2\pn^4}{(\pn \qn)^{2(p-1)/r}} \frac{k}{k^{2(p-1)/r}} \, .
\]
Therefore, as in addition $r < 2p$, there exists a positive constant $c_{11}$
such that
\[
 \bracke*{\sum_{k=1}^{\nnu} \frac{1}{k^{1 + 2 \delta /r}} \pare[\Big]{\sum_{i=1}^k \norm{\E (V_{i}^2 \given \calG_{0}^V ) - \E (V_{i}^2 )}_{r/2}}^{\delta}}^{r/(2 \delta)}\leq c_{11}
\frac{(\qn\pn^2)^{r}}{(\pn \qn)^{p-1}} \, .
\]
Using, the fact that $\yn \geq 16 x n^{1/p} \norm{f}_{\infty} $, $\qn \pn
\leq x (2 \norm{f}_{\infty})^{-1}$ and $\pn \leq n^{1/p}$, this implies that
\begin{equation*}
  \frac{\nnu}{\yn^r} \bracke*{\sum_{k=1}^{\nnu} \frac{1}{k^{1 + 2 \delta /r}}
    \pare[\Big]{\sum_{i=1}^k \norm{\E (V_{i}^2 \given \calG_{0}^V ) - \E (V_{i}^2)}_{r/2}}^{\delta}}^{r/(2 \delta)}
 \leq 8^{-r} c_{11} (2 \norm{f}_{\infty})^{p-2r} \frac{n}{x^p}
 \, .
\end{equation*}
This handles the last term in the upper bound of Theorem~\ref{thm:Rosenthal}.
Altogether, this proves~\eqref{firstboundBmoinsXB2}. This concludes the
second step, and therefore the proof of Items~1 and~2 of the theorem.
\end{proof}

\section{Lower bounds in moderate deviations: three examples}

\label{sec:counterexamples}

In this section, we exhibit several examples of Markov chains satisfying \H
(for different  norms) for which one can prove a lower bound for
the deviation probability of some particular observables. This shows that the
upper bounds given in Theorem~\ref{ModDevMC} cannot be essentially improved.

\subsection{Discrete Markov chains}

Let $p>1$. We consider a simple renewal type Markov chain on $\N$, jumping
from $0$ to $n>0$ with probability $p_{0,n} \coloneqq 1/(\zeta(p+1)n^{p+1})$
and from $n>0$ to $n-1$ with probability $1$. This Markov chain has an
invariant probability measure $\pi$ given by $\pi\{n\} = \sum_{i\geq n}
d/i^{p+1}$ for $n>0$ and $\pi\{0\} = \pi\{1\}$, where $d>0$ is chosen so that
$\pi$ is of mass $1$.

This Markov chain satisfies \H for the norm $\norm{f} = \norm{f}_\infty$.
Indeed, in this case,
\[
 \pi \pare[\Big]{\sup_{\norm{f}_{\infty} \leq 1} \abs{K^n(f) - \pi (f)}}
   \leq C_1 \sum_{j \geq n} \sum_{k \geq j+1} p_{0,k} \leq C_2 n^{1-p}
\]
(see~\cite{davydov_MC} or Chapter 30 in~\cite{bradley_vol3} for more
details).

Define a function $f$ by $f(n) = \pi\{0\} - \un_{n=0}$. Its average under
$\pi$ vanishes.
\begin{proposition}
\label{prop:deviation_f_lower} Let $(Y_i)_{i \in \N}$ be a stationary Markov
chain with transition kernel described above, for some $p>1$. There exists
$\kappa>0$ such that, for any $n\in \N^*$ and any $x \in [\kappa n^{1/p},
\kappa^{-1} n]$,
\begin{equation*}
  \p\pare*{\sum_{i=0}^{n-1} f(Y_i) \geq x} \geq \kappa^{-1} \frac{n}{x^p}.
\end{equation*}
\end{proposition}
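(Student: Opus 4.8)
The plan is to reduce the statement to a one-big-jump lower bound for the renewal process of returns to $0$. Conditioning on $\{Y_0=0\}$, which costs only a constant factor $\pi\{0\}>0$ that can be absorbed into $\kappa$ at the end, the successive return times $\tau_1,\tau_2,\dotsc$ to $0$ become i.i.d., each distributed as $H+1$ with $\p(H=k)=p_{0,k}$. In particular $\tau\ge 1$, $\E\tau=1/\pi\{0\}$ (the return-time identity for a positive recurrent chain), and $\p(\tau\ge t)\sim c\,t^{-p}$ with $c>0$ coming from $\sum_{k\ge t}p_{0,k}\asymp t^{-p}$. Writing $N_0=\#\{0\le i\le n-1:Y_i=0\}$ and $S_k=\tau_1+\dots+\tau_k$ (with $S_0=0$), one has $\sum_{i=0}^{n-1}f(Y_i)=n\pi\{0\}-N_0$, and since the $S_k$ are strictly increasing, $\{N_0\le m\}=\{S_m\ge n\}$ for every integer $m\ge 0$. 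Taking $m=\lfloor n\pi\{0\}-x\rfloor$ gives $\{S_m\ge n\}\subseteq\{\sum_{i=0}^{n-1}f(Y_i)\ge x\}$, so it suffices to bound $\p(S_m\ge n)$ below by $c\,nx^{-p}$. Note that $\E S_m=m/\pi\{0\}\le n-x/\pi\{0\}$, so $\{S_m\ge n\}$ asks $S_m$ to exceed its mean by $y:=n-\E S_m$, and $y\asymp x$.

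For the one-big-jump bound I would use Bonferroni. Set $A_i=\{\tau_i\ge \E\tau+y\}$ and $B_i=\{\sum_{j\ne i}(\tau_j-\E\tau)\ge 0\}$; then $A_i\cap B_i\subseteq\{S_m\ge \E S_m+y\}=\{S_m\ge n\}$, and $A_i$, $B_i$ are independent. Hence
\[
\p(S_m\ge n)\ge \sum_{i=1}^m \p(A_i)\p(B_i)-\sum_{i<j}\p(A_i\cap A_j).
\]
Using the tail bound, $\p(A_i)=\p(\tau\ge \E\tau+y)\ge c_1 x^{-p}$ (legitimate since $y\asymp x$ and $x\ge\kappa n^{1/p}\to\infty$, so the uniform lower tail $\p(\tau\ge t)\ge c_1' t^{-p}$ applies), while $m\ge\tfrac12 n\pi\{0\}$ because $x\le\kappa^{-1}n$ keeps $m$ of order $n$. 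Writing $c_0=\inf_k\p(S_k\ge\E S_k)$, the first sum is at least $c_0\,m\,\p(A_1)\ge c_2\,nx^{-p}$, and the second is at most $\binom{m}{2}\p(A_1)^2\le\tfrac12(m\p(A_1))^2$. Since $m\p(A_1)\le C\,nx^{-p}\le C\kappa^{-p}$, choosing $\kappa$ large forces $m\p(A_1)\le c_0$, so the second sum is at most half the first, giving $\p(S_m\ge n)\ge\tfrac12 c_0\,m\,\p(A_1)\ge \kappa^{-1}nx^{-p}$.

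The crux is the positivity of $c_0=\inf_k\p(S_k\ge\E S_k)$, i.e. that a sum of $k$ i.i.d.\ copies of $\tau$ exceeds its mean with probability bounded away from $0$ uniformly in $k$. Each individual term is strictly positive, and $\p(S_k\ge\E S_k)$ converges to a positive limit: to $1/2$ by the central limit theorem when $p>2$, and to $\p(W\ge 0)\in(0,1)$ for the spectrally positive $p$-stable (respectively Gaussian, at $p=2$) limit $W$ when $1<p\le 2$; hence the infimum is positive. I expect this uniform lower bound to be the main technical obstacle, since it is the one place where I rely on a limit theorem rather than an elementary estimate, and it must be established without circularity against the normalisation $\E\tau=1/\pi\{0\}$ built into the definition of $\pi$.

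Finally, both range hypotheses enter transparently in this scheme and should be flagged: $x\ge\kappa n^{1/p}$ is exactly what makes the "two big jumps" correction $(m\p(A_1))^2$ negligible against the main term $m\p(A_1)$, while $x\le\kappa^{-1}n$ guarantees that the window of length $n$ genuinely contains $\asymp n\pi\{0\}$ excursions, so that $m$ (and hence $\E S_m$) stays of order $n$ and the deviation $y$ stays of order $x$. Tracking the constants through the conditioning factor $\pi\{0\}$ and the tail constant $c$ then yields the claimed bound with a single $\kappa$ valid on the whole bandwidth.
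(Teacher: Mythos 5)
Your proof is correct, and while it shares the paper's skeleton---condition on $Y_0=0$, rewrite $\sum_{i=0}^{n-1}f(Y_i)\geq x$ as ``the sum of $m=[n\pi\{0\}-x]$ i.i.d.\ excursion lengths exceeds $n$'' via Kac's formula, then prove a one-big-jump lower bound by Bonferroni---the implementation of the key renewal lemma is genuinely different, and in a way that buys uniformity. The paper's Lemma~\ref{lem:deviation_tau} uses companion events constraining $\sum_{j<i}\bar\tau_j$ and $\sum_{j>i}\bar\tau_j$ in a \emph{two-sided} way around $0$; their probabilities are bounded below only when $x$ dominates the fluctuation scale, which forces a split into two regimes ($x$ below versus above $\sqrt{n}$ for $p>2$, $\sqrt{n\log n}$ for $p=2$, $n^{1/p}$ for $p<2$), with the small-$x$ regime handled by a direct limit-theorem bound, and hence three separate proofs indexed by $p$. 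Your choice of \emph{one-sided} companion events $B_i=\{\sum_{j\neq i}(\tau_j-\E\tau)\geq 0\}$ still yields the inclusion $A_i\cap B_i\subseteq\{S_m\geq n\}$, but $\p(B_i)\geq c_0 \coloneqq \inf_k\p(S_k\geq \E S_k)$ is bounded below \emph{independently of the relation between $x$ and $\sqrt{n}$}; the two-jump correction is then killed by $x\geq\kappa n^{1/p}$ alone (which makes $m\,\p(A_1)\leq C\kappa^{-p}$ small), so a single argument covers the whole bandwidth and all $p>1$ at once. You correctly identified the crux, $c_0>0$, and your justification is sound: each term $\p(S_k\geq\E S_k)$ is trivially positive, and the sequence converges to a positive limit (CLT for $p>2$, anomalous Gaussian limit at $p=2$, spectrally positive $p$-stable limit for $1<p<2$, whose support is all of $\R$ since the index exceeds $1$), so the infimum is positive; this uses the limit theorems only through this crude qualitative statement, whereas the paper needs them quantitatively in each regime. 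As a side benefit, your one-sided events avoid a sign slip in the paper's definition of $A_i$ (as written there, with upper bounds $\sum_{j<i}\bar\tau_j\leq x$, the claimed inclusion into $\{\sum\tau_i\geq n\E\tau+x\}$ does not literally hold; the intended constraints are clearly $\geq -x$ or two-sided). The only point to make explicit in a final write-up is the two-sided control $y\leq (x+1)/\pi\{0\}$ (from $m\geq n\pi\{0\}-x-1$), which you use implicitly when asserting $y\asymp x$ and $\p(A_i)\geq c_1x^{-p}$.
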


This paragraph is devoted to the proof of this proposition. Since we are
looking for lower bound, it suffices to consider trajectories starting from
$0$. Denote by $\tau_0, \tau_1,\dotsc$ the lengths of the successive
excursions outside of $0$. This is a sequence of i.i.d.~random variables with
a weak moment of order $p$, namely: $\p ( \tau_0 > n \given Y_0=0) = \sum_{i
\geq n} 1/(\zeta(p+1) i^{p+1})$. We first consider the case $p>2$, and
indicate then the modifications to be done when $p=2$ and when $p\in (1,2)$.

First, we study the probability that the lengths of excursions differ much
from their average.
\begin{lemma}
\label{lem:deviation_tau}  Assume $p>2$. There exists $C_1>0$ such that, for
any $n\geq 1$ and any $x\geq n^{1/p}$, one has
\begin{equation*}
  \p\pare[\Big]{\sum_{i=0}^{n-1} \tau_i \geq n \E(\tau) + x} \geq C_1^{-1} \frac{n}{x^p}.
\end{equation*}
\end{lemma}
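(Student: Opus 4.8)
The plan is to prove this via the classical \emph{one big jump} principle for heavy-tailed i.i.d.\ sums. The excursion lengths $\tau_i$ are i.i.d.\ with a two-sided polynomial tail $\p(\tau\geq t)\asymp t^{-p}$ (the lower bound $\p(\tau\geq t)\geq c't^{-p}$ follows from comparing the tail sum $\sum_{i\geq t}i^{-(p+1)}$ to an integral). Since $p>2$, the $\tau_i$ have finite, nonzero variance, so the central limit theorem controls the bulk. The cheapest way for $\sum_{i=0}^{n-1}\tau_i$ to exceed its mean $n\E(\tau)$ by $x$ is for one excursion to be anomalously large while the other $n-1$ behave typically, and quantifying this via an inclusion--exclusion (second moment) lower bound on a union of single-jump events is the heart of the argument.

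Concretely, for each index $i$ I would introduce
\[
B_i=\{\tau_i\geq x+\E(\tau)\}\cap\Bigl\{\textstyle\sum_{j\neq i}\tau_j\geq (n-1)\E(\tau)\Bigr\}.
\]
On $B_i$ one has $\sum_j\tau_j\geq(x+\E(\tau))+(n-1)\E(\tau)=n\E(\tau)+x$, so that $\{\sum_j\tau_j\geq n\E(\tau)+x\}\supseteq\bigcup_i B_i$. By independence $\p(B_i)=\p(\tau\geq x+\E(\tau))\cdot\p(\sum_{j\neq i}\tau_j\geq(n-1)\E(\tau))$. The first factor is $\gtrsim x^{-p}$ by the lower tail estimate (using $x\geq n^{1/p}\geq 1$ to absorb the shift $\E(\tau)$), while the second factor is the probability that a sum of i.i.d.\ finite-variance variables exceeds its own mean; by the CLT it converges to $1/2$, hence is bounded below by a constant $c_0>0$ uniformly in $n$ (the finitely many small $n$ each contribute a strictly positive constant). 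This yields $\sum_i\p(B_i)\geq a\,nx^{-p}$ for some $a>0$.

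The main step is then the Bonferroni lower bound $\p(\bigcup_i B_i)\geq\sum_i\p(B_i)-\sum_{i<j}\p(B_i\cap B_j)$. Dropping the bulk constraints gives $\p(B_i\cap B_j)\leq\p(\tau\geq x+\E(\tau))^2\lesssim x^{-2p}$ by the upper tail bound, so the correction is $\lesssim n^2x^{-2p}$. Writing $u=nx^{-p}$, I obtain $\p(\bigcup_i B_i)\geq a\,u-\tfrac b2 u^2$, which is at least $\tfrac a2 u$ whenever $u\leq a/b$, i.e.\ whenever $x\geq (b/a)^{1/p}n^{1/p}$. This is exactly where the hypothesis $x\geq n^{1/p}$ enters, and controlling this pair-correction term — guaranteeing that the contribution of \emph{two} simultaneous big jumps does not swamp the single-jump main term — is the main obstacle.

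Finally I would treat the short range $n^{1/p}\leq x\leq (b/a)^{1/p}n^{1/p}$, where $u=nx^{-p}$ is of order one. Here it suffices to bound $\p(\sum_j\tau_j\geq n\E(\tau)+x)$ below by a constant. Since $x\leq (b/a)^{1/p}n^{1/p}=o(\sqrt n)$ (again using $p>2$), the threshold lies within $o(1)$ standard deviations above the mean, so by the CLT and monotonicity in $x$ the probability is at least $\p(\sum_j\tau_j\geq n\E(\tau)+(b/a)^{1/p}n^{1/p})\to 1/2$, hence $\geq c_0$ for all large $n$; combined with $u\leq 1$ this gives the bound on this range. Taking $C_1^{-1}=\min(a/2,c_0)$ then covers all $x\geq n^{1/p}$, and throughout the only inputs used are the two-sided polynomial tail $\p(\tau\geq t)\asymp t^{-p}$ and the finiteness of the variance.
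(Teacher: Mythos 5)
Your proof is correct and follows essentially the same strategy as the paper's: a union of one-big-jump events, a Bonferroni (second-moment) lower bound showing the pair term $n^2x^{-2p}$ is dominated by the main term $nx^{-p}$, and the CLT to handle the remaining range of small $x$ where the target bound is just a constant. The only differences are cosmetic — your single-jump events ask the remaining $n-1$ excursions to exceed their mean (rather than the paper's constraint that the other partial sums stay within $x$ of their mean, with jump threshold $3x$), and you place the crossover between the CLT regime and the big-jump regime at $C n^{1/p}$ rather than at $\sqrt{n}$; both choices work.
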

\begin{proof}
Write $\bar\tau_i = \tau_i - \E(\tau_i)$. There exists $\sigma^2>0$ such that
$\sum_{i=0}^{n-1} \bar \tau_i / \sqrt{n}$ converges to $\calN(0,\sigma^2)$.
It follows that, for $x\in [n^{1/p}, n^{1/2}]$, the left hand side in the
statement of the lemma converges to a quantity which is bounded from below by
$\p(\calN(0,\sigma^2) \geq 1)>0$, while the right hand side is bounded from
above by $C_1^{-1}$. Taking $C_1$ large enough, the conclusion of the lemma
follows in this range of $x$.

Let us now assume $x\geq \sqrt{n}$. For $i<n$, let
\begin{equation*}
  A_i = \{\bar \tau_i \geq 3x \} \cap \accol[\Big]{\sum_{j=0}^{i-1} \bar\tau_j \leq x}
  \cap \accol[\Big]{\sum_{j=i+1}^{n-1} \bar \tau_j \leq x}\,.
\end{equation*}
This decomposition is the intersection of three independent sets. The first
one has probability at least $c/x^p$ as $\tau$ has polynomial tails of order
$p$, while the measure of the other ones is bounded from below thanks to the
central limit theorem for $\bar\tau$, as we assume $x \geq \sqrt{n}$. Hence,
for some constant $c_1$, we obtain
\begin{equation*}
  \p(A_i) \geq c_1/x^p.
\end{equation*}
Moreover, $A_i \cap A_j$ is contained in $\{\bar \tau_i \geq 3x \} \cap
\{\bar \tau_j \geq 3x \}$. By independence, this set has probability at most
$c_2 / x^{2p}$ for some $c_2 > 0$.

On the set $\bigcup A_i$, one has $\sum_{i=0}^{n-1} \tau_i \geq n \E(\tau) +
x$ by construction. To conclude, we should bound from below the measure of
this set. We have
\begin{equation*}
  \p\pare[\Big]{\bigcup A_i} \geq \sum_{i=0}^{n-1} \p(A_i) - \sum_{i\neq j=0}^{n-1} \p(A_i \cap A_j)
  \geq c_1 \frac{n}{x^p} - c_2 \frac{n^2}{x^{2p}}\,.
\end{equation*}
If $n$ is large enough, one has $c_2 n^2/x^{2p} \leq c_1 n/(2 x^p)$ when $x
\geq \sqrt{n}$. Therefore, we get $\p(\bigcup A_i)\geq (c_1/2) n/x^p$,
proving the desired result. As the estimate is trivial for bounded $n$, the
result follows.
\end{proof}

\begin{proof}[Proof of Proposition~\ref{prop:deviation_f_lower} for $p>2$]
Fix some $n\in \N$. Let $N$ denote the number of visits to $0$ of the Markov
chain $Y_i$ starting from $0$ strictly before time $n$. Then, given the
definition of $f$, one has
\begin{equation*}
  \sum_{i=0}^{n-1} f(Y_i) = n \pi\{0\} - N.
\end{equation*}
Therefore, for any $x \geq 0$,
\begin{equation*}
  \accol*{\sum_{i=0}^{n-1} f(Y_i) \geq x} = \{N \leq n \pi\{0\} - x\} = \accol*{\sum_{j=0}^{[n\pi\{0\}-x]-1} \tau_j \geq n}.
\end{equation*}
Let $m =[n\pi\{0\}-x]$. It is positive when $x\leq \kappa^{-1}n$, if $\kappa$
is large enough. We write $n$ as $m \E(\tau) + y$ for some $y$. As $\E(\tau)
= 1/\pi\{0\}$ by Kac formula, we have
\begin{equation*}
  y = n- [n\pi\{0\}-x]/\pi\{0\} \geq x/\pi\{0\}.
\end{equation*}
If $x\geq \kappa n^{1/p}$ with large enough $\kappa$, then $y\geq n^{1/p}$.
Hence, we can apply Lemma~\ref{lem:deviation_tau} to obtain
\begin{equation*}
  \p_0\pare*{\sum_{i=0}^{n-1} f(Y_i) \geq x} \geq C_1^{-1} \frac{m}{y^p} \geq C_2^{-1} \frac{n}{x^p}.
\end{equation*}
We obtain the same lower bound for the random walk started from $\pi$, with
an additional multiplicative factor $\pi\{0\}$.
\end{proof}

\begin{proof}[Proof of Proposition~\ref{prop:deviation_f_lower} for $p=2$]
In this case, $\sum_{j=0}^{n-1} \bar\tau_j / \sqrt{n\log n}$ converges to a
gaussian (see for instance~\cite{feller_2}). Following the proof of
Lemma~\ref{lem:deviation_tau}, one deduces first that this lemma holds
trivially for any $x \in [n^{1/p}, \sqrt{n \log n}]$, and also that it holds
for any $x\geq \sqrt{n\log n}$. It follows then from the same proof as in the
$p>2$ case that the proposition holds for all $x\in [\kappa n^{1/p},
\kappa^{-1}n]$.
\end{proof}

\begin{proof}[Proof of Proposition~\ref{prop:deviation_f_lower} for $p<2$]
In this case, $\sum_{j=0}^{n-1} \bar\tau_j / n^{1/p}$ converges to a stable
law (which is totally asymmetric of index $p$, see~\cite{feller_2}). Hence,
Lemma~\ref{lem:deviation_tau} holds for any $x\geq n^{1/p}$. It follows then
from the same proof as in the $p>2$ case that the proposition holds for all
$x\in [\kappa n^{1/p}, \kappa^{-1} n]$.
\end{proof}

\subsection{Young towers}

Consider now a Young tower $T: Z\to Z$ with invariant measure $\pi$ for which
the return time $\tau$ to the basis $Z_0$ of the tower satisfies
$\pi\{\tau=n\} \sim c/n^{p+1}$ on $Z_0$, for some $p>1$. In perfect analogy
with the previous paragraph, we define a function $f$ by $f = \pi(Z_0) -
\un_{Z_0}$. Its average under $\pi$ vanishes. The corresponding Markov chain
satisfies \H for the Hölder norm on the tower, see for
instance~\cite{gouezel_melbourne} and references therein.

Starting from $Y_0$ distributed according to $\pi$, we can consider $Y_0,
T(Y_0),\dotsc, T^{n-1}(Y_0)$, or the dual Markov chain $Y_0,\dotsc, Y_{n-1}$.
Then $Y_0,\dotsc, Y_{n-1}$ is distributed as $T^{n-1}(Y_0),\dotsc, Y_0$, as
explained at the beginning of Section~\ref{sec:Young_concentration}. It
follows that moderate deviations controls for one process or the other are
equivalent. We will state the lower bound statement for the Markov chain,
but we will prove it using the dynamical time direction.

\begin{proposition}
\label{prop:deviation_f_lower_Young} In this context, assume $p>2$. 
There exists
$\kappa>0$ such that, for any $n\in \N^*$ and any $x \in [\kappa n^{1/p},
\kappa^{-1} n]$,
\begin{equation*}
  \p\pare*{\sum_{i=0}^{n-1} f(Y_i) \geq x} \geq \kappa^{-1} \frac{n}{x^p}.
\end{equation*}
\end{proposition}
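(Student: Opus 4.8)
The plan is to reduce this statement, as much as possible, to the computation already carried out for the discrete renewal chain in Proposition~\ref{prop:deviation_f_lower}. The key structural observation is that, with $f = \pi(Z_0) - \un_{Z_0}$, the partial sum $\sum_{i=0}^{n-1} f(Y_i)$ along a trajectory equals $n \pi(Z_0) - N$, where $N$ is the number of visits to the base $Z_0$ up to time $n$. As stated in the paragraph preceding the proposition, moderate deviations controls for the Markov chain and for the dynamics are equivalent, so I would work with $Y_0, T(Y_0), \dotsc, T^{n-1}(Y_0)$ and count visits of the forward orbit to $Z_0$. The event $\{\sum_{i=0}^{n-1} f(Y_i) \geq x\}$ then becomes $\{N \leq n\pi(Z_0) - x\}$, which in turn is the event that the first $m := [n\pi(Z_0) - x]$ return times to the base sum to at least $n$. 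This is exactly the same reformulation as in the discrete case.

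Concretely, I would start the orbit from $Z_0$ (restricting to such trajectories only lowers the probability, which is fine for a lower bound) and let $\tau_0, \tau_1, \dotsc$ denote the successive return times to $Z_0$. Because the successive excursions of a Young tower decorrelate at the base, and because $\pi\{\tau = n\} \sim c/n^{p+1}$ gives $\tau$ a weak moment of order $p$, these return times behave, for the purpose of a tail/CLT lower bound, like the i.i.d.\ sequence in the renewal example. The heart of the argument is therefore to reprove the analogue of Lemma~\ref{lem:deviation_tau}, namely that for $x \geq n^{1/p}$ one has $\p(\sum_{i=0}^{m-1} \tau_i \geq m\E(\tau) + y) \geq C^{-1} m/y^p$ in the appropriate range. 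The proof strategy is identical to the i.i.d.\ case: for the small-deviation range $x \in [n^{1/p}, \sqrt{n}]$ invoke the central limit theorem for $\sum \bar\tau_i/\sqrt{n}$, and for $x \geq \sqrt{n}$ use the single-big-jump heuristic, writing $A_i = \{\bar\tau_i \geq 3y\} \cap \{\sum_{j<i}\bar\tau_j \leq y\} \cap \{\sum_{j>i}\bar\tau_j \leq y\}$ and estimating $\p(\bigcup A_i)$ by inclusion–exclusion, $\sum \p(A_i) - \sum_{i\neq j}\p(A_i\cap A_j) \geq c_1 m/y^p - c_2 m^2/y^{2p}$. Once this tail lower bound is in hand, I would set $m = [n\pi(Z_0)-x]$, write $n = m\E(\tau) + y$, use Kac's formula $\E(\tau) = 1/\pi(Z_0)$ to get $y \geq x/\pi(Z_0) \geq n^{1/p}$ for $x \geq \kappa n^{1/p}$, and conclude $\p(\sum f(Y_i)\geq x) \geq C^{-1} m/y^p \geq \kappa^{-1} n/x^p$, exactly as in the $p>2$ case of Proposition~\ref{prop:deviation_f_lower}.

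The main obstacle, and the only genuinely new point relative to the discrete chain, is that the return times $\tau_i$ for a general Young tower are \emph{not} exactly independent: the classical theory only provides exponential decay of correlations between successive excursions rather than literal independence. Consequently the three sets making up $A_i$ are not exactly independent, and the products $\p(A_i)$, $\p(A_i \cap A_j)$ used above must be justified through the quantitative decorrelation estimates of the tower (as developed in~\cite{gouezel_melbourne}) rather than by naive independence. I expect to handle this either by passing to the associated i.i.d.\ renewal structure induced by the first-return map to $Z_0$ — which \emph{is} genuinely i.i.d.\ by construction of the tower — so that the $\tau_i$ become exactly the i.i.d.\ renewal times and Lemma~\ref{lem:deviation_tau} applies verbatim, or, if one works directly on $Z$, by absorbing the correlation errors into the constants using that the mixing is fast enough not to spoil either the CLT-based lower bounds or the second-moment inclusion–exclusion bound. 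The former route is cleaner: the first-return decomposition reduces the Young-tower statement precisely to the renewal computation already completed, so the proof becomes essentially a translation of the discrete case together with a verification that $\tau$ has a weak moment of order $p$ under the hypothesis $\pi\{\tau=n\}\sim c/n^{p+1}$.
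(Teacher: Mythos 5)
Your reduction to the return-time problem and the single-big-jump strategy with the events $A_i = \{\bar\tau_i \geq 3x\} \cap \{\sum_{j<i}\bar\tau_j \leq x\} \cap \{\sum_{j>i}\bar\tau_j \leq x\}$ are exactly the skeleton of the paper's proof. However, the one genuinely new point --- how to handle the lack of independence of the $\tau_i$ --- is where your proposal has a real gap, and both of your proposed fixes fail as stated. Your preferred route (a) rests on a false premise: for a general Young tower the successive return times $\tau_i = \tau(T_0^i z)$ to the basis are \emph{not} i.i.d. The induced map $T_0$ is a Gibbs--Markov map, each branch mapping onto $Z_0$ with bounded distortion, but the invariant measure $\pi_0$ is not a product measure in the symbolic coding (the Jacobian is only H\"older-controlled along branches, not locally constant), so the ``first-return decomposition'' does not reduce the statement to the i.i.d.\ renewal computation. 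If it did, the proposition would be a corollary of Proposition~\ref{prop:deviation_f_lower} with no new work, which is precisely not the case.

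Your fallback route (b) --- ``absorbing the correlation errors into the constants'' using decay of correlations --- also does not work in the form you describe, and the reason is quantitative. The event $A_i^1 = \{\bar\tau_i \geq 3x\}$ has probability of order $x^{-p}$, which is polynomially small; an \emph{additive} correlation bound of the form $\abs{\pi_0(A\cap T_0^{-k}B) - \pi_0(A)\pi_0(B)} \leq \ep_k$ (even exponentially small in $k$, but here the relevant gap for the pair $A_i^1$, $A_i^3$ is $k=1$) is useless for bounding $\pi_0(A_i)$ from below by $c/x^p$, since the error term swamps the product. What is needed is a \emph{multiplicative} estimate, and this is what the paper uses: the bounded distortion property~\eqref{eq:bounded_distortion} of the Gibbs--Markov induced map, $C^{-1}\pi_0(A)\pi_0(B) \leq \pi_0(A\cap T_0^{-k}B) \leq C\pi_0(A)\pi_0(B)$, valid when $A$ is a union of length-$k$ cylinders. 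The paper's proof then hinges on a structural observation your proposal misses: because $\tau$ is constant on the elements of the partition $\alpha_0$, the block event $\{\sum_{j<i}\bar\tau_j \leq x\}$ \emph{is} a union of length-$i$ cylinders, so writing $A_i = B_2 \cap T_0^{-i}(B_1 \cap T_0^{-1}B_3)$ and applying~\eqref{eq:bounded_distortion} twice gives $\pi_0(A_i) \geq C^{-2}\pi_0(A_i^1)\pi_0(A_i^2)\pi_0(A_i^3)$, with the matching upper bound for $\pi_0(A_i\cap A_j)$; the lower bounds on $\pi_0(A_i^2)$, $\pi_0(A_i^3)$ themselves require the CLT for Birkhoff sums of $\tau$ under the Gibbs--Markov map (Aaronson--Denker), not the i.i.d.\ CLT. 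Without this cylinder-structure argument (or an equivalent multiplicative decorrelation input), your proof does not close.
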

\begin{proof}
We work using the dynamical time direction. Starting from a point in the
basis $Z_0$ of the tower, let $\tau_0,\tau_1,\dotsc$ denote the lengths of
the successive excursions out of $Z_0$. The proof will be the same as for
Proposition~\ref{prop:deviation_f_lower} (notice that the statement is
exactly the same). The only difference is that the successive returns to the
basis are not independent, which means that the proof of
Lemma~\ref{lem:deviation_tau} has to be amended. We only give the proof for
$p>2$, as the other cases are virtually identical.

Let $T_0:Z_0 \to Z_0$ be the map induced by $T$ on the basis. It preserves
the probability $\pi_0$ induced by $\pi$ on $Z_0$. By definition, $T_0$ is a
Gibbs-Markov map with onto branches, i.e., there is a partition $\alpha_0$ of
$Z_0$ into positive measure subsets, such that $T_0$ maps bijectively each
$a\in \alpha_0$ to $Z_0$, with the following bounded distortion property. A
length $k$ cylinder is a set of the form $[a_0,\dotsc, a_{k-1}] =
\bigcap_{i<k} T_0^{-i} a_i$ for some $a_0,\dotsc, a_{k-1} \in \alpha_0$. Then
there exists a constant $C$ such that, for any $k>0$, for any length $k$
cylinder $A$ and for any measurable set $B$,
\begin{equation}
\label{eq:bounded_distortion}
  C^{-1} \pi_0(A) \pi_0(B) \leq \pi_0(A \cap T_0^{-k} B) \leq C \pi_0(A) \pi_0(B).
\end{equation}
(See for instance the last line in Section~1 of~\cite{aaronson_denker}.) This
estimate readily extends if $A$ is a union of length $k$ cylinders.

We can now prove the analogue of Lemma~\ref{lem:deviation_tau} in our
situation. Let $\bar\tau_i = \tau_i - \E(\tau_i)$. Define
\begin{equation*}
  A_i = \{\bar \tau_i \geq 3x \} \cap \accol[\Big]{\sum_{j=0}^{i-1} \bar\tau_j \leq x}
  \cap \accol[\Big]{\sum_{j=i+1}^{n-1} \bar \tau_j \leq x}
  =A_i^1 \cap A_i^2 \cap A_i^3\,.
\end{equation*}
We should show that, if $x \geq \sqrt{n}$, then $\pi_0(A_i) \geq c_1/x^p$ for
some $c_1>0$ independent of $i$ or $n$, and that $\pi_0(A_i \cap A_j) \leq
c_2/x^{2p}$ for $i<j$. Then, the proof of Lemma~\ref{lem:deviation_tau}
applies. In this lemma, the inequality $\p(A_i) \geq c_1/x^p$ follows from
independence and the fact that $\p(A_i^2) \geq c$ and $\p(A_i^3) \geq c$ and
$\p(A_i^1) \geq c/x^p$. In our context, these three inequalities still hold
(the first two ones follow from the fact that the Birkhoff sums of $\tau$
satisfy the central limit theorem or converge to a stable law,
see~\cite{aaronson_denker_central} and~\cite{aaronson_denker}, and the last
one from the assumptions on the tails of $\tau$), but independence fails. It
will be replaced by~\eqref{eq:bounded_distortion}. Let us give the details.
Recall that $\bar\tau_i = \tau(T_0^i z) - \pi (\tau) \coloneqq \bar\tau(T_0^i
z) $. Define
\[
B_1 = \{y \st \bar\tau(y) \geq 3x\}  \, , \,
B_2 = \accol[\Big]{y \st \sum_{j=0}^{i-1} \bar\tau(T_0^j y) \leq x}  \, \text{ and } \,
B_3 = \accol[\Big]{y \st \sum_{j=0}^{\mathclap{n-1-(i+1)}} \bar\tau(T_0^j y) \leq x} \, .
\]
We have $A_i^1 = T_0^{-i}(B_1)$,  $A_i^2 = B_2$ and $A_i^3 = T_0^{-(i+1)}
B_3$. Therefore,
\[
\pi_0(A_i) = \pi_0 \pare[\big]{B_2 \cap T_0^{-i}(B_1 \cap T_0^{-1}(B_3))} \, .
\]
Applying inequality~\eqref{eq:bounded_distortion} with $k=i$,  $A = B_2$ and
$B = B_1 \cap T_0^{-1}(B_3)$ (which is possible since $B_2$ is a union of
length $i$ cylinders since $\tau$ is constant on elements of $\alpha_0$), we
get $ \pi_0(A_i) \geq C^{-1} \pi_0(B_2) \pi_0(B_1 \cap T_0^{-1}(B_3))$. Next,
applying again~\eqref{eq:bounded_distortion} this time with $k=1$, $A = B_1$
and $B = B_3$ (which is possible since $ B_1$ is a union of length $1$
cylinders), we have $ \pi_0(B_1 \cap T_0^{-1}(B_3)) \geq C^{-1}  \pi_0(B_1)
\pi_0(B_3)$. So overall,
\[
\pi_0(A_i) \geq C^{-2} \pi_0(A_i^2) \pi_0(A_i^1) \pi_0(A_i^3) \, .
\]
This inequality replaces the independence assumption and implies
that~$\pi_0(A_i) \geq c_1/x^p$.  The inequality $\pi_0(A_i \cap A_j) \leq
c_2/x^{2p}$ is proved in the same way, using the upper bound
in~\eqref{eq:bounded_distortion}.
\end{proof}

\subsection{Harris Markov chains with state space \texorpdfstring{$[0,1]$}{[0,1]}}
Let $a = p-1$ with $p >1 $. Let $\lambda$ denote the Lebesgue measure on
$[0,1]$. Define the probability laws $\nu$ and $\pi$ by
\[
\nu = (1+a) x^a \lambda \, \text { and } \, \pi = a x^{a-1} \lambda \, .
\]
We define now a strictly stationary Markov chain by specifying its transition
probabilities $K(x,A)$ as follows:
\[
K(x,A)=(1-x)\delta_{x}(A)+x\nu(A)\, ,
\]
where $\delta_{x}$ denotes the Dirac measure. Then $\pi$ is the unique
invariant probability measure of the chain with transition probabilities
$K(x, \cdot)$. Let $(Y_i)_{i \in {\Z}}$ be the stationary Markov chain on
$[0,1]$ with transition probabilities $K(x, \cdot)$ and law $\pi$. For
$\gamma>0$, we set
\[
c_{a, \gamma} = \frac{a}{a+ \gamma} \, , \, X_i = f_{\gamma} (Y_i) - \E(f_{\gamma} (Y_i)) \coloneqq Y_i^{\gamma} - c_{a, \gamma} \, \text{ and } \, S_n = \sum_{i=0}^{n-1} X_i \, .
\]
Denote by
\[
\beta_n \coloneqq \frac 12
 \pi \pare[\Big]{\sup_{\norm{f}_{\infty} \leq 1} \abs{K^n(f) - \pi (f)}} \, ,
\]
and set $T(x) =1-x$. According to Lemma 2 in Doukhan, Massart and Rio (1994),
\[
\beta_n \leq 3 \, \E_{\pi} (T^{[n/2]}) \, .
\]
Note now that for any $b > -1$,
\[
 \int_0^1 (1-x)^k x^b \dd x = k^{-(b+1)} \int_0^{k} (1-x/k)^k x^b \dd x \, .
\]
Since for any $x \in [0,1]$, $\log (1-x) \leq -x$, it follows that
\begin{equation} \label{majorationevidentegamme}
 \int_0^1 (1-x)^k x^b \dd x \leq k^{-(b +1)} \int_0^{k} e^{-x} x^b \dd x \leq k^{-(b +1)} \Gamma (b+1) \, ,
\end{equation}
implying that
\[
 \E_{\pi} (T^k) \leq a \Gamma (a) k^{-a} \, .
\]
Therefore
\begin{equation*}
 \sup_{\norm{f}_{\infty} \leq 1} \pi \pare[\big]{\abs{K^n(f) - \pi (f)}} \leq  2 \beta_n \leq C n^{-a} \, ,
\end{equation*}
which shows that the condition \Hun is satisfied for the two norms
$\norm{f}_{\infty}$ and $\norm{f}_{BV} $. For the norm $\norm{f}_{\infty}$,
the condition \Hdeux is trivially satisfied with $C_2=1$. Hence,
Theorem~\ref{ModDevMC} applies to $(f_{\gamma} (Y_i))_{i \in \Z}$. We shall
verify that the condition \Hdeux also holds for the norm $\norm{f}_{BV}  =
\norm{f}_{\infty} + \abs{\dd f}$ at the end of this section. Concerning the
lower bound, the following proposition holds:

\begin{proposition}
\label{prop:deviation_f_lower_Harris} Let $(Y_i)_{i \in \N}$ be a stationary
Markov chain with transition kernel described above. Assume $p>1$ and $\gamma
>0$. There exists $\kappa>0$ such that, for any $n\in \N^*$ and any $x \in
[\kappa n^{1/p}, \kappa^{-1} n]$,
\begin{equation*}
  \p\pare*{\max_{1 \leq k \leq n} \abs[\Big]{\sum_{i=0}^{k-1} (Y^{\gamma}_i - \E (Y^{\gamma}_i  )  )}  \geq x} \geq \kappa^{-1} \frac{n}{x^p} \, .
\end{equation*}
\end{proposition}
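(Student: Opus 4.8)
The plan is to exploit the regeneration structure of the chain: at each step the chain either stays put (with probability $1-Y_i$) or jumps to a fresh point drawn from $\nu$ (with probability $Y_i$), and successive fresh draws are independent. Conditioning on a jump at the first step, which happens with probability $\E_\pi(Y_0)=a/(a+1)>0$, it suffices to prove the lower bound for the chain issued from $Y_0\sim\nu$, losing only a constant factor absorbed into $\kappa$ (the contribution $X_0$ of the starting point is bounded by $2$ and is negligible at the scale $x\geq\kappa n^{1/p}$). For the chain started from $\nu$ the trajectory splits into i.i.d.\ blocks: block $j$ consists of a state $y_j\sim\nu$ and a sojourn $L_j$ which, given $y_j$, is geometric, $\p(L_j\geq\ell\given y_j)=(1-y_j)^{\ell-1}$. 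Its contribution to the sum is $R_j=L_j(y_j^\gamma-c_{a,\gamma})$, and since $\E(L_j\given y_j)=1/y_j$ a direct computation gives $\E(R_j)=0$. Restricting to small $y_j$ and using~\eqref{majorationevidentegamme} exactly as in the bound on $\beta_n$ yields a heavy negative tail
\[
\p\pare[\big]{R_j\leq -t}\geq \p\pare[\big]{y_j\leq\delta,\ L_j\geq 2t/c_{a,\gamma}}\geq c_1 t^{-p}
\]
for $\delta=(c_{a,\gamma}/2)^{1/\gamma}$ and all large $t$, the integrand $(1-y)^{\lceil 2t/c_{a,\gamma}\rceil}y^a$ concentrating at $y\asymp 1/t\ll\delta$. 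Finally $\E(L_j)=\E_\nu(1/y)=(1+a)/a=p/(p-1)<\infty$, so the $j$-th block ends at time $T_j=\sum_{i\leq j}L_i$ with $\E(T_j)\asymp j$.

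The core of the argument mimics Lemma~\ref{lem:deviation_tau}: a single long sojourn at a near-zero state forces a large negative excursion. I would fix $C=4/c_{a,\gamma}$ and, for $1\leq i\leq m:=\lfloor c n\rfloor$ with $c<(p-1)/(2p)$, set
\[
A_i=\accol[\big]{T_{i-1}\leq n/2}\cap\accol[\Big]{\abs[\Big]{\sum_{j<i}R_j}\leq x}\cap\accol[\big]{y_i\leq\delta,\ L_i\geq C x}.
\]
The first two events depend only on blocks $<i$ and the third only on block $i$, so they are independent. On $A_i$, since $Cx\leq n/2$ (guaranteed by $x\leq\kappa^{-1}n$ once $\kappa\geq 2C$), the chain spends the steps $T_{i-1},\dotsc,T_{i-1}+Cx$ at a state $y_i\leq\delta$, whence at time $k^\ast=T_{i-1}+Cx\leq n$ one has $S_{k^\ast}=\sum_{j<i}R_j+Cx(y_i^\gamma-c_{a,\gamma})\leq x-\tfrac{C c_{a,\gamma}}{2}x\leq -x$, so $\max_{1\leq k\leq n}\abs{S_k}\geq x$. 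Using independence, the tail bound above, and the constant lower bound $\p(T_{i-1}\leq n/2,\ \abs{\sum_{j<i}R_j}\leq x)\geq c_2$ (which holds because $\p(T_{i-1}>n/2)\to 0$ by the law of large numbers, while $\p(\abs{\sum_{j<i}R_j}\leq x)$ is bounded below once $x$ exceeds the fluctuation scale of a sum of $\lesssim n$ i.i.d.\ terms with a weak moment of order $p$), I get $\p(A_i)\geq c_3 x^{-p}$, whereas $\p(A_i\cap A_j)\leq c_4 x^{-2p}$ for $i\neq j$ since two distinct blocks must be long. A Bonferroni bound then gives
\[
\p\pare[\Big]{\max_{1\leq k\leq n}\abs{S_k}\geq x}\geq\p\pare[\Big]{\bigcup_{i=1}^m A_i}\geq c_3\,m\,x^{-p}-c_4\,m^2 x^{-2p}\geq c_5\,n\,x^{-p},
\]
the last step holding as soon as $x^p\geq c_6 n$, i.e.\ throughout the range once $\kappa$ is large.

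This already covers all of $[\kappa n^{1/p},\kappa^{-1}n]$ when $p<2$, and covers $x\geq C'\sqrt n$ (resp.\ $x\geq C'\sqrt{n\log n}$) when $p>2$ (resp.\ $p=2$), this being where the fluctuation estimate on $\sum_{j<i}R_j$ is available. For the remaining moderate range $x\in[\kappa n^{1/p},C'\sqrt n]$ ($p>2$) or $[\kappa n^{1/p},C'\sqrt{n\log n}]$ ($p=2$) I would invoke the limit theorem for the partial sums, which follows from the i.i.d.\ block decomposition and classical theory: $S_n/\sqrt n$ converges to a nondegenerate Gaussian for $p>2$, $S_n/\sqrt{n\log n}$ to a Gaussian for $p=2$, and $S_n/n^{1/p}$ to a totally asymmetric stable law for $p<2$. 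In that range $\p(\max_k\abs{S_k}\geq x)\geq\p(\abs{S_n}\geq x)$ is bounded below by a positive constant while $n x^{-p}\leq\kappa^{-p}$, so the claim holds for large $\kappa$, exactly as in the proofs of Proposition~\ref{prop:deviation_f_lower} for the three ranges of $p$. The main obstacle will be the bookkeeping tying the block index $i$ to real time: one must guarantee both that $\asymp n$ indices satisfy $T_{i-1}\leq n/2$ (handled by $\E L_j<\infty$) and that the long block produces its dip before time $n$ (handled by the restriction $x\leq\kappa^{-1}n$), so that the mid-block partial sum at $k^\ast$ genuinely lies in $[1,n]$. The continuous observable $y\mapsto y^\gamma$, unlike the indicator in Proposition~\ref{prop:deviation_f_lower}, precludes an exact reduction to a statement about $\sum_j L_j$ and forces this slightly more hands-on treatment.
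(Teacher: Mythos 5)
Your proposal is correct in outline, but it takes a genuinely different route from the paper's own proof. Both arguments start from the same regeneration structure (the i.i.d.\ pairs $(Y_{T_k},\tau_k)$ with $Y_{T_k}\sim\nu$ and $\tau_k$ geometric given $Y_{T_k}$), but you then transplant the second-moment scheme of Lemma~\ref{lem:deviation_tau}: build events $A_i$ on which one long sojourn at a state $y_i\leq\delta$ forces the partial sum to dip below $-x$ at an explicit time $k^*\leq n$, lower bound $\p(A_i)$ and upper bound $\p(A_i\cap A_j)$ by independence, conclude by Bonferroni, and patch the range of $x$ below the fluctuation scale ($\sqrt n$, $\sqrt{n\log n}$, or nothing according to $p$) with a distributional limit theorem for $S_n$. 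The paper argues in the reverse direction: it proves inequality~\eqref{bornesuppresquefinale}, showing that the event $\{\max_k \tau_k|X_{T_k}|\geq 24x\}$ --- whose probability is directly seen to be $\geq C_{p,\gamma}\,n x^{-p}$ --- forces $\max_{k\leq [n\E(\tau_1)]+1}|S_k|\geq x$ up to the error event $\{T_n\geq 2[n\E(\tau_1)]+1\}$; the proof of~\eqref{bornesuppresquefinale} uses Etemadi's inequality for the independent variables $Z_k=\tau_{k-1}X_{T_{k-1}}$ and the identity $\sum_{i\leq n}Z_i=S_{T_n}$, and the error probability is $O(n^{1-p})$ by a Fuk--Nagaev bound for the i.i.d.\ $\tau_i$, hence dominated by $nx^{-p}$ throughout $x\leq\kappa^{-1}n$. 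What the paper's route buys is that it never needs anti-concentration of the chain's partial sums nor any CLT for $S_n$; these are exactly the two points you delegate to ``classical theory'': (i) the uniform bound $\p(T_{i-1}\leq n/2,\ |\sum_{j<i}R_j|\leq x)\geq c_2$, and (ii) the limit theorem for $S_n$ itself, which is not just the i.i.d.\ theory for the $R_j$ but requires an Anscombe-type argument for the random number of completed blocks together with tightness of the incomplete-block contribution. Both are standard, so your sketch stands, but note that you could dispense with (ii) entirely: replacing the two-sided condition in $A_i$ by the one-sided $\sum_{j<i}R_j\leq x$ still yields $S_{k^*}\leq x-2x=-x$, while $\p\pare[\big]{\sum_{j<i}R_j\leq x}\geq\inf_m\p\pare[\big]{\sum_{j<m}R_j\leq 0}>0$ uniformly; since the Bonferroni step only needs $x^p\gtrsim n$, this covers the whole range $[\kappa n^{1/p},\kappa^{-1}n]$ in one stroke, as the paper's argument does.
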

\begin{proof} We first define a sequence $(T_k)_{k \geq 0}$
of stopping time as follows:
\[
 T_0 = \inf\{i > 0 \, : \, Y_i \neq Y_{i-1}\} \, \text{ and } \,
 T_k = \inf\{i > T_{k-1} \, : \, Y_i \neq Y_{i-1}\} \, \text{ for $k > 0$} \, .
\]
Let $\tau_k = T_{k+1} - T_{k}$. The r.v.'s $(Y_{T_k} , \tau_k )_{k \geq 0}$
are i.i.d., $Y_{T_k}$ has law $\nu$ and the conditional distribution of
$\tau_k $ given $Y_{T_k}=y$ is the geometric distribution $\calG (1-y)$. We
have in particular that $\tau_0$ is integrable.  The key inequality for
proving the lower bound is the following one:
\begin{equation} \label{bornesuppresquefinale}
\p\pare[\Big]{\max_{0 \leq k \leq n-1} \tau_k \abs{X_{T_k}} \geq 24 x }  \leq 9 \p\pare[\Big]{\max_{1 \leq k \leq [n\E (\tau_1) ] +1} \abs{S_{k}} \geq  x } + 3 \p(T_n \geq 2 [n\E (\tau_1) ] +1 ) \, .
\end{equation}
Before proving it, let us show how it will entail the lower bound.

Using the fact the r.v.'s $(Y_{T_k} , \tau_k )_{k \geq 0}$ are i.i.d.,
$Y_{T_k}$ has law $\nu$ and the conditional distribution of $\tau_k $ given
$Y_{T_k}=y$ is the geometric distribution $\calG (1-y)$, straightforward
computations imply that for $x \geq \kappa n^{1/p} $ with $\kappa$ large
enough,
\begin{equation} \label{b1max} \p\pare[\big]{\max_{0 \leq k \leq
n-1} \tau_k \abs{X_{T_k}} \geq  24 x } \geq  C_{p, \gamma} \frac{n}{x^p} \, ,
\end{equation}
where
\[
C_{p, \gamma} = \frac{1}{4}  \pare[\Big]{\frac{c_{a, \gamma} \eta}{ 48}}^{p} p \Gamma (p)  \, , \, \text{ with $\eta = 1 - (c_{a, \gamma} /2)^{1/\gamma} $.}
\]
On the other hand,
\[
\p(T_n \geq 2 [n\E (\tau_1) ] +1 ) \leq \p\pare[\Big]{T_0 + \sum_{i=0}^{n-1} (\tau_i - \E(\tau_i)) \geq [n\E (\tau_1) ]}\,.
\]
Since $\E(\tau_1) \geq 1$, this gives
\[
\p(T_n \geq 2 [n\E (\tau_1) ] +1 ) \leq \p\pare[\big]{T_0 \geq n/2} +\p\pare[\Big]{\sum_{i=0}^{n-1} (\tau_i - \E(\tau_i)) \geq n/2} \, .
\]
Since $\p\pare[\big]{T_0 \geq n/2} \leq \int_0^1 (1-x)^{n/2} d\pi(x) $,
according to~\eqref{majorationevidentegamme}
\[
\p\pare[\big]{T_0 \geq n/2} \leq 2^a a n^{-a} \Gamma (a) \, .
\]
Assume from now that $p \geq 2$. Since the $(\tau_k)_{k \geq 0}$ are i.i.d.,
the Fuk-Nagaev inequality for independent random variables (see for instance
Theorem B.3 in Rio (2000) and its proof) gives that, for any $u > 0$ and any
$v_n^2 (u) \geq \sum_{i=0}^{n-1} \E ((\tau_i \wedge u)^2)$,
\begin{equation} \label{FNineqind} 
  \p\pare[\Big]{\sum_{i=0}^{n-1} (\tau_i - \E(\tau_i)) \geq n/2} \leq n \p(\tau_1 \geq u )
 + \exp \pare[\Big]{- \frac{n}{4u} \log \pare[\big]{1 + \frac{n u}{2 v_n^2 (u)}}} \, .
\end{equation}
We shall apply this inequality with the following choice of $u$:
\begin{equation*}
u = \frac{n}{8 (p-1)} \, .
\end{equation*}
The selection of $v_n^2(u)$ will be different if $p>2$ or if $p=2$. Assume
first that $p > 2$. In this case, we take $v_n^2 (u) = n \E (\tau_1^2) $.
Since $Y_{T_k}$ has law $\nu$ and the conditional distribution of $\tau_k $
given $Y_{T_k}=y$ is the geometric distribution $\calG (1-y)$, simple
computations give
\[
\E (\tau^2_1) = \frac{p^2}{(p-1)(p-2)} : =c_p \text{ and then } v_n^2 (u)= c_p n \, .
\]
On another hand, if $p=2$, we first note that
\[
 \E((\tau_1 \wedge u)^2) = \E(\tau^2_1 {\bf 1}_{\tau_1 \leq u}) + u^2 \p(\tau_1 \geq u )
 \leq \sum_{\ell = 0}^{[u-1]} (2 \ell +1) \p(\tau_1 \geq \ell) + u^2 \p(\tau_1 \geq u ) \, .
\]
Now~\eqref{majorationevidentegamme} implies that $\p(\tau_1 \geq \ell) \leq 2
\ell^{-2}$ . Therefore, if $n \geq 8$,
\[
 \E ((\tau_1 \wedge u)^2) \leq \log (u ) + 5 \leq 3 \log (n ) \, .
\]
So, in case $p=2$, we take $v_n^2 (u) = 3 n \log n $.

If $p>2$, then~\eqref{FNineqind} together with the fact that,
by~\eqref{majorationevidentegamme}, $ \p(\tau_1 \geq \ell) \leq p \Gamma(p)
\ell^{-p}$ imply that
\begin{equation} \label{FNineqind2}
\p\pare[\Big]{\sum_{i=0}^{n-1} (\tau_i - \E(\tau_i)) \geq n/2} \leq p
\Gamma (p) \times (8 (p-1))^{p} n^{-p+1} + (16 (p-1) c_p )^{2 (p-1)} n^{-2(p-1)} \, .
\end{equation}
So, overall, starting from~\eqref{bornesuppresquefinale} and taking
account~\eqref{b1max} and~\eqref{FNineqind2}, we get that for $\kappa$ large
enough
\begin{multline*}
 \p\pare[\Big]{\max_{1 \leq k \leq [n\E (\tau_1) ] +1} \abs{S_{k}} \geq x}  \geq 9^{-1} p\Gamma (p) \Big \{4^{-1} \pare[\Big]{\frac{c_{a, \gamma} \eta}{48}}^{p} n x^{-p}- 6 (8 (p-1) )^{p} n^{-p+1} \Big \}  \\
 - 3^{-1}(16 (p-1) c_p )^{2 (p-1)} n^{-2(p-1)} \, .
\end{multline*}
Since $n^{-p} \leq (x \kappa)^{-p}$ and $\E(\tau_1) = \frac{p}{p-1} \leq 2$,
it follows that for $\kappa$ large enough
\[
 \p\pare[\Big]{\max_{1 \leq k \leq  2n +1} \abs{S_{k}} \geq x}  \geq \frac{2n}{\kappa x^p} \, ,
\]
giving the lower bound when $p>2$.

We turn now to the case when $p=2$, we derive this time
\begin{equation*}
\p\pare[\Big]{\sum_{i=0}^{n-1} (\tau_i - \E(\tau_i)) \geq n/2} \leq 2
\times 8^{2} n^{-1} + (3 \times 16 )^{2} (\log n )^2 n^{-2} \, .
\end{equation*}
Proceeding as before, the lower bound follows.

We end the proof by considering the case $1 < p <2$. Let $u$ be a positive
real and set ${\bar \tau}_i = (\tau_i \wedge u)$. Note that
\begin{multline*}
 \sum_{i=0}^{n-1} (\tau_i - \E(\tau_i)) = \sum_{i=0}^{n-1} ({\bar \tau}_i - \E({\bar \tau}_i)) +\sum_{i=0}^{n-1} ((\tau_i -u)_+ - \E((\tau_i -u)_+)) \\
 \leq \sum_{i=0}^{n-1} ({\bar \tau}_i - \E({\bar \tau}_i)) +\sum_{i=0}^{n-1} (\tau_i -u)_+ \, ,
\end{multline*}
which implies that
\[
\p\pare[\big]{\sum_{i=0}^{n-1} (\tau_i - \E(\tau_i)) \geq n/2} \leq \p\pare[\big]{({\bar \tau}_i - \E({\bar \tau}_i)) \geq n/2} + \sum_{i=0}^{n-1} \p(\tau_i \geq u) \, .
\]
Next, by Markov inequality, we get that for any $u > 0$,
\begin{multline*}
\p\pare[\big]{\sum_{i=0}^{n-1} (\tau_i - \E(\tau_i)) \geq n/2} \leq 4 n^{-1} \E ((\tau_1 \wedge u)^2) +n\p(\tau_1 \geq u) \\
\leq 4 n^{-1} \E (\tau^2_1 {\bf 1}_{\tau_1 \leq u}) + 4 n^{-1} u^2 \p(\tau_1 \geq u) + n \p(\tau_1 \geq u) \, .
\end{multline*}
We have
\[
  \E (\tau^2_1 {\bf 1}_{\tau_1 \leq u}) \leq 2 \int_0^u t \p(\tau_1 \geq t) \dd t
  \leq 1 + 2 p \Gamma (p) \int_1^u \frac{t}{[t]^p} \dd t
  \leq 1 + \frac{2^{p+1} p \Gamma (p)}{2-p} u^{2-p} \, .
\]
Therefore, choosing $u = n$, we get overall that, in the case $1 < p <2$,
\[
\p\pare[\big]{\sum_{i=0}^{n-1} (\tau_i - \E(\tau_i)) \geq n/2} \leq 4 n^{-1} + n^{1-p} p \Gamma (p) \pare[\Big]{5 + \frac{2^{p+3}}{2-p}} \, .
\]
Proceeding as before, the lower bound follows.

\smallskip To end the proof of the lower
bound, it remains to prove inequality~\eqref{bornesuppresquefinale}. With
this aim, setting
 \[
Z_0= T_0 X_0 \ \text{ and } Z_k = \tau_{k-1} X_{T_{k-1}} \text{ for $k \geq 1$}
\]
we note that
\[
 \max_{0 \leq k \leq n-1} \tau_k \abs{X_{T_k}} \leq \max_{0 \leq k \leq n} \abs{Z_k} \, .
\]
But for any $k \geq 1$, $Z_k = \sum_{i=0}^{k} Z_i - \sum_{i=0}^{k-1} Z_i $.
Therefore
\[
 \max_{0 \leq k \leq n} \abs{Z_k} \leq 2 \max_{0 \leq k \leq n} \abs[\Big]{\sum_{i=0}^{k} Z_i} \, .
\]
The above considerations imply that
\[
\p\pare[\Big]{\max_{0 \leq k \leq n-1} \tau_k \abs{X_{T_k}} \geq24 x } \leq \p \pare[\Big]{\max_{0 \leq k \leq n} \abs[\Big]{\sum_{i=0}^{k} Z_i}
 \geq 12 x } \, . \]
$(Z_k)_{k \geq 0}$ being a sequence of independent random variables,
Etemadi's inequality entails that
\[
 \p \pare[\Big]{\max_{0 \leq k \leq n} \abs[\Big]{\sum_{i=0}^{k} Z_i}
 \geq 12x} \leq 3 \p \pare[\Big]{\abs[\Big]{\sum_{i=0}^{n} Z_i}
 \geq  4x } \, .
\]
Note now that
\begin{align*}
 \sum_{i=0}^{n} Z_i = \sum_{k=0}^{T_0-1} X_0 + \sum_{i=1}^{n} (T_{i} - T_{i-1} ) X_{T_{i-1}} = \sum_{k=0}^{T_0-1} X_k + \sum_{i=1}^{n} \sum_{j=T_{i-1}}^{T_{i} -1} X_{j} = \sum_{k=0}^{T_{n} -1} X_k \, .
\end{align*}
Therefore
\[
 \p \pare[\Big]{\abs[\Big]{\sum_{i=0}^{n} Z_i}
 \geq 4 x }
  \leq \p\pare[\Big]{\abs[\Big]{\sum_{i=0}^{[n \E(\tau_1)] -1} X_{i}} \geq 2 x }
  + \p\pare[\Big]{\abs{S_{T_{n}} - S_{[n \E(\tau_1)]}} \geq 2 x } \, .
\]
Inequality~\eqref{bornesuppresquefinale} follows from all the considerations
above, together with the fact that
\[
  \p\pare[\Big]{\abs{S_{T_{n}} - S_{[n \E(\tau_1)]}} \geq  2 x }
  \leq 2 \p\pare[\Big]{\max_{1 \leq k \leq [n\E (\tau_1) ] +1} \abs{S_{k}} \geq  x }
    + \p(T_n \geq 2 [n\E (\tau_1) ] +1 ) \, .
  \qedhere
\]
\end{proof}

To complete this section, it remains to show that the transition operator $K$
of the Markov chain satisfies condition \Hdeux for the semi norm $\abs{\dd
f}$. With this aim, we first note that
\[
K(f)(x) = (1-x) f(x) + x \nu (f) \, .
\]
So iterating, we get for any positive integer $n$,
\[
K^n f(x) = (1-x)^n f(x) + \sum_{k=0}^{n-1} x (1-x)^k \nu (K^{n-1-k} (f)) \, .
\]
Therefore, we infer that
\begin{multline*}
  K^n f(x) = (1-x)^n (f(x) - \nu(f) ) + \nu (K^{n-1} (f)) \\
  + \sum_{k=1}^{n-1} (1-x)^{n-k} \pare[\big]{\nu (K^{k-1}(f) ) -\nu (K^{k} (f) )} \, .
\end{multline*}
It follows that
\begin{equation}\label{decomp}
\abs{\dd K^n (f)} \leq 3 \abs{\dd f} + \sum_{k=1}^{n-1} \abs{\nu (K^{k-1} (f) ) -\nu (K^{k} (f) )} \, .
\end{equation}
Setting $g_0 = f - f(0)$, note now that, for any positive integer $k$,
\[
 \abs{\nu (K^{k-1} (f) ) -\nu (K^{k} (f) )} = \abs{\nu (K^{k-1} (g_0) ) - \nu (K^{k}(g_0) )} \, .
\]
Therefore
\[
 \sum_{k=1}^{n-1} \abs{\nu (K^{k-1} (f) ) -\nu (K^{k} (f) )}
 \leq \sum_{k=1}^{n-1} \int_0^1 \abs{K^{k-1} (g_0) (x)- K^{k} (g_0) (x)} \dd\nu (x) \, .
\]
But $ \sup_{x \in [0, 1]} \abs{g_0 (x)} \leq \abs{\dd f}$. Hence
\begin{equation}\label{decomp2}
\begin{split}
  \int_0^1 \abs{K^{k-1} (g_0) (x)- K^{k} (g_0) (x)} \dd\nu (x) & = \int_0^1 \abs{(\delta_x K^{k-1} - \delta_x K^{k} ) (g_0)} \dd\nu (x)
  \\ & \leq \abs{\dd f} \int_0^1 \abs{\delta_x K^{k-1} - \delta_x K^{k}} \dd\nu (x) \, .
\end{split}
\end{equation}
From~\eqref{decomp} and~\eqref{decomp2}, to complete the proof of the fact
that $K$ satisfies \Hdeux, it remains to show that
\begin{equation} \label{mainaimH2K}
 \sum_{k\geq 1} \int_0^1 \abs{\delta_x K^{k-1} - \delta_x K^{k}} \dd\nu (x) < \infty \, .
\end{equation}
Set $T(x) = 1-x$. According to the computations leading to the first
inequality on page 76 of~\cite{Doukhan_Massart_Rio}, we have, for any integer
$k \geq 2$
\[
\abs{\delta_x K^{k-1} - \delta_x K^{k}} \leq 2 (T(x))^{k-1} + \sum_{i=1}^{k-1} (1-T(x) ) (T(x))^{i-1} \abs{\nu K^{k-1-i} - \nu K^{k-i}} \, ,
\]
implying that
\begin{equation} \label{1rstboundvariation}
 \int_0^1 \abs{\delta_x K^{k-1} - \delta_x K^{k}} \dd\nu (x) \leq 2 \E_{\nu}(T^{k-1}) + \sum_{i=1}^{k-1} \E_{\nu} ((1-T) T^{i-1} ) \abs{\nu K^{k-1-i} - \nu K^{k-i}} \, .
\end{equation}
But, by taking into account~\eqref{majorationevidentegamme}, we get
\begin{equation}
\label{1rstboundvariation2}
 \E_{\nu}(T^k) = (1+a) \int_0^1 (1-x)^k x^a \dd x \leq k^{-(a+1)} (a+1) \Gamma (a+1)
\end{equation}
and, for any integer $i \geq 2$,
\begin{equation} 
  \label{1rstboundvariation2bis}
  \E_{\nu} ((1-T) T^{i-1} ) = (1+a) \int_0^1 (1-x)^{i-1} x^{a+1} \dd x \leq
  (i-1)^{- (a +2)} (a+1) \Gamma (a+2) \, .
\end{equation}
We need now to give an upper bound of $ \abs{\nu K^{j} - \nu K^{j+1}} $ for
any non negative integer $j$. With this aim, we first notice that
\[
 K^{j} (f) - K^{j+1} (f) = s K^{j} (f) - s \nu (K^{j} (f) )
\]
where $s(x) = x$. Therefore setting $ \displaystyle \mu = \frac{s(x)}{\nu(s)}
\nu$, we have
\[
 \nu K^{j} - \nu K^{j+1} = \nu (s) \pare[\big]{\mu K^{j} - \nu K^{j}} \, .
\]
Taking into account the relation~(9.11) in~\cite{rio_book}, this gives
\begin{equation} \label{1rstboundvariation3}
(\nu (s) )^{-1} (\nu K^{j} - \nu K^{j+1} ) =
\sum_{\ell=1}^j a_{\ell} \nu Q^{j-\ell} + \mu Q^j - \nu Q^j \, .
\end{equation}
where
\[
Q (x,A) = K(x,A) - s (x) \nu (A) = T (x) \delta_x (A) \ \text{ and } \ a_{\ell} = \mu K^{\ell-1} (s) - \nu K^{\ell -1} (s) \, .
\]
If we can prove that for any positive integer $\ell$, $a_\ell$ is non
negative, the relation~\eqref{1rstboundvariation3} will imply that the
signed measures $ \nu K^{j} - \nu K^{j+1} $ of null mass can be rewritten as
the differences of two positive measures with finite mass (the second one
being equal to $ \nu Q^j $), and therefore we will have
\begin{equation} \label{1rstboundvariation4}
 \abs{\nu K^{j} - \nu K^{j+1}} \leq 2 \nu (s) \nu Q^j ({\bf 1}) = 2 \nu (s) \E_{\nu} (T^j) \, .
\end{equation}
Hence, starting from~\eqref{1rstboundvariation} and taking into
account~\eqref{1rstboundvariation2},~\eqref{1rstboundvariation2bis}
and~\eqref{1rstboundvariation4}, we will get that for any integer $k \geq 2$,
\begin{equation} \label{1rstboundvariation5}
 \int_0^1 \abs{\delta_x K^{k-1} - \delta_x K^{k}} \dd\nu (x)
 \leq C_a \pare[\Big]{\frac{1}{k^{a+1}} + \sum_{i=1}^{k-1} \frac{1}{i^{a +2}} \times \frac{1}{(k-i)^{a+1}}}
 \leq {\tilde C}_a \frac{1}{k^{a+1}} \, ,
\end{equation}
provided that one can prove that, for any positive integer $\ell$, $a_\ell$
is non negative. This can been proved by using~\eqref{1rstboundvariation3}
and  the arguments developed in the proof of Lemma~9.3 in~\cite{rio_book}. We
complete the proof by noticing that~\eqref{1rstboundvariation5}
implies~\eqref{mainaimH2K} since $a > 0$.

\bibliography{biblio}
\bibliographystyle{amsalpha}

\end{document}